\documentclass[a4paper,english]{amsart}

\usepackage[T1]{fontenc}
\usepackage[utf8]{inputenc}
\usepackage{babel}
\usepackage{prettyref}
\usepackage{amsthm}
\usepackage{amssymb}
\usepackage[unicode=true,pdfusetitle,
 bookmarks=true,bookmarksnumbered=false,bookmarksopen=false,
 breaklinks=false,pdfborder={0 0 0},pdfborderstyle={},backref=false,colorlinks=false]
 {hyperref}

\makeatletter

\pdfpageheight\paperheight
\pdfpagewidth\paperwidth

\numberwithin{equation}{section}
\numberwithin{figure}{section}
  \theoremstyle{remark}
  \newtheorem*{acknowledgement*}{\protect\acknowledgementname}
\theoremstyle{plain}
\newtheorem{thm}{\protect\theoremname}[section]
  \theoremstyle{plain}
  \newtheorem{lem}[thm]{\protect\lemmaname}
  \theoremstyle{plain}
  \newtheorem{prop}[thm]{\protect\propositionname}
  \theoremstyle{definition}
  \newtheorem{defn}[thm]{\protect\definitionname}
  \theoremstyle{plain}
  \newtheorem{fact}[thm]{\protect\factname}
  \theoremstyle{remark}
  \newtheorem{rem}[thm]{\protect\remarkname}
  \theoremstyle{plain}
  \newtheorem{cor}[thm]{\protect\corollaryname}
  \theoremstyle{plain}
  \newtheorem{assumption}[thm]{\protect\assumptionname}

\usepackage{lmodern}
\usepackage[utf8]{inputenc}
\usepackage[all]{xy}

\newrefformat{rem}{Remark~\ref{#1}}
\newrefformat{assu}{Assumption~\ref{#1}}
\newrefformat{prop}{Proposition~\ref{#1}}
\newrefformat{lem}{Lemma~\ref{#1}}
\newrefformat{cor}{Corollary~\ref{#1}}
\newrefformat{fact}{Fact~\ref{#1}}
\newrefformat{def}{Definition~\ref{#1}}

\makeatother

  \providecommand{\acknowledgementname}{Acknowledgement}
  \providecommand{\assumptionname}{Assumption}
  \providecommand{\corollaryname}{Corollary}
  \providecommand{\definitionname}{Definition}
  \providecommand{\factname}{Fact}
  \providecommand{\lemmaname}{Lemma}
  \providecommand{\propositionname}{Proposition}
  \providecommand{\remarkname}{Remark}
\providecommand{\theoremname}{Theorem}

\begin{document}
\global\long\def\opname#1{\operatorname{#1}}

\global\long\def\C{\mathbb{C}}

\global\long\def\K{\mathbb{\mathbb{K}}}

\global\long\def\N{\mathbb{N}}

\global\long\def\Q{\mathbb{Q}}

\global\long\def\R{\mathbb{R}}

\global\long\def\Ralg{\mathbb{R^{\mathrm{alg}}}}

\global\long\def\T{\mathbb{T}}
\global\long\def\hA{\text{(A)}}
\global\long\def\hB{\text{(B)}}
\global\long\def\hC{\text{(C)}}

\global\long\def\Z{\mathbb{Z}}

\global\long\def\M{\mathfrak{M}}

\global\long\def\acl{\opname{acl}}

\global\long\def\dcl{\opname{dcl}}

\global\long\def\dom{\opname{dom}}

\global\long\def\img{\opname{Im}}

\global\long\def\bd#1{\delta#1}

\global\long\def\df{\opname{def}}

\global\long\def\interior{\opname{int}}

\global\long\def\cl{\opname{cl}}

\global\long\def\U{\mathcal{U}}

\global\long\def\V{\mathcal{V}}

\global\long\def\W{\mathcal{W}}

\global\long\def\Aut{\opname{Aut}}

\global\long\def\Inv{\opname{\M^{L}}}

\global\long\def\nin{\notin}

\global\long\def\eps{\varepsilon}

\global\long\def\h{\opname{{\bf h}}}

\global\long\def\ov#1{\overline{#1}}

\global\long\def\pr{\mathsf{p}}

\global\long\def\cal#1{\mathcal{\mathcal{\mathcal{{#1}}}}}

\global\long\def\monster{\mathcal{U}}

\global\long\def\suchthat{\,:\,}

\global\long\def\acl{\opname{acl}}

\global\long\def\Stab{\opname{Stab_{l}}}

\global\long\def\st{\opname{st}}

\global\long\def\St{\opname{St}}

\global\long\def\up#1{\pr^{-1}({#1})}

\global\long\def\pin{\pi_{n}}

\global\long\def\pind{\pi_{n}^{def}}

\global\long\def\pY{\pr^{-1}(O)}

\global\long\def\Y{O}

\title[A Vietoris mapping theorem for the homotopy of hyperdefinable sets]{A Vietoris-Smale mapping theorem for the homotopy of hyperdefinable
sets }

\author{Alessandro Achille}

\address{Computer Science department of UCLA, Vision Lab.}

\email{achille@cs.ucla.edu}

\author{Alessandro Berarducci}

\address{Università di Pisa, Dipartimento di Matematica, Largo Bruno Pontecorvo
5, 56127 Pisa, Italy}

\thanks{A.B. was partially supported by PRIN 2012 “Logica, Modelli e Insiemi”
and by Progetto di Ricerca d’Ateneo 2015 “Connessioni fra dinamica
olomorfa, teoria ergodica e logica matematica nei sistemi dinamici”.}

\email{alessandro.berarducci@unipi.it}

\date{3 June\,2017}

\subjclass[2000]{03C64, 03H05, 57T20, 55U10}

\keywords{o-minimality, homotopy, definable groups }
\begin{abstract}
Results of Smale (1957) and Dugundji (1969) allow to compare the homotopy
groups of two topological spaces $X$ and $Y$ whenever a map $f:X\to Y$
with strong connectivity conditions on the fibers is given. We apply
similar techniques in o-minimal expansions of fields to compare the
o-minimal homotopy of a definable set $X$ with the homotopy of some
of its bounded hyperdefinable quotients $X/E$. Under suitable assumption,
we show that $\pi_{n}(X)^{\df}\cong\pi_{n}(X/E)$ and $\dim(X)=\dim_{\R}(X/E)$.
As a special case, given a definably compact group, we obtain a new
proof of Pillay's group conjecture ``$\dim(G)=\dim_{\R}(G/G^{00}$)''
largely independent of the group structure of $G$. We also obtain
different proofs of various comparison results between classical and
o-minimal homotopy. 

\tableofcontents{}
\end{abstract}

\maketitle

\section{Introduction\label{sec:introduction}}

Let $M$ be a sufficiently saturated o-minimal expansion of a field.
We follow the usual convention in model theory \cite{Tent2012} to
work in a sufficiently saturated structure, so we assume that $M$
is $\kappa$-saturated and $\kappa$-strongly homogeneous for $\kappa$
a sufficiently big uncountable cardinal (this can always be achieved
going to an elementary extension). A set $X\subseteq M^{k}$ is \textbf{definable}
if it first-order definable with parameters from $M$, and it is \textbf{type-definable}
if it is the intersection of a \textbf{small} family of definable
sets, where ``small'' means ``of cardinality $<\kappa$''. The
dual notion of \textbf{$\bigvee$-definable} set is obtained by considering
unions instead of intersections. The hypothesis that $M$ has field
operations ensures that every definable set can be triangulated \cite{vdDries1998}. 

We recall that, given a definable group $G$, there is a normal type-definable
subgroup $G^{00}$, called \textbf{infinitesimal subgroup}, such that
$G/G^{00}$, with the \textbf{logic topology} \cite{Pillay2004a},
is a real Lie group \cite{Berarducci2005}. If in addition $G$ is
\textbf{definably compact} \cite{Peterzil1999}, we have $\dim(G)=\dim_{\R}(G/G^{00})$
\cite{Hrushovski2007a}, namely the o-minimal dimension of $G$ equals
the dimension of $G/G^{00}$ as a real Lie group. These results were
conjectured in \cite{Pillay2004a} and are still known as \textbf{Pillay's
conjectures}.

It was later proved that if $G$ is definably compact, then $G$ is
\textbf{compactly dominated }by $G/G^{00}$ \cite{Hrushovski2011a}.
This means that for every definable subset $D$ of $G$, the intersection
$\pr(D)\cap\pr(D^{\complement})$ has Haar measure zero (hence in
particular it has empty interior) where $\pr:G\to G/G^{00}$ is the
projection and $D^{\complement}$ is the complement of $D$. Special
cases were proved in \cite{Berarducci2004b} and \cite{Peterzil2007b}. 

The above results establish strong connections between definable groups
and real Lie groups. The proofs are complex and based on a reduction
to the abelian and semisimple cases, with the abelian case depending
in turn on the study of the fundamental group and on the counting
of torsion points \cite{Edmundo2004}. A series of result of P.~Simon
\cite{Simon2015,Simon2014a,Simon2013} provides however a new proof
of compact domination which does not rely on Pillay's conjectures
or the results of \cite{Edmundo2004}. More precisely, \cite{Simon2014a}
shows that fsg groups in o-minimal theories admit a smooth left-invariant
measure, and  \cite{Simon2015} contains a proof of compact domination
for definable groups admitting a smooth measure (even in a NIP context).
The fact that definably compact groups in o-minimal structures are
fsg is proved in \cite[Thm.\ 8.1]{Hrushovski2007a}. 

Our main theorem sheds new light on the connections between compact
domination and Pillay's conjectures, and concerns the topology of
certain hyperdefinable sets $X/E$, where $E$ is a bounded type-definable
equivalence relation on a definable set $X$. Under a suitable contractibility
assumption on the fibers of $\pr:X\to X/E$ (\ref{assumptionB}),
we obtain a homotopy comparison result between $X$ and $X/E$, and
in particular an isomorphism of homotopy groups 
\[
\pi_{n}(X)^{\df}\cong\pi_{n}(X/E)
\]
in the respective categories. Similar results apply locally, namely
replacing $X/E$ with an open subset $U\subseteq X/E$ and $X$ with
its preimage $\pr^{-1}(U)\subseteq X$, thus obtaining 
\[
\pi_{n}(\pr^{-1}(U))^{\df}\cong\pi_{n}(U).
\]
For the full result see \prettyref{thm:A} and \prettyref{thm:B}. 

From these local results and a form of ``topological compact domination''
(\ref{assumptionC}) we shall deduce that 
\[
\dim(X)=\dim_{\R}(X/E),
\]
namely the dimension of $X$ in the definable category equals the
dimension of $X/E$ in the topological category (\prettyref{thm:C}).
This yields a new proof of ``$\dim(G)=\dim_{\R}(G/G^{00})$'' for
compactly dominated groups which does not depend on the counting of
torsion points (for in fact it does not depend on the group structure!). 

Some comparison results between classical and o-minimal homotopy estabished
in \cite{Delfs1985,Berarducci2002c,Baro2009a} also follow (see \prettyref{cor:baro-otero}).
In particular, if $X=X(M)\subseteq M^{k}$ is a closed and bounded
$\emptyset$-semialgebraic and $\st:X\to X(\R)$ is the standard part
map, we can take $E=\ker(\st)$ and deduce
\[
\pi_{n}(X)^{\df}\cong\pi_{n}(X(\R)).
\]
This work can be considered as a continuation of the line of research
initiated \cite{Berarducci2011}: while in that paper we focused on
the fundamental group, here we manage to encompass the higher homotopy
groups and more generally homotopy classes $[X,Y]$ of map $f:X\to Y$
is the relevant categories. 

We have tried to make this paper as self-contained as possible. The
proofs of the homotopy results are somewhat long but elementary and
all the relevant notions are recalled as needed. The paper is organized
as follows. 

In Section 2 we recall the notions of definable space and definable
manifold, the main example being a definable group $G$. 

In Section 3 we introduce the logic topology on the quotient $X/E$
of a definable set $X$ by a bounded type-definable equivalence $E$. 

In Section 4 we recall the notion of ``normal triangulation'' due
to Baro \cite{Baro2010a}, and we show how to produce normal triangulations
satisfying some additional properties. 

In sections 5 and 6 we illustrate some of the analogies between the
standard part map and the map $G\to G/G^{00}$, where $G$ is a definably
compact group and $G/G^{00}$ has the logic topology. 

These analogies are further developed in Section 7, where we discuss
various versions of ``compact domination''. 

In Sections 8,9 we work in the category of classical topological spaces
and we establish a few results for which we could not find a suitable
reference. In particular in Section 8 we show that given an open subset
$U$ of a a triangulable space, any open covering of $U$ has a refinement
which is a good cover. 

In Section 10 we recall the definition of definable homotopy. 

Sections 11,12 and 13 contain the main results of the paper, labeled
Theorem A (\ref{thm:A}), Theorem B (\ref{thm:B}), and Theorem C
(\ref{thm:C}), respectively, as the titles of the corresponding sections. 

In Theorem A we prove that there is a homomorphism $\pi_{n}^{\df}(X)\to\pi_{n}(X/E)$
from the definable homotopy groups of $X$ and the homotopy groups
of $X/E$, under a suitable assumption on $E$. We actually obtain
a more general result of which this is a special case. 

In Theorem B we strengthen the assumptions to obtain an isomorphism:
$\pi_{n}^{\df}(X)\cong\pi_{n}(X/E)$. Since the standard part map
can put in the form $\pr:X\to X/E$ for a suitable $E$, some known
comparison results between classical and o-minimal homotopy will follow. 

Finally, in Theorem C we add the assumption of ``topological compact
domination'' to obtain $\dim(X)=\dim_{\R}(X/E)$ and we deduce $\dim(G)=\dim_{\R}(G/G^{00})$
and some related results. 
\begin{acknowledgement*}
Some of the results of this paper were presented at the 7th meeting
of the Lancashire Yorkshire Model Theory Seminar, held on December
the 5th 2015 in Preston. A.B.~wants to thank the organizers of the
meeting and acknowledge support from the Leverhulme Trust (VP2-2013-055)
during his visit to the UK. The results were also presented at the
Thematic Program On Model Theory, International Conference, June 20-24,
2016, University of Notre Dame. 
\end{acknowledgement*}

\section{Definable spaces}

A fundamental result in \cite{Pillay1988} establishes that every
definable group $G$ in $M$ has a unique group topology, called \textbf{t-topology},
making it into a a \textbf{definable manifold}. This means that $G$
has a finite cover $U_{1},\ldots,U_{m}$ by t-open sets and for each
$i\leq m$ there is a definable homeomorphism $g_{i}:U_{i}\to U_{i}'$
where $U_{i}'$ is an open subset of some cartesian power $M^{k}$
with the topology induced by the order of $M$. The collection $(g_{i}:U_{i}\to X_{i})_{i\leq m}$
is called an atlas and $g_{i}$ is called a local chart. 

Definable manifolds are special cases of \textbf{definable spaces}
\cite{vdDries1998}. The notion of definable spaces is defined through
local charts $g_{i}:U_{i}\to U_{i}'$, like definable manifolds, with
the difference that now $U_{i}'$ is an arbitrary definable subset
of $M^{k}$, not necessarily open. In particular every definable subset
$X$ of $M^{k}$, with the topology induced by the order, is a definable
space (with the trivial atlas consisting of a single local chart),
but not necessarily a definable manifold. 

We collect in this section a few results on definable spaces which
shall be needed in the sequel. They depend on the saturation assumptions
on $M$. The results are easy and well known to the experts but the
proofs are somewhat dispersed in the literature. 
\begin{lem}
\label{lem:intersection-open}Let $(A_{i}\suchthat i\in I)$ be a
small downward directed family of definable open subsets of a definable
space $X$ (where ``small'' means $|I|<\kappa$). Then $\bigcap_{i\in I}A_{i}$
is open. 
\end{lem}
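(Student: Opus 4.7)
The plan is to fix $x \in \bigcap_{i \in I} A_{i}$ and produce a definable open neighborhood of $x$ contained in the whole intersection. Since openness in a definable space is a local property defined through its atlas, I may pass to a chart $g \colon U \to U' \subseteq M^{k}$ with $x \in U$, set $x' := g(x)$ and $A_{i}' := g(A_{i} \cap U)$ (open in $U'$ by hypothesis on the topology of a definable space), and instead look for a single open box $B \subseteq M^{k}$ around $x'$ with $B \cap U' \subseteq A_{i}'$ for every $i \in I$; pulling back through $g$ then yields the required neighborhood of $x$.

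The standard move in a $\kappa$-saturated setting, exploiting that $I$ is small, is to realize this $B$ via a partial type in its radius. Writing $B(x',r) := \{\, y \in M^{k} \suchthat |y_{j} - x'_{j}| < r,\ j = 1,\ldots,k \,\}$, each condition ``$B(x',r) \cap U' \subseteq A_{i}'$'' is first-order in the single variable $r$, over the parameters defining $x'$, $U'$, and $A_{i}'$. I consider
\[
p(r) \;=\; \{\, r > 0 \,\} \;\cup\; \bigl\{\, B(x', r) \cap U' \subseteq A_{i}' \;:\; i \in I \,\bigr\},
\]
a partial type over a parameter set of size $<\kappa$, since $|I| < \kappa$ and each $A_{i}$ is itself defined over finitely many parameters.

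Finite satisfiability of $p$ is precisely where downward directedness enters: given a finite $F \subseteq I$, pick $i_{0} \in I$ with $A_{i_{0}} \subseteq \bigcap_{i \in F} A_{i}$; since $A_{i_{0}}'$ is open in $U'$ and contains $x'$, some $r_{0} > 0$ witnesses $B(x', r_{0}) \cap U' \subseteq A_{i_{0}}' \subseteq A_{i}'$ for every $i \in F$. By $\kappa$-saturation, $p(r)$ is realized by some $r_{*} > 0$ in $M$, and then $g^{-1}(B(x', r_{*}) \cap U')$ is a definable open neighborhood of $x$ contained in $\bigcap_{i \in I} A_{i}$, as required. I do not anticipate any real obstacle here: the argument is a standard compactness/saturation trick, and the only point demanding care is keeping the parameter set small, which is immediate from the smallness of $I$ together with the convention that every ``definable'' set is definable over finitely many parameters from $M$.
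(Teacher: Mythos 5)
Your proof is correct and is essentially the paper's argument: fix a definable fundamental system of neighbourhoods of $x$ (via a chart), and use $\kappa$-saturation to realize a small partial type in the radius variable whose realization yields a single neighbourhood inside every $A_i$. One small remark: your appeal to downward directedness for finite satisfiability is unnecessary, since for a finite $F\subseteq I$ one can simply take the minimum of finitely many witnessing radii $r_i$; indeed the paper's proof does not invoke directedness at all.
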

\begin{proof}
Let $x\in\bigcap_{i\in I}A_{i}$ and fix a definable fundamental family
$(B_{\eps}\suchthat\eps>0)$ of neighbourhoods of $x$ decreasing
with $\eps$ (for example take $B_{\eps}$ to be the points of $X$
at distance $<\varepsilon$ from $x$ in a local chart). Since $A_{i}$
is open in $X$, there is $\eps_{i}>0$ such that $B_{\eps_{i}}\subseteq A_{i}$.
By saturation, we can find an $\eps>0$ in $M$ such that $\eps<\eps_{i}$
for each $i\in I$. It follows that $B_{\eps}\subseteq\bigcap_{i\in I}A_{i}$,
so $x$ is in the interior of the intersection. 
\end{proof}
\begin{lem}
\label{lem:intersection-closures}Let $(X_{i}\suchthat i\in I)$ be
a small downward directed family of definable subsets of a definable
space. Then $\ov{\bigcap_{i\in I}X_{i}}=\bigcap_{i\in I}\ov{X_{i}}$. 
\end{lem}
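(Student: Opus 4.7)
The inclusion $\overline{\bigcap_{i\in I}X_{i}}\subseteq\bigcap_{i\in I}\overline{X_{i}}$ is formal and holds without any directedness or saturation hypothesis: any point in the closure of $\bigcap_{i}X_{i}$ has every neighbourhood meeting $\bigcap_{i}X_{i}$, hence meeting each individual $X_{i}$, so it lies in every $\overline{X_{i}}$. I would dispatch this inclusion in one sentence.

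The interesting direction is $\bigcap_{i\in I}\overline{X_{i}}\subseteq\overline{\bigcap_{i\in I}X_{i}}$, and this is where the ``small'' and ``downward directed'' assumptions, together with saturation, will be used. Fix $x\in\bigcap_{i\in I}\overline{X_{i}}$. Exactly as in the proof of \prettyref{lem:intersection-open}, choose a definable fundamental family $(B_{\eps}\suchthat\eps>0)$ of neighbourhoods of $x$ (e.g.\ balls of radius $\eps$ in a local chart around $x$). To show $x\in\overline{\bigcap_{i}X_{i}}$ it suffices to show $B_{\eps}\cap\bigcap_{i\in I}X_{i}\neq\emptyset$ for every $\eps>0$, since any neighbourhood of $x$ contains some $B_{\eps}$.

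Fix such an $\eps$. For each $i\in I$, the set $B_{\eps}\cap X_{i}$ is definable, and it is nonempty because $x\in\overline{X_{i}}$ and $B_{\eps}$ is a neighbourhood of $x$. The family $(B_{\eps}\cap X_{i}\suchthat i\in I)$ inherits downward directedness from $(X_{i}\suchthat i\in I)$, hence it has the finite intersection property. It is also small since $|I|<\kappa$. By $\kappa$-saturation of $M$, a small family of definable sets with the finite intersection property has nonempty intersection, so
\[
\bigcap_{i\in I}(B_{\eps}\cap X_{i})\;=\;B_{\eps}\cap\bigcap_{i\in I}X_{i}\;\neq\;\emptyset,
\]
as required.

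The only mild subtlety — and the point I would flag to the reader — is the legitimacy of applying saturation inside an arbitrary definable space rather than in $M^{k}$: the balls $B_{\eps}$ have been taken in a single local chart, so $B_{\eps}\cap X_{i}$ really is a definable subset of $M^{k}$ and the standard saturation argument applies. I do not expect any other obstacles; the argument is essentially the saturation-based observation that in a sufficiently saturated structure, small directed intersections of definable sets commute with closure, and all topological content is absorbed into the choice of the fundamental family $(B_{\eps})$.
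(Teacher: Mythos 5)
Your argument is correct and is essentially the paper's proof in contrapositive form: both reduce to the saturation principle that a small downward-directed family of nonempty definable sets has nonempty intersection, applied after intersecting with a definable neighbourhood of $x$. You are slightly more explicit than the paper in taking the neighbourhood $B_{\eps}$ from a definable fundamental system in a local chart (so that saturation actually applies), which is a worthwhile precision but not a different method.
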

\begin{proof}
The inclusion ``$\subseteq$'' is trivial. For the ``$\supseteq$''
direction let $x\in\bigcap_{i}\ov X_{i}$ and suppose for a contradiction
that $x\nin\ov{\bigcap_{i\in I}X_{i}}$. Then there is an open neighbourhood
$U$ of $x$ disjoint from $\bigcap_{i\in I}X_{i}$. By saturation
there is $i\in I$ such that $U$ is disjoint from $X_{i}$, hence
$x\nin\ov X_{i}$, a contradiction.
\end{proof}
\begin{lem}
\label{lem:closure-interior}Let $(X_{i}\suchthat i\in I)$ be a small
downward directed family of definable subsets of the definable space
$X$. Suppose that $H:=\bigcap_{i\in I}X_{i}$ is clopen. Then for
every $i\in I$ there is $j\in I$ such that $\overline{X_{j}}\subseteq\interior(X_{i})$. 
\end{lem}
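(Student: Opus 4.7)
The plan is to fix $i_0 \in I$, let $C := X \setminus \interior(X_{i_0})$ (a definable closed subset of $X$), and show that the downward directed family of definable sets $(\overline{X_i} \cap C \suchthat i \in I)$ has empty intersection. Once that is established, saturation together with downward directedness produces a single $j$ for which $\overline{X_j} \cap C = \emptyset$, which is exactly the conclusion $\overline{X_j} \subseteq \interior(X_{i_0})$.

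To see that the intersection is empty, note that since $H$ is open and $H \subseteq X_{i_0}$, every point of $H$ is an interior point of $X_{i_0}$, so $H \subseteq \interior(X_{i_0})$ and $H \cap C = \emptyset$. On the other hand, since $H$ is closed, $H = \overline{H}$, and by \prettyref{lem:intersection-closures} applied to the downward directed family $(X_i)$ we get
\[
H \;=\; \overline{\bigcap_{i \in I} X_i} \;=\; \bigcap_{i \in I} \overline{X_i}.
\]
Combining these two facts,
\[
\bigcap_{i \in I} \bigl( \overline{X_i} \cap C \bigr) \;=\; H \cap C \;=\; \emptyset.
\]

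Now apply the saturation of $M$: since $(\overline{X_i} \cap C)_{i \in I}$ is a small family of definable sets with empty intersection, it cannot have the finite intersection property, so some finite subfamily already has empty intersection. Because $(X_i)_{i \in I}$ is downward directed, we can pick $j \in I$ with $X_j$ contained in each member of this finite subfamily; then $\overline{X_j}$ is contained in each of the corresponding closures, and hence $\overline{X_j} \cap C = \emptyset$, i.e.\ $\overline{X_j} \subseteq \interior(X_{i_0})$.

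The only genuinely delicate point is that the sets $\overline{X_i}$ and $\interior(X_{i_0})$ are definable (so that saturation applies to a family of \emph{definable} sets); this is standard in a definable space, since in a local chart closure and interior are first-order expressible using the order on $M$. Apart from this routine verification, the argument is purely a compactness argument on top of \prettyref{lem:intersection-closures}, so I do not expect any further obstacle.
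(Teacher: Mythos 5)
Your proof is correct and takes essentially the same route as the paper: both use \prettyref{lem:intersection-closures} to rewrite $H = \bigcap_i \overline{X_i}$, note that $H \subseteq \interior(X_{i_0})$ since $H$ is open, and then invoke saturation together with downward directedness to pass to a single index $j$. Your version just unfolds the saturation step (finite subfamily, then directedness) that the paper leaves implicit, and makes explicit the routine point that $\overline{X_i}$ and $\interior(X_{i_0})$ are definable.
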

\begin{proof}
Fix $i\in I$. Since $H$ is open, $H\subseteq\interior(X_{i})$.
Using the fact that $H$ is also closed, we have $H=\ov H=\ov{\bigcap_{i\in I}X_{i}}=\bigcap_{i\in I}\ov{X_{i}}$
(by \prettyref{lem:intersection-closures}). The latter intersection
is included in $\interior(X_{i})$, hence by saturation there is $j\in I$
such that $\ov{X_{j}}\subseteq\interior(X_{i})$. 
\end{proof}

\section{Logic topology}

Let $X$ be a definable set and consider a type-definable equivalence
relation $E\subseteq X\times X$ of \textbf{bounded index} (namely
of index $<\kappa$) and put on $X/E$ the \textbf{logic topology}:
a subset $\Y\subseteq X/E$ is open if and only if its preimage in
$X$ is $\bigvee$-definable, or equivalently $C\subseteq X/E$ is
closed if and only if its preimage in $X$ is type-definable. This
makes $X/E$ into a compact Hausdorff space \cite{Pillay2004a}. We
collect here a few basic results, including some results from \cite{Pillay2004a,Berarducci2005},
which shall be needed later. 
\begin{prop}
\label{prop:image-definable-closed}For every definable set $C\subseteq X$,
$\pr(C)$ is closed in $X/E$. 
\end{prop}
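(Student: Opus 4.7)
The plan is to unravel the definition of the logic topology: $\pr(C)$ is closed in $X/E$ if and only if the $E$-saturation $\pr^{-1}(\pr(C)) = \{x \in X \suchthat \exists c \in C,\ (x,c) \in E\}$ is type-definable in $X$. So the task reduces to exhibiting this saturation as a small intersection of definable sets.

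Write $E = \bigcap_{i \in I} E_i$ where $(E_i)_{i \in I}$ is a small family of definable subsets of $X \times X$; by closing under finite intersections (which keeps the family small since $|I| < \kappa$) I may assume it is downward directed. For each $i \in I$ set
\[
A_i := \{x \in X \suchthat \exists c \in C,\ (x,c) \in E_i\}.
\]
Each $A_i$ is definable (it is the projection of the definable set $E_i \cap (X \times C)$ onto the first coordinate), and the family $(A_i)_{i \in I}$ is downward directed. Clearly $\pr^{-1}(\pr(C)) \subseteq \bigcap_{i \in I} A_i$.

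The key step is the reverse inclusion, which is where the saturation hypothesis on $M$ is used. Fix $x \in \bigcap_i A_i$; I must produce a single $c \in C$ with $(x,c) \in E_i$ for every $i$. Consider the partial type
\[
p(y) \;=\; \{y \in C\} \cup \{(x,y) \in E_i \suchthat i \in I\}
\]
over the small parameter set consisting of $x$ together with the parameters defining $C$ and the $E_i$'s. Every finite subset of $p(y)$ is satisfiable: given finitely many indices $i_1,\ldots,i_n$, downward directedness gives some $j$ with $E_j \subseteq E_{i_1} \cap \cdots \cap E_{i_n}$, and by hypothesis there exists $c \in C$ with $(x,c) \in E_j$. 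Since $M$ is $\kappa$-saturated and the parameter set has size $<\kappa$, the type $p$ is realized by some $c \in C$, and this $c$ witnesses $x \in \pr^{-1}(\pr(C))$.

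Thus $\pr^{-1}(\pr(C)) = \bigcap_{i \in I} A_i$ is type-definable, so $\pr(C)$ is closed. The only real obstacle is the reverse inclusion, and it dissolves immediately once the right partial type is identified; the rest is just bookkeeping with the definition of type-definability and the logic topology.
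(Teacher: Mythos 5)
Your proof is correct and follows essentially the same route as the paper's: both reduce to showing $\pr^{-1}(\pr(C))$ is type-definable, write $E$ as a downward-directed small intersection of definable sets $E_i$, and use saturation to exchange the existential quantifier over $C$ with the small conjunction. Your version simply spells out the compactness argument as realizing the partial type $p(y)$, which the paper compresses into the single phrase ``by saturation it follows that we can exchange $\exists$ and $\bigwedge_i$.''
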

\begin{proof}
By definition of logic topology, we need to show that $\pr^{-1}(\pr(C))$
is type-definable. By definition, $x$ belongs to $\pr^{-1}(\pr(C))$
if and only if $\exists y\in C\::\:xEy$. Since $E$ is type-definable,
$xEy$ is equivalent to a possibly infinite conjunction $\bigwedge_{i\in I}\varphi_{i}(x,y)$
of formulas over some small index set $I$, and we can assume that
every finite conjunction of the formulas $\varphi_{i}$ is implied
by a single $\varphi_{i}$. By saturation it follows that we can exchange
$\exists$ and $\bigwedge_{i}$, hence $\pr^{-1}(\pr(C))=\{x\suchthat\bigwedge_{i}\exists y\varphi_{i}(x,y)\}$,
a type-definable set. 
\end{proof}
\begin{prop}
Let $C\subseteq U\subseteq X/E$ with $U$ open and $C$ closed. Then
there is a definable set $D\subseteq X$ such that $\pr^{-1}(C)\subseteq D\subseteq\pr^{-1}(U)$. 
\end{prop}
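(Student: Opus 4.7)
The plan is to exploit the two dual descriptions that come from the definition of the logic topology: since $C$ is closed, $\pr^{-1}(C)$ is type-definable, and since $U$ is open, the complement $X\setminus\pr^{-1}(U)$ is also type-definable (its image $X/E\setminus U$ is closed, so apply the defining property of closed sets). The whole statement should then fall out of a standard saturation (compactness) argument applied to the partial type asserting both that a point lies in $\pr^{-1}(C)$ and that it lies outside $\pr^{-1}(U)$.

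Concretely, I would first write $\pr^{-1}(C)=\bigcap_{i\in I}A_i$ and $X\setminus\pr^{-1}(U)=\bigcap_{j\in J}F_j$ with each $A_i,F_j$ definable and both index sets of size $<\kappa$. After replacing each family by the collection of its finite intersections, I can assume both are downward directed. The assumption $\pr^{-1}(C)\subseteq\pr^{-1}(U)$ translates into
\[
\bigcap_{i\in I}A_i\;\cap\;\bigcap_{j\in J}F_j\;=\;\emptyset,
\]
i.e.\ the partial type $\{x\in A_i\}_{i}\cup\{x\in F_j\}_{j}$ (over a small parameter set) is inconsistent.

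By $\kappa$-saturation of $M$ a finite fragment is already inconsistent, and because the families are downward directed this means there exist a single index $i\in I$ and a single index $j\in J$ with $A_i\cap F_j=\emptyset$. Setting $D:=A_i$ yields a definable set; clearly $\pr^{-1}(C)=\bigcap_{i'}A_{i'}\subseteq A_i=D$, while $A_i\cap F_j=\emptyset$ gives $D\subseteq X\setminus F_j$, and since $X\setminus\pr^{-1}(U)\subseteq F_j$ we conclude $D\subseteq\pr^{-1}(U)$, as required.

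There is no real obstacle here: the argument is a one-shot application of saturation, completely parallel in spirit to \prettyref{lem:intersection-open} and \prettyref{lem:intersection-closures} above. The only point that deserves a line of justification is the closure of the family of definable sets cutting out $X\setminus\pr^{-1}(U)$ under finite intersection, which is immediate.
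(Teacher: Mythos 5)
Your proof is correct and matches the paper's approach: the paper simply cites the standard fact that a type-definable set contained in a $\bigvee$-definable set admits a definable set in between, while you have unpacked that fact into the explicit saturation/compactness argument (passing to the complement of $\pr^{-1}(U)$, directing the two families of definable sets, and extracting a single separating pair $A_i\cap F_j=\emptyset$).
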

\begin{proof}
This is an immediate consequence of the fact that if a type-definable
set is contained in a $\bigvee$-definable set, there is a definable
set between them.
\end{proof}
\begin{prop}
\label{prop:interior} Let $y\in X/E$ and let $D\subseteq X$ be
a definable set containing $\pr^{-1}(y)$. Then $y$ is in the interior
of $\pr(D)$. Moreover, there is an open neighbourhood $U$ of $y$
such that $\pr^{-1}(U)\subseteq D$. 
\end{prop}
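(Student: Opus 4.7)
The plan is to apply the preceding proposition (\prettyref{prop:image-definable-closed}) not to $D$ but to its complement $A := X \setminus D$, which is again definable. That proposition will give that $\pr(A)$ is closed in $X/E$, so its complement $U := (X/E) \setminus \pr(A)$ is open. I claim $U$ is the sought-after neighbourhood.

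First I would check that $y \in U$. We have $y \in \pr(A)$ iff there exists $x \in A$ with $\pr(x) = y$, iff $\pr^{-1}(y) \cap (X \setminus D) \neq \emptyset$, iff $\pr^{-1}(y) \not\subseteq D$. The hypothesis $\pr^{-1}(y) \subseteq D$ therefore gives $y \notin \pr(A)$, i.e.\ $y \in U$.

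Next I would verify the ``moreover'' clause $\pr^{-1}(U) \subseteq D$. If $x \in \pr^{-1}(U)$ then $\pr(x) \in U$, so $\pr(x) \notin \pr(A)$; since $x$ is a preimage of $\pr(x)$, this forces $x \notin A$, hence $x \in D$. This also yields $U \subseteq \pr(D)$: for $z \in U$, any $x \in \pr^{-1}(z)$ lies in $\pr^{-1}(U) \subseteq D$ by the previous step, so $z = \pr(x) \in \pr(D)$. Combining $y \in U$ with $U \subseteq \pr(D)$ and openness of $U$ shows $y \in \interior(\pr(D))$.

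There is really no genuine obstacle here: the whole argument is a two-line diagram chase once one notices that the correct object to feed into \prettyref{prop:image-definable-closed} is the definable complement $X \setminus D$ rather than $D$ itself. The saturation hypothesis is used only indirectly, through the previous proposition.
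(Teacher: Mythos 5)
Your argument is correct, and for the ``moreover'' clause it is in fact cleaner than the paper's. Both proofs obtain the first claim the same way: apply \prettyref{prop:image-definable-closed} to the definable complement $A = X\setminus D$ and take $U=(X/E)\setminus\pr(A)$, an open neighbourhood of $y$ contained in $\pr(D)$. Where you diverge is the ``moreover'' part. The paper discards $U$ and starts over: it invokes normality of the compact Hausdorff space $X/E$ to produce a fundamental system of neighbourhoods $U_i$ with $\{y\}=\bigcap_i U_i=\bigcap_i\ov{U_i}$, and then uses saturation (each $\pr^{-1}(\ov{U_i})$ is type-definable, the intersection is contained in the definable set $D$) to extract a single $i$ with $\pr^{-1}(\ov{U_i})\subseteq D$. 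You instead notice that the very same $U$ from the first part already works, since $\pr^{-1}(U)=X\setminus\pr^{-1}(\pr(A))\subseteq X\setminus A=D$. Your route is strictly more elementary and avoids the detour through normality and a second saturation argument. The only thing the paper's longer argument buys is the (unstated, but occasionally convenient) stronger conclusion that one may take $U$ with $\pr^{-1}(\ov U)\subseteq D$; as the proposition is actually phrased, your proof gives exactly what is claimed with less machinery.
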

\begin{proof}
By \prettyref{prop:image-definable-closed}, $Z=\pr(X\setminus D)$
is a closed set in $X/E$ which does not contain $y$. Hence the complement
$O$ of $Z$ is an open neighbourhood of $y$ contained in $\pr(D)$.
We have thus proved the first part. For the second part note that,
since $X/E$ is compact Hausdorff, it is in particular a normal topological
space. We can thus find a fundamental system of open neighbourhoods
$U_{i}$ of $y$ such that $\{y\}=\bigcap_{i}U_{i}=\bigcap_{i}\ov U_{i}$
Each $\pr^{-1}(\ov U_{i})$ is type-definable, and their intersections
is contained in the definable set $D$, so there is some $i\in\N$
such that $\pr^{-1}(\ov U_{i})\subseteq D$. 
\end{proof}
For our last set of propositions we assume that $X$ is a definable
space, possibly with a topology different from the one inherited from
its inclusion in $M^{k}$. 
\begin{prop}
\label{prop:pcontinuous}Assume that $X$ is a definable space and
each fiber of $\pr:X\to X/E$ is a downward directed intersection
of definable open subsets of $X$. Then $\pr:X\to X/E$ is continuous. 
\end{prop}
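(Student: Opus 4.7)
The plan is to verify continuity in the most direct way: take an open subset $O \subseteq X/E$ and show that $\pr^{-1}(O)$ is open in the definable space $X$. By the definition of the logic topology, $\pr^{-1}(O)$ is $\bigvee$-definable, so we may write $\pr^{-1}(O) = \bigcup_{j \in J} D_j$ with $|J| < \kappa$ and each $D_j$ definable. Fix any $x \in \pr^{-1}(O)$; I will exhibit a definable open neighbourhood of $x$ contained in $\pr^{-1}(O)$, which proves that $\pr^{-1}(O)$ is open.

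Let $y = \pr(x)$. By hypothesis, the fiber $\pr^{-1}(y)$ can be written as a small downward directed intersection $\bigcap_{i \in I} V_i$ of definable open subsets of $X$. Since $x \in \pr^{-1}(y) \subseteq \pr^{-1}(O) = \bigcup_{j \in J} D_j$, we obtain the set-theoretic inclusion
\[
\bigcap_{i \in I} V_i \;\subseteq\; \bigcup_{j \in J} D_j,
\]
equivalently $\bigl(\bigcap_{i} V_i\bigr) \cap \bigl(\bigcap_{j} (X \setminus D_j)\bigr) = \emptyset$. The left-hand side is a small intersection of definable conditions, so by saturation ($\kappa$-compactness of the type space) this emptiness is already witnessed by finitely many of the $V_i$'s and finitely many of the $D_j$'s: there exist $i_1, \ldots, i_k \in I$ and $j_1, \ldots, j_m \in J$ with
\[
V_{i_1} \cap \cdots \cap V_{i_k} \;\subseteq\; D_{j_1} \cup \cdots \cup D_{j_m} \;\subseteq\; \pr^{-1}(O).
\]

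Because the family $(V_i)_{i \in I}$ is downward directed, I can pick a single index $i^{*} \in I$ with $V_{i^{*}} \subseteq V_{i_1} \cap \cdots \cap V_{i_k}$. Then $V_{i^{*}}$ is a definable open subset of $X$, it contains $x$ (since $x$ lies in every $V_i$), and it is contained in $\pr^{-1}(O)$. Hence $x$ is interior to $\pr^{-1}(O)$. As $x$ was arbitrary, $\pr^{-1}(O)$ is open in $X$, establishing the continuity of $\pr$.

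The only conceptually nontrivial point is the saturation step converting the inclusion of a type-definable set inside a $\bigvee$-definable set into a finite inclusion; everything else is bookkeeping, with downward directedness used precisely to collapse the resulting finite intersection back to a single definable open neighbourhood.
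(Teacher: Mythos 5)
Your proof is correct, but it takes a genuinely different route from the paper's. The paper simply cites Lemma~\ref{lem:intersection-open} to conclude that each fiber $\pr^{-1}(y)$ is already an open subset of $X$; consequently the preimage of \emph{any} subset $S\subseteq X/E$ is a union of open fibers, hence open, and continuity is immediate (in fact the paper's observation yields the stronger fact that preimages of arbitrary sets are open, not just preimages of open sets). You instead avoid invoking Lemma~\ref{lem:intersection-open} and work directly with the definition of the logic topology: you write $\pr^{-1}(O)$ as a $\bigvee$-definable set $\bigcup_{j}D_j$, observe that the type-definable fiber $\bigcap_i V_i$ sits inside it, apply saturation to extract a finite sub-inclusion, and then use downward directedness to collapse the finite intersection into a single definable open $V_{i^*}$ squeezed between the fiber and $\pr^{-1}(O)$. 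This is a perfectly sound instance of the standard compactness argument (type-definable contained in $\bigvee$-definable implies a definable set in between), but it is both longer and slightly less informative than the paper's one-line proof, since it establishes only continuity rather than the openness of all preimages. Conversely, your argument has the minor virtue of being self-contained: it does not presuppose the lemma about small directed intersections of open definable sets being open.
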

\begin{proof}
By \prettyref{lem:intersection-open} the preimage of any point is
open, hence the preimage of every set is open. 
\end{proof}
\begin{prop}
\label{prop:proj-of-closure}Assume $\pr:X\to X/E$ is continuous
and let $C$ be a definable subset of $X$. Then $\pr(\ov C)=\pr(C)$. 
\end{prop}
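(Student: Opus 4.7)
The inclusion $\pr(C) \subseteq \pr(\ov C)$ is immediate, so the content is in the reverse inclusion $\pr(\ov C) \subseteq \pr(C)$. The plan is to exhibit a closed subset of $X$ that contains $C$ and is saturated under $E$ (i.e.\ a union of $E$-classes), which will force it to contain $\ov C$ and hence give the result.

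The natural candidate is $\pr^{-1}(\pr(C))$. By \prettyref{prop:image-definable-closed}, $\pr(C)$ is closed in $X/E$ because $C$ is definable. Since $\pr$ is continuous (by hypothesis), $\pr^{-1}(\pr(C))$ is closed in $X$. Because $C \subseteq \pr^{-1}(\pr(C))$, taking closures yields $\ov C \subseteq \pr^{-1}(\pr(C))$, and applying $\pr$ gives $\pr(\ov C) \subseteq \pr(C)$, as required.

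There is essentially no obstacle: the whole argument is a two-line combination of continuity of $\pr$ with the fact (\prettyref{prop:image-definable-closed}) that the $\pr$-image of a definable set is closed. The only thing to be careful about is that \prettyref{prop:image-definable-closed} requires $C$ to be definable (which is part of the hypothesis), since it is this definability that makes $\pr^{-1}(\pr(C))$ type-definable and hence closed in the logic topology.
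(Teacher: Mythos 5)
Your proof is correct and rests on the same two ingredients as the paper's: continuity of $\pr$ and \prettyref{prop:image-definable-closed}. The paper phrases it slightly more directly as $\pr(\ov C)\subseteq\ov{\pr(C)}\subseteq\pr(C)$ (using that continuous maps send closures into closures), whereas you pull back to $X$ via the saturated closed set $\pr^{-1}(\pr(C))$; this is a cosmetic difference, not a different route.
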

\begin{proof}
It suffices to observe that $\pr(\ov C)\subseteq\ov{\pr(C})\subseteq\pr(C)$
where the first inclusion holds because $\pr$ is continuous and the
second by \prettyref{prop:image-definable-closed}.
\end{proof}

\section{Triangulation theorems}

The triangulation theorem \cite{vdDries1998} is a powerful tool in
the study of o-minimal structures expanding a field. In this section
we review some of the relevant results and we prove a specific variation
of the normal triangulation theorem of \cite{Baro2010a} for simplexes
with real algebraic vertices. 

Simplicial complexes are defined as in \cite{vdDries1998}. They differ
from the classical notion because simplexes are open, in the sense
that they do not include their faces. As in \cite{vdDries1998}, the
vertices of a simplicial complex are concrete points, namely they
have coordinates in the given o-minimal structure $M$ (expanding
a field). More precisely, given $n+1$ affinely independent points
$a_{0},\ldots,a_{n}\in M^{k}$, the (open)\textbf{ $n$-simplex} $\sigma_{M}=(a_{0},\ldots,a_{n})\subseteq M^{k}$
determined by $a_{0},\ldots,a_{n}$ is the set of all linear combinations
$\sum_{i=0}^{n}\lambda_{i}a_{i}$ with $\lambda_{0}+\ldots+\lambda_{n}=1$
and $0<\lambda_{i}<1$ (with $\lambda_{i}\in M)$. If we go to a bigger
model $N\succeq M$, we write $\sigma_{N}$ for the set defined by
the same formulas but with $\lambda_{i}$ ranging in $N$. We omit
the subscript if there is no risk of ambiguity. A \textbf{closed simplex}
is defined similarly but with the weak inequalities $0\leq\lambda_{i}\leq1$.
In other words a closed simplex is the closure $\bar{\sigma}=\cl(\sigma)$
of a simplex $\sigma$, namely the union of a simplex and all its
faces. 

An \textbf{simplicial complex} is a finite collection $P$ of (open)
simplexes, with the property that for all $\sigma,\theta\in P$, $\ov{\sigma}\cap\ov{\theta}$
is either empty or the closure of some $\delta\in P$ (a common face
of the two simplexes). We shall say that $P$ is a \textbf{closed
simplicial complex} if whenever it contains a simplex it contains
all its faces. In this case we write $\ov P$ for the collection of
all closures $\bar{\sigma}$ of simplexes $\sigma$ of $P$ and we
call $\bar{\sigma}$ a closed simplex of $P$. 

The \textbf{geometrical realization} $|P|$ of a simplicial complex
$P$ is the union of its simplexes. 

We shall often assume that $P$ is defined over $\Ralg$, namely its
vertices have real algebraic coordinates, so that we can realize $P$
either in $M$ or in $\R$. In this case, we write $|P|_{M}$ or $|P|_{\R}$
for the geometric realization of $P$ in $M$ or $\R$ respectively.
Notice that a simplicial complex is closed if and only if its geometrical
realization is closed in the topology induced by the order of $M$. 

If $L\subseteq P$ is a subcomplex of $P$, and $\sigma\in P$, we
define $|\sigma_{|L}|_{\R}=\sigma\cap|L|_{\R}$ and $|\sigma_{|L}|_{M}=\sigma\cap|L|_{M}$.
To keep the notation uncluttered, we simply write $\sigma_{|L}$ when
the model is clear from the context.
\begin{defn}
A triangulation of a definable set $X\subseteq M^{m}$ is a pair $(P,\phi)$
consisting of a simplicial complex $P$ defined over $M$ and a definable
homeomorphism $\phi:|P|_{M}\to X$. We say that the triangulation
$\phi$ is compatible with a subset $S$ of $X$ if $S$ is the union
of the images of some of the simplexes of $P$. 
\end{defn}
\begin{fact}[o-minimal triangulation theorem \cite{vdDries1998}]
\label{fact:triangulation}Every definable set $X\subseteq M^{m}$
can be triangulated. Moreover, if $S_{1},\ldots,S_{l}$ are finitely
many definable subsets of $X$, there is a triangulation $\phi:|P|_{M}\to X$
compatible with $S_{1},\ldots,S_{l}$. 
\end{fact}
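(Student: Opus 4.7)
The plan is to prove the triangulation theorem by induction on the ambient dimension $m$. The base case $m=1$ follows immediately from o-minimality: every definable subset of $M$ is a finite disjoint union of points and open intervals, so one obtains a triangulation by placing $0$-simplices at all endpoints occurring in $X \cup S_{1} \cup \cdots \cup S_{l}$ and $1$-simplices for the open intervals between them.

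For the inductive step, I would first reduce to $X$ bounded by composing with a definable homeomorphism $M^{m} \to (-1,1)^{m}$. Next I would apply the cell decomposition theorem to partition $\ov X$ into finitely many cells compatible with $X$ and each $S_{j}$. After a generic linear change of coordinates---available because $M$ is infinite---the cells are put in general position with respect to the projection $\pi \colon M^{m} \to M^{m-1}$: distinct zero-dimensional cells project to distinct points, and the closures of the cells project to a finite definable partition of $\pi(X)$. Applying the induction hypothesis to $\pi(X)$ together with this projected family yields a triangulation $\psi \colon |Q|_{M} \to \pi(X)$ compatible with all of them.

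To lift $\psi$ to a triangulation of $X$, I would exploit the stacked structure of the cell decomposition: over each open simplex $\tau$ of $Q$, the cells of $X$ appear as graphs of continuous definable functions $f_{1} < \cdots < f_{k}$ on $\ov\tau$ together with the open strips between consecutive graphs. The crucial step is to refine $Q$ (by subdividing its simplices) so that each $f_{j}$ becomes affine on every simplex of the refinement; once this is achieved, each graph is itself a simplex in $M^{m}$, and each strip can be triangulated by a cone or prism construction joining the vertices of the two bounding graph-simplices. Compatibility with $X$ and the $S_{j}$ is then automatic, since the whole construction refines the original cell decomposition.

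The hard part will be this linearization step: producing a refinement of $Q$ on which all the continuous definable functions $f_{j}$ become piecewise affine. This is where o-minimality enters essentially, through cell decomposition applied to the graphs of the $f_{j}$ viewed as subsets of $M^{m}$, curve selection to control their behavior near the boundary of $\ov\tau$, and a further inductive subdivision of the base triangulation. Once this technical lemma is in hand, the remaining assembly of the simplicial complex in $M^{m}$ is combinatorial.
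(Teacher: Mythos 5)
First, note that the paper does not prove this statement---it is stated as a Fact and cited from van den Dries's book---so there is no in-paper argument to compare against. Evaluated on its own merits, your sketch follows the broad outline of the standard proof (induction on the ambient dimension, reduction to the bounded case, cell decomposition compatible with all the given sets, a generic rotation to put cells in good position over the projection $\pi$, then project and lift), but the key lifting step as you formulate it is not correct.

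The problem is the linearization claim: you assert that after subdividing the base triangulation $Q$, each continuous definable function $f_j$ becomes affine on every simplex of the refinement. This is false in general. Already over $\R$ the semialgebraic function $x\mapsto x^2$ on $[0,1]$ is affine on no subinterval, so no finite (nor even definable) subdivision makes it piecewise affine; the same goes for $\sqrt{1-x^2}$, or for $e^x$ in an exponential o-minimal structure. No amount of subdivision straightens a genuinely curved definable graph. The triangulation theorem does not claim that $X$ is (up to subdivision) a polyhedron; it only produces a simplicial complex $P$ together with a definable homeomorphism $\phi:|P|_M\to X$. The correct lifting lemma therefore builds a \emph{linear model} and absorbs the curvature into $\phi$: over each simplex $\tau$ of the base one forms the affine functions $\hat f_j$ on $\ov\tau$ determined by the values $f_j(v)$ at the vertices $v$ of $\tau$; the graphs of the $\hat f_j$ and the prisms between consecutive ones are genuine simplices and constitute $P$, and $\phi$ is defined fiberwise, sending the affine interpolation between $\hat f_j(x)$ and $\hat f_{j+1}(x)$ to the corresponding interpolation between $f_j(x)$ and $f_{j+1}(x)$ for each $x\in\tau$. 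The real combinatorial difficulty is then in handling degenerate vertex values (the $f_j(v)$ may coincide for different $j$, or on faces of $\tau$) so that the pieces glue into a bona fide complex, which is what the ``special decomposition'' machinery in van den Dries is for. Your outline would need to be rewritten along these lines; as stated, the technical lemma you call the hard part is asking for something that cannot be done.
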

Now, suppose that we have a triangulation $\phi:|P|_{M}\to X$ and
we consider finitely many definable subsets $S_{1},\ldots,S_{l}$
of $X$. The triangulation theorem tells us that there is another
triangulation $\psi:|P'|_{M}\to X$ compatible with $S_{1},\ldots,S_{l}$,
but it does not say that we can choose $P'$ to be a subdivision of
$P$, thus in general $|P'|_{M}$ will be different from $|P|_{M}$.
This is going to be a problem if we want to preserve certain properties.
For instance suppose that $\phi$ is a definable homotopy (namely
its domain $|P|_{M}$ has the form $Z\times I$ where $I=[0,1]$).
The triangulation theorem does not ensure that $\psi$ can be taken
to be a definable homotopy as well. 

The ``normal triangulation theorem'' of Baro \cite{Baro2010a} is
a partial remedy to this defect: it ensures that we can indeed take
$P'$ to be a subdivision of $P$, hence in particular $|P'|_{M}=|P|_{M}$,
although $\psi$ will not in general be equal to $\phi$. The precise
statement is given below. It suffices to consider the special case
when $X=|P|$ and $\phi$ is the identity. 
\begin{defn}
Let $P$ be an (open) simplicial complex in $M^{m}$ and let $S_{1},\ldots,S_{l}$
be definable subsets of $|P|$. A \textbf{normal triangulation} of
$P$ is a triangulation $(P',\phi')$ of $|P|$ satisfying the following
conditions: 

\begin{enumerate}
\item $P'$ is a subdivision of $P$;
\item $(P',\phi')$ is compatible with the simplexes of $P$; 
\item for every $\tau\in P'$ and $\sigma\in P$, if $\tau\subseteq\sigma$
then $\phi'(\tau)\subseteq\sigma$. 
\end{enumerate}
\end{defn}
From (3) it follows that the restriction of $\phi'$ to a simplex
$\sigma\in P$ is a homeomorphism onto $\sigma$ and $\phi$ is definably
homotopic to the identity on $|P|$. 
\begin{fact}[Normal triangulation theorem \cite{Baro2010a}]
\label{fact:normal-triangulation}If $S_{1},\ldots,S_{l}$ are finitely
many definable subsets of $|P|$, there exists a normal triangulation
of $P$ compatible with $S_{1},\ldots,S_{l}$ . 
\end{fact}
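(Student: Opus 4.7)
The plan is to induct on the skeleton dimension of $P$, building the homeomorphism $\phi'$ and the subdivision $P'$ one dimension at a time, starting from a triangulation supplied by \prettyref{fact:triangulation}. Concretely, I would first apply the ordinary triangulation theorem to the finite collection $\{S_{1},\ldots,S_{l}\}\cup\{\bar{\sigma}\suchthat\sigma\in P\}$, obtaining a triangulation $\psi:|Q|_{M}\to|P|_{M}$ compatible with every $S_{i}$ and with every closed simplex of $P$. Restricted to $\psi^{-1}(\bar{\sigma})$, this already yields, for each $\sigma\in P$, a triangulation of $\bar{\sigma}$ compatible with $S_{i}\cap\bar{\sigma}$ and with every face of $\sigma$; what fails is that $Q$ need not be a subdivision of $P$ and that condition~(3) in the definition of normal triangulation need not hold.

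Starting from $\phi'=\mathrm{id}$ on the vertices of $P$, suppose by induction that $\phi'$ has been defined on the closed $(d-1)$-skeleton of $P$ so that, on each closed face $\bar{\sigma}$ of dimension $\leq d-1$, it is a definable self-homeomorphism arising from a subdivision of $\sigma$ that is compatible with $S_{i}\cap\bar{\sigma}$ and with every sub-face, and satisfies the containment condition~(3). To extend over a $d$-simplex $\sigma\in P$, I would apply an Alexander-trick style cone construction from the barycentre $b_{\sigma}$ of $\sigma$: the extension is defined to be the radial map in barycentric coordinates that coincides with the prescribed $\phi'$ on $\partial\bar{\sigma}$ and with the identity at $b_{\sigma}$. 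This automatically produces a definable self-homeomorphism of $\bar{\sigma}$ sending $\sigma$ into $\sigma$, and the corresponding subdivision of $\sigma$ is obtained by coning the already-constructed subdivision of $\partial\bar{\sigma}$ from $b_{\sigma}$.

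The main obstacle is compatibility with the $S_{i}$ inside $\sigma$. The cone of the boundary subdivision is a subdivision of $\sigma$ but need not be compatible with $S_{i}\cap\sigma$. To remedy this, I would use the internal $\psi$-triangulation of $\bar{\sigma}$, which already respects $S_{i}\cap\bar{\sigma}$, and perform a common refinement before coning: refine the coned subdivision by intersecting it with the pullback of $\psi$'s simplices, then further subdivide barycentrically to obtain a genuine simplicial complex compatible with all the $S_{i}\cap\bar{\sigma}$. Matching along common $(d-1)$-faces of two adjacent $d$-simplices is automatic because both cone constructions fix the boundary data supplied by the inductive hypothesis; in particular, continuity and definability of the global $\phi'$ follow by gluing. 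Once the top dimension is reached, conditions (1)--(3) of the definition of normal triangulation and compatibility with $S_{1},\ldots,S_{l}$ all hold simplex by simplex, completing the construction.
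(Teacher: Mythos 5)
This is a \emph{Fact} imported from Baro's paper \cite{Baro2010a}; the present paper contains no proof of it, so your attempt is an independent reconstruction of Baro's theorem rather than a reproduction of an argument here. As such it has a genuine gap.

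The Alexander-trick extension pins $\phi'$ on each $d$-simplex $\bar{\sigma}$ down to the radial cone map from the barycentre $b_{\sigma}$, and this choice is made \emph{before} any accommodation of the sets $S_{i}\cap\sigma$. Once $\phi'$ is the radial extension, the images $\phi'(\tau)$ for $\tau$ in the coned subdivision are all of the form $b_{\sigma}*\phi'(\tau')$ with $\tau'\subseteq\partial\bar{\sigma}$, i.e.\ radial cones. Compatibility with $S_{i}$ is a condition on these images, and $S_{i}\cap\sigma$ is an arbitrary definable set with no reason to decompose into radial cones. Refining the domain subdivision $P'_{\sigma}$ while leaving $\phi'$ fixed only produces finer radial cones; it can never make the images align with a non-radially-structured $S_{i}\cap\sigma$. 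So the step ``perform a common refinement before coning'' does not repair compatibility unless you also \emph{redefine} $\phi'$ in the interior — and then the claim that ``matching along common $(d-1)$-faces is automatic'' no longer stands, because the new interior map has to be checked against the inductive boundary data.

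Worse, the object you propose to refine against is not polyhedral. The triangulation $\psi:|Q|_{M}\to|P|_{M}$ from \prettyref{fact:triangulation} is only a definable homeomorphism, not a simplicial or PL map, so the sets $\psi(\tau)\subseteq|P|$ for $\tau\in Q$ are definable but are not convex polytopes. Intersecting a polyhedral subdivision of $\sigma$ with this family does not produce a cell complex, and barycentric subdivision is defined only for complexes with convex cells. This is precisely the difficulty the normal triangulation theorem is designed to overcome: the plain triangulation theorem gives a homeomorphism onto $|Q|$ whose domain lives in a different ambient space and bears no linear relationship to $P$, and one cannot simply ``pull back'' its simplices to refine a subdivision of $P$. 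A correct proof has to build the subdivision $P'$ and the homeomorphism $\phi'$ simultaneously, arranging at each inductive stage that the $\phi'$-images of the new simplices partition $S_{i}\cap\sigma$ — a substantially more delicate construction than a cone extension followed by refinement can deliver.
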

Since we are particularly interested in triangulations where the vertices
of the simplicial complex have real algebraic coordinates, we prove
the following proposition, which guarantees that the normal triangulation
of a real algebraic simplicial complex can be also chosen to be real
algebraic.
\begin{prop}
\label{prop:algebraic-subdivision}Let $P$ be a simplicial complex
in $M^{k}$ defined over $\Ralg$ and let $L$ be a subdivision of
$P$. Then there is a subdivision $L'$ of $P$ such that: 

\begin{enumerate}
\item $L'$ is defined over $\Ralg$; 
\item there is a simplicial homeomorphism $\psi:|L|\to|L'|$ which fixes
all the vertices of $L$ with real algebraic coordinates.
\end{enumerate}
\end{prop}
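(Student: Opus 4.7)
The plan is to perturb the vertices of $L$ that are not in $\Ralg^{k}$ to nearby real algebraic positions, keeping each such vertex inside the open simplex of $P$ to which it belongs and preserving the combinatorial type of the subdivision. We enumerate the vertices of $L$ as $v_{1},\ldots,v_{N}$, with $v_{1},\ldots,v_{r}\in\Ralg^{k}$ and $v_{r+1},\ldots,v_{N}\in M^{k}$. For each $j$ let $\sigma_{j}\in P$ be the unique open simplex of $P$ that contains $v_{j}$; since $P$ is defined over $\Ralg$, each $\sigma_{j}$ is semialgebraic over $\Ralg$, and membership in $\sigma_{j}$ is expressed by the strict positivity of the barycentric coordinates relative to the vertices of $\sigma_{j}$, which lie in $\Ralg$.

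We then let $V\subseteq(M^{k})^{N-r}$ be the set of tuples $(w_{r+1},\ldots,w_{N})$ such that replacing $v_{j}$ by $w_{j}$ for $j>r$ produces a simplicial complex $L^{*}$ with the same combinatorial structure as $L$ which is again a subdivision of $P$. Fixing the combinatorial structure turns ``being a subdivision of $P$'' into a finite conjunction of semialgebraic conditions on $(w_{r+1},\ldots,w_{N})$: each $w_{j}$ lies in $\sigma_{j}$; the vertex tuple of every combinatorial simplex of $L$ is affinely independent; and, for each $\sigma\in P$, the closed sub-simplexes of $L$ combinatorially assigned to $\bar\sigma$ cover $\bar\sigma$ and have pairwise disjoint interiors. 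All parameters appearing in these conditions come from the vertices of $P$ and from $v_{1},\ldots,v_{r}$, hence lie in $\Ralg$. Thus $V$ is semialgebraic over $\Ralg$, and $V$ is nonempty because it contains $(v_{r+1},\ldots,v_{N})$. Since $M$ and $\Ralg$ are both real closed fields with $\Ralg\subseteq M$, model completeness of the theory of real closed fields yields $\Ralg\preceq M$ as ordered fields, so $V$ has a point $(w_{r+1},\ldots,w_{N})\in(\Ralg^{k})^{N-r}$. The resulting complex $L'$ verifies (1).

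For (2) we define $\psi\colon|L|\to|L'|$ as the simplicial map sending $v_{j}\mapsto v_{j}$ for $j\leq r$, $v_{j}\mapsto w_{j}$ for $j>r$, extended by barycentric interpolation on each simplex of $L$. On each closed simplex of $L$ this is an affine bijection onto the corresponding closed simplex of $L'$; since the combinatorial structure is shared, these restrictions agree on common faces and glue to a continuous bijection, with the symmetric construction supplying the continuous inverse. Thus $\psi$ is a simplicial homeomorphism, and it fixes every real algebraic vertex of $L$ by construction.

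The main obstacle will be spelling out carefully that ``the perturbed complex is still a subdivision of $P$ with the prescribed combinatorial type'' is genuinely first-order semialgebraic over $\Ralg$. The point is that, with the combinatorial type held fixed, the geometric requirements reduce per simplex $\sigma\in P$ to the conditions that the assigned closed sub-simplexes have pairwise disjoint interiors and union $\bar\sigma$; these translate into polynomial (in)equalities in the vertex coordinates with parameters only at the vertices of $P$, hence over $\Ralg$, which is what makes the transfer $V(M)\neq\emptyset\Rightarrow V(\Ralg)\neq\emptyset$ via $\Ralg\preceq M$ legitimate.
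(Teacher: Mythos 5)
Your proof is correct and follows essentially the same strategy as the paper's: fix the combinatorial type of the pair $(P,L)$, observe that the requirement ``these vertex positions realize that combinatorial type as a geometric subdivision of $P$'' is a first-order semialgebraic condition with parameters in $\Ralg$, note that it has a solution in $M$ (the actual vertices of $L$), and invoke model completeness of RCF to get a solution with real algebraic coordinates, from which the simplicial isomorphism $\psi$ is built. The only noteworthy difference is cosmetic bookkeeping: the paper introduces variables for all vertices of $L$ and then adds equations pinning the real algebraic ones, whereas you parametrize only the non-algebraic ones; and your list of first-order clauses (carrier, affine independence, covering with disjoint interiors) differs slightly from the paper's (affine independence, closed simplexes meet in common faces, carrier) — in truth a fully airtight encoding should contain both the ``closures intersect in the closure of the common face'' clauses and the covering clause, a point you yourself flag as the part that needs careful spelling out — but this does not affect the soundness of the argument.
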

\begin{proof}
Since $L$ is a subdivision of $P$, we have an inclusion of the zero-skeleta
$|P^{0}|\subseteq|L^{0}|\subseteq M^{k}$. For each $v\in|L^{0}|$,
let $v_{1},\ldots,v_{k}\in M$ be the coordinates of $v$. The idea
is that the combinatorial properties of the pair $(P,L)$ (namely
the properties invariant by isomorphisms of pairs of abstract complexes)
can be described, in the language of ordered fields, by a first order
condition $\varphi_{L,P}(\bar{x})$ on the coordinates $\bar{x}$
of the vertices. We then use the model completeness of the theory
of real closed fields to show that $\varphi_{L,P}(\bar{x})$ can be
satisfied in the real algebraic numbers. 

The details are as follows. For each $v\in|L^{0}|$ we introduce free
variables $x_{1}^{v},\ldots,x_{k}^{v}$ and let $\bar{x}^{v}$ be
the $k$-tuple $x_{1}^{v},\ldots,x_{k}^{v}$. Finally let $\bar{x}$
be the tuple consisting of all these variables $x_{i}^{v}$ as $v$
varies. We can express in a first order way the following conditions
on $\bar{x}$:

\begin{enumerate}
\item If $\sigma=(v_{0},\ldots,v_{n})\in L$, then $\sigma(\bar{x}):=(\bar{x}^{v_{0}},\ldots,\bar{x}^{v_{n}})$
is $n$-simplex, namely $\bar{x}^{v_{0}},\ldots,\bar{x}^{v_{n}}$
are affinely independent;
\item If $\sigma_{1}$ and $\sigma_{2}$ are open simplexes of $L$ with
common face $\tau$, then $\cl(\sigma_{1}(\bar{x}))\cap\cl(\sigma_{2}(\bar{x}))=\cl(\tau(\bar{x}))$;
\item If $\sigma_{1}$ and $\sigma_{2}$ are open simplexes of $L$ with
no face in common, then $\cl(\sigma_{1}(\bar{x}))\cap\cl(\sigma_{2}(\bar{x}))=\emptyset$;
\item If $\sigma\subseteq\tau$ with $\sigma\in L$ and $\tau\in P$, then
$\sigma(\ov x)\subseteq\tau(\ov x)$.
\end{enumerate}
These clauses express the fact that the collection $\sigma(\bar{x})$
as $\sigma$ varies in $L$ is a simplicial complex $L(\bar{x})$
(depending on the value of $\bar{x}$) isomorphic to $L$. Similarly
we can define $P(\bar{x})$ and express the fact that $L(\bar{x})$
is a subcomplex of $P(\bar{x})$. Our desired formula $\phi_{P,L}(\bar{x})$
is the conjunction of these clauses together with the conditions $x_{i}^{v}=v_{i}$
whenever $v_{i}$ is real algebraic. By definition $\phi_{P,L}(\bar{x})$
holds in $M$ if we evaluate each variable $x_{i}^{v}$ as the $i$-th
coordinate of the vector $v$. By the model completeness of the theory
of real closed fields, the formula can be satisfied by a tuple $\bar{a}$
of real algebraic numbers. The map sending each $v$ to $\bar{a}^{v}$
induces the desired isomorphism $\psi:L\to L'=L(\bar{a})$. 
\end{proof}
Later we shall need the following. 
\begin{prop}
\label{prop:normal-small}Let $P$ be a simplicial complex, let $X$
be a definable space and let $f:|P|_{M}\to X$ be a definable function.
Let $\V=\{V_{i}\suchthat i\in I\}$ be a small family of $\bigvee$-definable
sets $V_{i}\subseteq X$ whose union covers the image of $f$. Then
there is a subdivision $P'$ of $P$ and a normal triangulation $(P',\phi)$
of $P$ such that for every $\sigma\in P'$, $(f\circ\phi)(\sigma_{M})$
is contained in some $V_{i}$. Moreover, if $P$ is defined over $\Ralg$,
we can take $P'$ defined over $\Ralg$. 
\end{prop}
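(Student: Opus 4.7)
The plan is to compress the $\bigvee$-definable cover to a finite definable cover by saturation, feed the resulting pullbacks into Baro's normal triangulation theorem, and then, when $P$ is over $\Ralg$, promote to the real algebraic setting via \prettyref{prop:algebraic-subdivision}.

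First I would write each $V_{i}$ as a small union $V_{i}=\bigcup_{j}V_{i,j}$ of definable sets. The image $C:=f(|P|_{M})$ is definable (being the image of a definable function on a definable set) and is contained in the $\bigvee$-definable union $\bigcup_{i,j}V_{i,j}$. By saturation, $C$ is already contained in finitely many of the pieces, so I obtain definable sets $D_{1},\ldots,D_{n}\subseteq M^{k}$ with $D_{k}\subseteq V_{i_{k}}$ and $C\subseteq D_{1}\cup\ldots\cup D_{n}$. I then apply the normal triangulation theorem (\prettyref{fact:normal-triangulation}) to $P$ together with the finite family of definable subsets $f^{-1}(D_{1}),\ldots,f^{-1}(D_{n})$ of $|P|_{M}$. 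The resulting subdivision $P'$ and normal triangulation $(P',\phi)$ are compatible with each $f^{-1}(D_{k})$, so for every $\sigma\in P'$ there is some $k$ with $\phi(\sigma_{M})\subseteq f^{-1}(D_{k})$, and hence $(f\circ\phi)(\sigma_{M})\subseteq D_{k}\subseteq V_{i_{k}}$, which is the required covering property.

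For the real algebraic refinement, assume in addition that $P$ is defined over $\Ralg$. I would apply \prettyref{prop:algebraic-subdivision} to the subdivision $P'$ of $P$ to obtain a subdivision $L'$ of $P$ defined over $\Ralg$ together with a simplicial homeomorphism $\psi\colon|P'|\to|L'|$ fixing all real algebraic vertices of $P'$; in particular $\psi$ fixes the vertices of $P$ and therefore maps each closed simplex of $P$ onto itself. Replacing $(P',\phi)$ with $(L',\phi\circ\psi^{-1})$ yields the desired triangulation: the three normality conditions carry over because $\psi^{-1}$ preserves the containment $\tau\subseteq\sigma$ whenever $\sigma\in P$, and for each $\tau\in L'$ the preimage $\psi^{-1}(\tau_{M})$ is the $M$-realization of some simplex $\gamma\in P'$, so $(f\circ\phi\circ\psi^{-1})(\tau_{M})=(f\circ\phi)(\gamma_{M})\subseteq V_{i_{k}}$ for the corresponding $k$.

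I do not expect any serious obstacle: the argument reduces to a straightforward saturation step and two direct applications of triangulation results already established in the paper. The only mild subtlety is verifying that a simplicial $\psi$ which fixes the vertices of $P$ simultaneously inherits both the normality conditions and the covering property from $(P',\phi)$, but this follows at once from how simplicial maps act on individual simplices.
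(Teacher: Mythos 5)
Your proof is correct and follows essentially the same route as the paper's: compress the $\bigvee$-definable cover to a finite definable cover via saturation, apply Baro's normal triangulation theorem to the pullbacks $f^{-1}(D_k)$, and invoke \prettyref{prop:algebraic-subdivision} for the real algebraic refinement. The only cosmetic difference is that you perform a single saturation step on the pieces $V_{i,j}$, whereas the paper does it in two stages (first a finite subfamily of the $V_i$, then definable $U_i\subseteq V_i$); these are interchangeable.
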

\begin{proof}
By saturation of $M$ there is a finite set $J\subseteq I$ such that
$\img(f)\subseteq\bigcup_{i\in J}V_{i}$. Again by saturation there
are definable subsets $U_{i}\subseteq V_{i}$ for $i\in J$ such that
$\img(f)\subseteq\bigcup_{i\in J}U_{i}$. By \prettyref{fact:normal-triangulation}
there is a subdivision $P'$ of $P$ and a normal triangulation $(P',\phi)$
of $P$ compatible with the definable sets $f^{-1}(U_{i})$, for $i\in J$.
Thus for $\sigma\in P'$, there is $i\in J$ such that $\phi(\sigma_{M})\subseteq f^{-1}(U_{i})$,
namely $(f\circ\phi)(\sigma_{M})\subseteq U_{i}$. 

The ``moreover'' part follows from \prettyref{prop:algebraic-subdivision}.
Indeed, if $P$ is over $\Ralg$, we first obtain $(P',\phi)$ as
above. If $P'$ is over $\Ralg$ we are done. Otherwise, we take a
subdivision $P''$ of $P$ over $\Ralg$ and a simplicial isomorphism
$\psi:P''\to P'$, and replace $(P',\phi)$ with $(P'',\phi\circ\psi)$. 
\end{proof}

\section{Standard part map\label{sec:Standard-part-map}}

Let $X=X(M)\subseteq M$ be a definable set and suppose $X\subseteq[-n,n]$
for some $n\in\N$. Then there is a map $\st:X\to\R$, called \textbf{standard
part}, which sends $a\in X$ to the unique $r\in\R$ satisfying the
same inequalities $p<x<q$ with $p,q\in\Q$. 

More generally, let $X$ be a definable subset of $M^{k}$ and assume
$X\subseteq[-n,n]^{k}$ for some $n\in\N$. We can then define $\st:X\to\R^{k}$
component-wise, namely $\st((a_{1},\ldots,a_{k})):=(\st(a_{1}),\ldots,\st(a_{k}))$. 

Now let 
\[
E:=\ker(\st)\subseteq X\times X
\]
be the type-definable equivalence relation induced by $\st$, namely
$aEb$ if and only if $\st(a)=\st(b)$. There is a natural bijection
$\st(X)\cong X/E$ sending $\st(a)$ to the class of $a$ modulo $E$,
so in particular $E$ has bounded index. The next two propositions
are probably well known but we include the proof for the reader's
convenience. 
\begin{prop}
The natural bijection $\st(a)\mapsto a/E$ is homeomorphism 
\[
\st(X)\cong X/E
\]
where $X/E$ has the logic topology and $\st(X)\subseteq\R^{k}$ has
the euclidean topology. 
\end{prop}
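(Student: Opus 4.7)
The plan is to establish the continuity of the inverse map $\phi^{-1}:X/E\to\st(X)$, $a/E\mapsto\st(a)$; once this is done, the claim follows from the classical fact that a continuous bijection from a compact space to a Hausdorff space is a homeomorphism, applied here because $X/E$ is compact Hausdorff by the general properties of the logic topology recalled above, while $\st(X)\subseteq\R^{k}$ is Hausdorff as a subspace of $\R^{k}$. The bijectivity of $\phi$ is immediate from $E=\ker(\st)$.

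To prove continuity of $\phi^{-1}$, fix an open $V\subseteq\st(X)$ and write $V=W\cap\st(X)$ with $W\subseteq\R^{k}$ open in the Euclidean topology. By the definition of the logic topology, it suffices to show that
\[
\pr^{-1}(\phi(V))=\{a\in X\suchthat\st(a)\in W\}
\]
is $\bigvee$-definable in $X$. Writing $W$ as a countable union of open boxes $\prod_{j=1}^{k}(p^{j},q^{j})$ with rational endpoints, and using that a small union of $\bigvee$-definable sets is $\bigvee$-definable, the task reduces to handling a single such box $B$.

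For an open rational box $B=\prod_{j}(p^{j},q^{j})$ the key identity is
\[
\{a\in X\suchthat\st(a)\in B\}=\bigcup_{\eps\in\Q_{>0}}\bigl\{a\in X\suchthat p^{j}+\eps\le a_{j}\le q^{j}-\eps\text{ for all }j\bigr\},
\]
which follows from the density of $\Q$ in $\R$ together with the fact that each $a_{j}$ differs from $\st(a_{j})$ by an infinitesimal quantity. The right-hand side is a small union of definable subsets of $X$, hence $\bigvee$-definable in $X$.

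The only step that demands any care is this final equivalence, i.e.\ the translation of ``$\st(a_{j})\in(p,q)$'' into the existence of a rational margin $\eps>0$ with $p+\eps\le a_{j}\le q-\eps$; the rest of the argument is essentially formal bookkeeping.
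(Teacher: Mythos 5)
Your proof is correct and takes essentially the same approach as the paper: both arguments boil down to approximating Euclidean-open (resp.\ closed) subsets of $\R^k$ by countably many definable semialgebraic sets, so that their standard-part preimages are $\bigvee$-definable (resp.\ type-definable). You make the compactness-vs.-Hausdorff step explicit where the paper's "the proposition follows" leaves it implicit, which is a minor stylistic difference rather than a different route.
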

\begin{proof}
Every closed subset $C$ of $\R^{k}$ can be written as the intersection
$\bigcap_{i}C_{i}$ of a countable collection of closed $\emptyset$-semialgebraic
sets $C_{i}$ (where ``$\emptyset$-semialgebraic'' means ``semialgebraic
without parameters''). We then have $\st(a)\in C$ if and only $a\in\bigcap C_{i}(M)$.
This shows that the closed sets $C\subseteq\st(X)\subseteq\R^{k}$
in the euclidean topology correspond to the sets whose preimage in
$X$ is type-definable, and the proposition follows. 
\end{proof}
Thanks to the above result we can identify $\st:X\to\st(X)$ and $\pr:X\to X/E$
where $E=\ker(\st)$. The next proposition shows that these maps are
continuous. 
\begin{prop}
\label{prop:preimage-open}The preimage of any point of $y\in\st(X)$
under $\st:X\to\st(X)$ is open in $X\subseteq M^{k}$. In particular,
the standard part map is continuous (as the preimage of every subset
is open). 
\end{prop}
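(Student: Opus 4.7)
The plan is to express each fiber $\st^{-1}(y)$ as the intersection of a small downward directed family of definable open subsets of $X$, and then invoke \prettyref{lem:intersection-open} to conclude openness.

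Given $y=(y_{1},\ldots,y_{k})\in\st(X)$, I would first unpack the definition of the standard part: a point $b\in X$ satisfies $\st(b)=y$ exactly when, for every coordinate $i\leq k$ and every pair of rationals $p,q\in\Q$ with $p<y_{i}<q$, one has $p<b_{i}<q$. This rewrites the fiber as
\[
\st^{-1}(y)=\bigcap_{(i,p,q)}W_{i,p,q},\qquad W_{i,p,q}:=\{b\in X\suchthat p<b_{i}<q\},
\]
the index set being the countable family of triples with $p,q\in\Q$ and $p<y_{i}<q$. Each $W_{i,p,q}$ is a definable open subset of $X$.

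Next, I would pass from the family $\{W_{i,p,q}\}$ to the family of its finite intersections. This new family is still countable (hence small), is genuinely downward directed, consists of definable open subsets of $X$, and has the same intersection $\st^{-1}(y)$. Applying \prettyref{lem:intersection-open} then yields that the fiber is open. For the ``in particular'' clause, I would observe that for any $T\subseteq\st(X)$ one has $\st^{-1}(T)=\bigcup_{y\in T}\st^{-1}(y)$, a union of open sets, which is open; continuity is immediate.

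I do not expect a serious obstacle here: once the fiber is expressed via rational inequalities, the statement reduces mechanically to \prettyref{lem:intersection-open}. The only genuine content is the saturation of $M$, which is already packaged inside that lemma and is what ensures that a small intersection of definable open neighbourhoods of a point remains a neighbourhood.
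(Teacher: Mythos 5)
Your proof is correct and follows essentially the same route as the paper: write the fiber as a small downward directed intersection of definable open subsets of $X$ and invoke \prettyref{lem:intersection-open}. The paper writes the fiber directly as $\bigcap_{n\in\N}\{b\in X\suchthat|b-r|<1/n\}$ with $r=\st(a)$, which is already a nested chain, whereas you use rational boxes and pass to finite intersections to get directedness; this is a cosmetic difference, though your version has the mild advantage of using only rational parameters.
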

\begin{proof}
Let $a\in X$ and let $r:=\st(a)\in\R^{k}$. Then $\st^{-1}(r)=\bigcap_{n\in\N}\{b\in X\suchthat|b-r|<1/n\}$.
This is a small intersection of relatively open subsets of $X$, so
it is open in $X$ by \prettyref{lem:intersection-open}.
\end{proof}
\begin{rem}
If $X=X(M)\subseteq M^{k}$ is $\emptyset$-semialgebraic, we may
interpret the definining formula of $X$ in $\R$ and consider the
set $X(\R)\subseteq\R^{k}$ of real points of $X$. If we further
assume that $X$ is closed and bounded, then $X\subseteq[-n,n]^{k}$
for some $n\in\N$, so we can consider the standard part map $\st:X\to\R^{k}$.
It is easy to see that in this case $\st(X)$ coincides with $X(\R)$,
so we can write $X(\R)=\st(X)\cong X/E$. 
\end{rem}
Our next goal is to stuty the fibers of $\st:X\to X(\R)$. We need
the following.
\begin{defn}
\label{def:open-star}Given a simplicial complex $P$ and a point
$x\in|P|$ (not necessarily a vertex), the \textbf{open star} of $x$
with respect to $P$, denoted $\St(x,P)$, is the union of all the
simplexes of $P$ whose closure contains $x$. 
\end{defn}
\begin{prop}
\label{prop:intersection-of-stars}Given $x,y\in|P|$, if $\St(x,P)\cap\St(y,P)$
is non-empty, then there is $z\in|P|$ such that $\St(x,P)\cap\St(y,P)=\St(z,P)$. 
\end{prop}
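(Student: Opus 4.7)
The plan is to identify, for each point $w \in |P|$, the unique open simplex $\sigma_w \in P$ containing $w$ (it exists and is unique because distinct open simplexes of $P$ are disjoint subsets of $M^{k}$), and then to observe that a simplex $\sigma \in P$ satisfies $\sigma \subseteq \St(w,P)$ if and only if $w \in \ov{\sigma}$, if and only if $\sigma_w$ is a face of $\sigma$: indeed the open face of $\sigma$ containing $w$ must coincide with $\sigma_w$ by uniqueness of the open simplex of $P$ through $w$. With this reformulation, $\St(x,P) \cap \St(y,P)$ is exactly the union of those $\sigma \in P$ that admit both $\sigma_x$ and $\sigma_y$ as faces; call such a $\sigma$ \emph{admissible}. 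By hypothesis, at least one admissible simplex exists.

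The candidate for $z$ is any interior point of an admissible simplex $\sigma_z$ of minimal dimension, so that $\sigma_z$ becomes the open simplex of $P$ containing $z$. The key technical step is the uniqueness of this minimal admissible simplex: if $\sigma_z$ and $\sigma_z'$ were two such, the simplicial complex axiom gives a common face $\delta \in P$ with $\ov{\sigma_z} \cap \ov{\sigma_z'} = \ov{\delta}$. Since $\sigma_x$ and $\sigma_y$ are contained in $\ov{\sigma_z} \cap \ov{\sigma_z'} = \ov{\delta}$, and an open simplex of $P$ contained in a closed simplex must be one of its faces, $\sigma_x$ and $\sigma_y$ are both faces of $\delta$, so $\delta$ is itself admissible. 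As $\delta$ is a face of $\sigma_z$, minimality forces $\delta = \sigma_z = \sigma_z'$.

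Once uniqueness is in hand, $\St(z,P) = \St(x,P)\cap\St(y,P)$ follows from two short verifications. The inclusion $\St(z,P) \subseteq \St(x,P)\cap\St(y,P)$ is immediate: any $\sigma$ having $\sigma_z$ as a face automatically has $\sigma_x,\sigma_y$ (which are faces of $\sigma_z$) as faces. For the reverse inclusion, given an admissible $\sigma$, I would apply the ``common face'' argument again to $\ov{\sigma_z} \cap \ov{\sigma} = \ov{\delta}$ to obtain an admissible $\delta \in P$ that is simultaneously a face of $\sigma_z$ and of $\sigma$; by minimality $\delta = \sigma_z$, hence $\sigma_z$ is a face of $\sigma$, so $z \in \ov{\sigma}$ and $\sigma \subseteq \St(z,P)$.

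The main obstacle is precisely the uniqueness of the minimal admissible simplex: a priori the ``join'' of $\sigma_x$ and $\sigma_y$ (the simplex spanned by the union of their vertex sets) need not lie in $P$, because $P$ is not required to contain all faces of its simplexes in the paper's conventions. The simplicial complex axiom, which forces the intersection of two closed simplexes to be the closure of a common face in $P$, is exactly what supplies an element of $P$ playing the role of this join and makes the argument go through.
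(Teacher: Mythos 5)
Your proof is correct and takes essentially the same route as the paper's: both select a simplex of minimal dimension lying in $\St(x,P)\cap\St(y,P)$, place $z$ in it, and verify the two inclusions by invoking the common-face axiom for $\ov{\sigma}\cap\ov{\theta}$. The uniqueness of the minimal admissible simplex that you establish is not actually needed — minimality of dimension alone drives both verification steps — and the paper accordingly omits it.
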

\begin{proof}
Let $\sigma\in P$ be a simplex of minimal dimension included in $\St(x,P)\cap\St(y,P)$
and let $z\in\sigma$. We claim that $z$ is as desired. To this aim
it suffices to show that, given $\theta\in P$, we have $\theta\subseteq\St(x,P)\cap\St(y,P)$
if and only if $\theta\subseteq\St(z,P)$. 

For one direction assume $\theta\subseteq\St(x,P)\cap\St(y,P)$. Then
$\bar{\theta}\cap\bar{\sigma}$ is non-empty, as the intersection
contains both $x$ and $y$. It follows that there is a simplex $\delta\in P$
such that $\bar{\theta}\cap\bar{\sigma}=\bar{\delta}$. Notice that
$\delta$ is included in $\St(x,P)\cap\St(y,P)$ since its closure
contains $x$ and $y$. Since $\sigma$ was of minimal dimension contained
in this intersection, it follows that $\delta=\sigma$. But then $x\in\sigma\subseteq\bar{\theta}$,
hence $\theta\subseteq\St(x,P)$. 

For the other direction, assume $\theta\subseteq\St(z,P)$, namely
$z\in\bar{\theta}$. Since $z\in\sigma$, it follows that $\sigma\subseteq\bar{\theta}$
and thefore $\bar{\sigma}\subseteq\bar{\theta}$. But $x,y$ are contained
in $\bar{\sigma}$, so they are contained in $\bar{\theta}$, witnessing
the fact that $\theta\subseteq\St(x,P)\cap\St(y,P)$. 
\end{proof}
The following result depends on the local conic structure of definable
sets. 
\begin{prop}
\label{prop:standard-p-trivial}Let $X$ be closed and bounded $\emptyset$-semialgebraic
set and let $\st:X(M)\to\st(X)=X(\R)$ be the standard part map. Then
for every $y\in X(\R)$, the preimage $\st^{-1}(y)$ is an intersection
of a countable decreasing sequence $S_{0}\supseteq S_{1}\supseteq S_{2}\supseteq\ldots$
of definably contractible open subsets of $X$. 
\end{prop}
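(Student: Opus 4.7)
My plan is to triangulate $X$ by an $\Ralg$-complex $P$ and take $S_n$ to be the $M$-realization of the ``open star'' of $y$ with respect to the $n$-th iterated barycentric subdivision $P_n$ of $P$. The required properties of $S_n$ (open, decreasing, $\bigcap_n S_n=\st^{-1}(y)$, definably contractible) will then follow from standard properties of barycentric coordinates together with the fact that $\mathrm{mesh}(P_n)\to 0$.

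For the setup: since $X$ is closed, bounded and $\emptyset$-semialgebraic, \prettyref{fact:triangulation} applied in $\R$, combined with \prettyref{prop:algebraic-subdivision} to arrange all vertices in $\Ralg$, yields a closed simplicial complex $P$ over $\Ralg$ and an $\Ralg$-semialgebraic homeomorphism $\phi_\R:|P|_\R\to X(\R)$. By transfer the same formulas define a definable homeomorphism $\phi:|P|_M\to X$, and since $\phi$ is continuous and defined over $\Ralg$ one checks on bounded points that $\st\circ\phi=\phi_\R\circ\st$. Identifying $X$ with $|P|_M$ via $\phi$, the standard part map corresponds in barycentric coordinates to the map $\sum_i\lambda_iv_i\mapsto\sum_i\st(\lambda_i)v_i$, where $v_i\in\Ralg^k$. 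Let $P_n$ be the $n$-th iterated barycentric subdivision of $P$: each $P_n$ is $\Ralg$-definable (barycenters of $\Ralg$-vertices are $\Ralg$), each refines $P_{n-1}$, and $\mathrm{mesh}(P_n)\to 0$ by the classical estimate.

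For $y\in|P|_\R$ let $\sigma_y\in P_n$ be the unique open simplex with $y\in(\sigma_y)_\R$, and set
\[
S_n:=\bigcup\{\sigma_M\suchthat\sigma\in P_n,\;\sigma_y\text{ is a face of }\sigma\}.
\]
Openness: the complement of $S_n$ in $|P|_M$ is the $M$-realization of the subcomplex of $P_n$ of simplexes not having $\sigma_y$ as a face (closed under taking faces), hence closed in $|P|_M$. Monotonicity: every open simplex of $P_{n+1}$ is contained in one of $P_n$, so $S_{n+1}\subseteq S_n$. Intersection: if $\st(b)=y$ and $b\in\sigma_M$ then the barycentric formula above gives $y\in\cl(\sigma)_\R$, so $\sigma_y\leq\sigma$ and $b\in S_n$; conversely, if $b\in S_n$ for every $n$ then for each $n$ there is $\sigma\in P_n$ with $b\in\sigma_M$ and $y\in\cl(\sigma)_\R$, and since both $\st(b)$ and $y$ lie in $\cl(\sigma)_\R$ and $\mathrm{diam}(\sigma)\le\mathrm{mesh}(P_n)\to 0$, we get $\st(b)=y$. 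Contractibility: fix $x_0\in(\sigma_y)_M$ (nonempty, e.g.\ the barycenter of $\sigma_y$, whose coefficients are rational) and consider the straight-line homotopy $H(x,t)=(1-t)x+tx_0$. If $x\in\sigma_M$ with $\sigma_y$ a face of $\sigma$, then expanding $x$ and $x_0$ in the vertices of $\sigma$ shows that the barycentric coefficients of $H(x,t)$ are strictly positive on all vertices of $\sigma$ for $t\in[0,1)$ and collapse onto the vertices of $\sigma_y$ at $t=1$, so $H$ takes values in $\sigma\cup\sigma_y\subseteq S_n$.

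The main technical subtlety is that $y\in\R^k$ need not lie in $M^k$: the ``open star of $y$'' must be defined combinatorially through the $\Ralg$-complex $P_n$ (using the real realization to decide which open simplexes contain $y$ in their closure), while the set $S_n$ is then taken as the $M$-realization of the selected finite subcomplex. This split between combinatorics (in $\R$) and geometric realization (in $M$) is what allows $S_n$ to be $\Ralg$-definable even though the combinatorial choice depends on $y$, and once it is in place the verifications above are essentially routine.
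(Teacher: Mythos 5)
Your proposal is correct and follows essentially the same route as the paper: triangulate $X$ over $\Ralg$, reduce to $X=|P|$, and take $S_n$ to be the $M$-realization of the open star of $y$ with respect to the $n$-th iterated barycentric subdivision of $P$, with mesh $\to 0$ and definable contractibility of open stars doing the rest. The only difference is that you spell out in full the verifications (openness, monotonicity, identification of the intersection, the explicit straight-line contraction) that the paper leaves implicit, and you additionally invoke \prettyref{prop:algebraic-subdivision} to justify the $\Ralg$-triangulation where the paper treats this as immediate from \prettyref{fact:triangulation} for $\emptyset$-semialgebraic sets.
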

\begin{proof}
By the triangulation theorem (\prettyref{fact:triangulation}), there
is a simplicial complex $P$ over $\Ralg$ and a $\emptyset$-definable
homeomorphism $f:X\to|P|_{M}$. In this situation, $f_{\R}:X(\R)\to P(\R)$
is a homeomorphism and $\st(f(x))=f_{\R}(\st(x))$. Thus we can replace
$X$ with $P$ and assume that $X$ is the realization of a simplicial
complex. Therefore, we now have a closed simplicial complex $X(\R)$
over the reals, which is thus locally contractible. More precisely,
given $y\in X(\R)$ we can write $\{y\}$ as an intersection $\bigcap_{i\in\N}S_{i}$
where $S_{i}$ is the open star of $y$ with respect to the $i$-th
iterated barycentric subdivision of $P$. The preimage $\st^{-1}(y)$
can then be written as the corresponding intersection $\bigcap_{i\in\N}S_{i}(M)$
interpreted in $M$, and it now suffices to observe that each $S_{i}(M)$
is an open star (\prettyref{prop:intersection-of-stars}), hence it
is definably contractible (around any of its points). 
\end{proof}
Our next goal is to show that, much of what we said about the standard
part map, has a direct analogue in the context of definable groups,
with $\pr:G\to G/G^{00}$ in the role of the standard part. 

\section{Definable groups\label{sec:Type-definable-groups}}

Let $G$ be a definable group in $M$ and let $H<G$ be a type-definable
subgroup of bounded index. We may put on the coset space $G/H$ the
logic topology, thus obtaining a compact topological space. In this
context we have a direct generalization of \prettyref{prop:preimage-open}.
\begin{fact}[{\cite[Lemma 3.2]{Pillay2004a}}]
Every type-definable subgroup $H<G$ of bounded index is clopen in
the t-topology of $G$. In particular, the natural map $\pr:G\to G/H$
is continuous, where $G$ has the t-topology and the coset space $G/H$
has the logic topology. 
\end{fact}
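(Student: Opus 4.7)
The plan is to reduce to showing $H$ is t-open at the identity, and then obtain clopen-ness and continuity of $\pr$ essentially for free. Since the t-topology turns $G$ into a topological group (Pillay 1988), left translation by any $g\in G$ is a t-homeomorphism; consequently, if $H$ is t-open at $e$ then every coset $gH$ is t-open, and $H$ is t-closed as the complement of the t-open set $\bigcup_{gH\neq H}gH$. Continuity of $\pr:G\to G/H$ then follows at once from the definition of the logic topology: the preimage of any open $O\subseteq G/H$ is a union of (t-open) cosets, hence t-open.

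To show $H$ is t-open at $e$, I would write $H=\bigcap_{i\in I}X_i$ as a small downward-directed intersection of definable symmetric supersets ($X_i=X_i^{-1}$, achieved by replacing each $X_i$ with $X_i\cap X_i^{-1}$). Since $G$ is a definable manifold in the t-topology, a local chart at $e$ yields a definable fundamental system $(B_r)_{r>0,\,r\in M}$ of t-open neighbourhoods of $e$. The task becomes: produce some $B_r$ contained in every $X_i$.

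This local step is the main obstacle. I would attack it by combining the bounded-index hypothesis $|G/H|<\kappa$, saturation, and continuity of multiplication. Arguing by contradiction: if no $B_r$ is contained in $H$, then for each $r$ one picks $g_r\in B_r\setminus H$, and each such $g_r$ lies in one of the $<\kappa$ non-identity cosets; a saturation/compactness argument in the compact Hausdorff space $G/H$ (with its logic topology) then forces a single coset $g_0 H\neq H$ to accumulate at $e$ in the t-topology. Continuity of left multiplication by $g_0^{-1}$ yields $g_0^{-1}\in\overline{H}\setminus H$, so $\overline{H}$ is a type-definable subgroup of bounded index (closures of subgroups in a topological group are subgroups) strictly larger than $H$. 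Applying \prettyref{lem:intersection-closures} to the directed family $\{X_i\}$ gives $\overline{H}=\bigcap_i\overline{X_i}$, so the enlarged $\overline{H}$ is itself type-definable with the same structural data. Iterating this argument produces either (i) a definable t-open neighbourhood of $e$ inside some $\overline{X_i}$, from which a further compactness step extracts a $B_r\subseteq H$, or (ii) a strictly ascending chain of type-definable bounded-index subgroups of length $\kappa$, contradicting saturation.

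Pinning down this dichotomy cleanly — and in particular verifying that the "accumulation coset" $g_0H$ can really be extracted uniformly via saturation from the family of $g_r$'s — is where the main technical effort would go; everything else is bookkeeping built on top of the results of Section 3 and the Pillay 1988 theorem.
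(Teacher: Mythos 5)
The paper states this as a Fact cited to Pillay's Lemma~3.2 and gives no proof, so I compare to the standard argument. Your outer framing is correct — once $H$ is $t$-open at $e$, homogeneity makes $H$ $t$-open, the complement is a union of $t$-open cosets so $H$ is $t$-closed, and continuity of $\pr$ is immediate — but the local step is where the content lies, and the route you sketch does not work. The family $(g_r)_{r>0}$ is indexed by $M_{>0}$, of size $\geq\kappa$ rather than $<\kappa$, so saturation gives no grip on it; compactness of $G/H$ in the \emph{logic} topology produces limit points of the net $(\pr(g_r))$ in that topology but says nothing about $t$-topological accumulation, which is what you would need in order to put $g_0^{-1}$ in $\overline H$; and the closing dichotomy does not close, since $\overline H$ is already $t$-closed, so iterating the closure stabilizes after one step and yields no chain (and even branch~(i) would only give $e$ interior to a single $\overline{X_i}$, whereas you need $e$ interior to \emph{every} $X_i$, and then \prettyref{lem:intersection-open} to make the intersection of interiors open).

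The ingredient that is missing — and the only place where bounded index actually enters — is an o-minimal dimension count. Write $H=\bigcap_{i}X_i$ with the $X_i$ definable, symmetric, downward directed, and (by saturation, from $HHH^{-1}=H\subseteq X_i$) such that for each $i$ there is $j$ with $X_jX_jX_j^{-1}\subseteq X_i$. Each $X_i$ has $\dim X_i=\dim G$: otherwise a small transversal $T$ for $G/H$ gives $G=\bigcup_{t\in T}tX_i$, saturation extracts a finite subcover, and then $\dim G=\dim X_i<\dim G$, absurd. In particular each $X_j$ has non-empty $t$-interior; taking a $t$-interior point $a$ of $X_j$ and a $t$-open $V$ with $a\in V\subseteq X_j$, for every $h\in H\subseteq X_j$ the $t$-open set $hVa^{-1}$ contains $h$ and lies in $X_jX_jX_j^{-1}\subseteq X_i$. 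Hence $H\subseteq\interior(X_i)$ (interior in the $t$-topology) for every $i$, so $H=\bigcap_i\interior(X_i)$ is a small downward-directed intersection of $t$-open definable sets and is $t$-open by \prettyref{lem:intersection-open}. A purely point-set compactness argument that never counts dimension has no way to exploit the bounded-index hypothesis.
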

If we further assume that $H$ is normal, then $G/H$ is a group and
we may ask whether the logic topology makes it into a topological
group. This is indeed the case \cite{Pillay2004a}. Some additional
work shows that in fact $G/H$ is a compact real Lie group \cite{Berarducci2005}.
In the same paper the authors show that $G$ admits a smallest type-definable
subgroup $H<G$ of bounded index (see \cite{Shelah2008} for a different
proof), which is denoted $G^{00}$ and called the \textbf{infinitesimal
subgroup}. When $G$ is \textbf{definably compact} in the sense of
\cite{Peterzil1999}, the natural map $\pr:G\to G/G^{00}$ shares
may of the properties of the standard part map. 
\begin{defn}
\label{def:ball}Let us recall that a definable set $B\subseteq X$
is called a \textbf{definable open ball} of dimension $n$ if $B$
is definably homeomorphic to $\{x\in M^{n}\suchthat|x|<1\}$; a \textbf{definable
closed ball }is defined similarly, using the weak inequality $\leq$;
we shall say that $B$ is a \textbf{definable proper ball }if there
is a definable homeomorphism $f$ from $\ov B$ to a definable closed
ball taking $\partial B$ to the definable sphere $S^{n-1}$.
\end{defn}
In analogy with \prettyref{prop:standard-p-trivial}, the following
holds. 
\begin{fact}
\label{fact:G00-contractible}\cite[Theorem 2.2]{Berarducci2009a}Let
$G$ be a definably compact group of dimension $n$ and put on $G$
the t-topology of \cite{Pillay1988}. Then there is a decreasing sequence
$S_{0}\supseteq S_{1}\supseteq S_{2}\supseteq\ldots$ of definably
contractible subsets of G such that $G^{00}=\bigcap_{i\in\N}S_{i}$. 
\end{fact}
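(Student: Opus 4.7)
The plan is to transport the argument of \prettyref{prop:standard-p-trivial} to the group setting, with $G^{00}$ taking the role of a standard-part fibre. By \cite{Pillay1988} the t-topology turns $G$ into a definable manifold, so in particular it is a definable space; applying the triangulation theorem (\prettyref{fact:triangulation}) I obtain a closed simplicial complex $P$ and a definable homeomorphism $f:G\to|P|_{M}$ (closedness of $|P|$ is ensured by the definable compactness of $G$). After one more refinement I can arrange that $f(e)$ is a vertex $v$ of $P$, and by \prettyref{prop:algebraic-subdivision} I can take $P$ to be defined over $\Ralg$.

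Next I set
\[
S_{i}:=f^{-1}\bigl(\St(v,P^{(i)})_{M}\bigr),
\]
where $P^{(0)}=P,P^{(1)},P^{(2)},\ldots$ are the iterated barycentric subdivisions of $P$. Each $S_{i}$ is definable; as the $f$-preimage of the open star of $v$ in $P^{(i)}$, it is definably contractible to $e$ by pulling back through $f^{-1}$ the straight-line contraction of that open star to $v$. The sequence $(S_{i})_{i\in\N}$ is decreasing, because $P^{(i+1)}$ refines $P^{(i)}$ and therefore shrinks the open star of $v$.

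It remains to establish $\bigcap_{i\in\N}S_{i}=G^{00}$, which I split in two steps. First, because $v$ has real algebraic coordinates, the elementary fact that $\bigcap_{i}\St(v,P^{(i)})_{\R}=\{v\}$ transfers, via $|P|_{M}\subseteq M^{N}$ and the coordinate-wise standard part, to $\bigcap_{i}\St(v,P^{(i)})_{M}=\st^{-1}(v)$; pulling back by $f$ identifies $\bigcap_{i}S_{i}$ with the t-topology infinitesimal neighbourhood $\mu(e):=\bigcap\{U\suchthat U\text{ definable t-open neighbourhood of }e\}$ of $e$ in $G$. The main step is then to prove
\[
\mu(e)=G^{00}.
\]
The inclusion $G^{00}\subseteq\mu(e)$ is the easier half, once one verifies that $\mu(e)$ is a type-definable subgroup of bounded index: the subgroup property follows from continuity of multiplication and inversion in the t-topology, while boundedness of the index uses definable compactness of $G$ to force $G/\mu(e)$ to be a compact Hausdorff space of bounded cardinality, so that the minimality of $G^{00}$ applies. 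The reverse inclusion $\mu(e)\subseteq G^{00}$ — equivalently, that $G^{00}$ is contained in every definable t-open neighbourhood of $e$ — is the genuine obstacle: it demands a global, compactness-driven argument exploiting that finitely many left translates of any definable neighbourhood of $e$ cover $G$, together with the bounded-index property of $G^{00}$. This is precisely the content of \cite[Thm.\,2.2]{Berarducci2009a}, which I would invoke (or reproduce in the same spirit) to close the proof.
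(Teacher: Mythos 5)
This statement is presented in the paper as a \emph{Fact}, cited from \cite[Theorem~2.2]{Berarducci2009a}, and the paper does not prove it; the only comment the paper makes is that the cited proof ``depends on compact domination and the sets $S_n$ are taken to be cells.'' So there is no paper proof against which to compare yours on a step-by-step basis, and your attempt to give an actual argument is therefore ambitious. Unfortunately, the argument has several genuine gaps.

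First, the construction of the sets $S_i = f^{-1}(\St(v,P^{(i)})_M)$ is fine as far as it goes: the sequence is decreasing and each $S_i$ is a definably contractible definable open set. This part is even a mildly different construction from the cited paper's (which uses o-minimal cells rather than open stars of iterated barycentric subdivisions). But the entire content of the fact is the equality $\bigcap_i S_i = G^{00}$, and this is where the proposal breaks down. What the intersection actually equals is $f^{-1}(\st^{-1}(v))$ — the halo of $e$ \emph{in the particular chart $f$} — not the set $\mu(e) := \bigcap\{U : U\text{ a definable t-open neighbourhood of }e\}$ that you introduce. The latter quantifies over \emph{all} definable t-open neighbourhoods of $e$, with arbitrary parameters from the saturated model, and is in general strictly smaller: in any positive-dimensional $G$ one can take a definable neighbourhood $U_\varepsilon$ of $e$ of ``radius'' an infinitesimal $\varepsilon$, and then $G^{00}\not\subseteq U_\varepsilon$, so $\mu(e)\subsetneq G^{00}$. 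Thus the middle link of your chain, $\bigcap_i S_i = \mu(e) = G^{00}$, is false on both ends. You also reverse the logic at the crucial point: you write that ``$\mu(e)\subseteq G^{00}$'' is ``equivalently, that $G^{00}$ is contained in every definable t-open neighbourhood of $e$'', but the latter is precisely $G^{00}\subseteq\mu(e)$, the opposite inclusion.

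Finally, the step you label as ``the genuine obstacle'' is closed by invoking \cite[Theorem~2.2]{Berarducci2009a} — which is the statement being proved. This makes the argument circular. The honest way to handle this item is the way the paper does: state it as a known fact with a citation. If one wanted to prove it from scratch, compact domination (or its measure-theoretic precursors) is what makes the identification of $\bigcap_i S_i$ with $G^{00}$ go through, and no amount of rearranging the combinatorics of stars and subdivisions will substitute for that input.
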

The proof in \cite[Theorem 2.2]{Berarducci2009a} depends on compact
domination and the sets $S_{n}$ are taken to be ``cells'' in the
o-minimal sense. For later purposes we need the following strengthening
of the above fact, which does not present difficulties, but requires
a small argument. 
\begin{cor}
\label{cor:G00-balls}In \prettyref{fact:G00-contractible} we can
arrange so that, for each $i\in\N$ $\ov S_{i+1}\subseteq S_{i}$
and $S_{i}$ is a definable proper ball of dimension $n=\dim(G)$. 
\end{cor}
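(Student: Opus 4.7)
The plan is to refine the sequence $(T_i)_{i\in\N}$ furnished by \prettyref{fact:G00-contractible}. As noted right after that fact, by the proof of \cite[Theorem 2.2]{Berarducci2009a} each $T_i$ can be taken to be an open $n$-cell, hence a definable open ball of dimension $n$ in the sense of \prettyref{def:ball}. Two further properties must be arranged: the nested closures $\ov{S_{i+1}}\subseteq S_i$, and the upgrade from ``open ball'' to ``proper ball''.

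For the nested closures, recall that $G^{00}$ is clopen in the t-topology by \cite[Lemma 3.2]{Pillay2004a}. Since $(T_i)$ is a small downward directed family of definable sets with clopen intersection $G^{00}$, \prettyref{lem:closure-interior} provides, for every $i$, some $j>i$ with $\ov{T_j}\subseteq\interior(T_i)=T_i$. Passing to a subsequence we may therefore assume $\ov{T_{i+1}}\subseteq T_i$ for all $i$, so that $\ov{T_{i+1}}$ is a definably compact subset of $T_i$.

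The proper balls come from the following observation, which I intend to use as a black box: if $V\subseteq G$ is a definable open ball with definable homeomorphism $h\colon V\to M^n$ and $K\subseteq V$ is definably compact, then $h(K)$ is bounded, say $h(K)\subseteq\{|x|\le R\}$, and $B:=h^{-1}(\{|x|<R+1\})$ is a definable proper ball of dimension $n$ containing $K$, with closure $\ov{B}=h^{-1}(\{|x|\le R+1\})\subseteq V$ (the right-hand side being definably compact in $G$, hence closed, and agreeing with $\ov{B}$ by continuity of $h^{-1}$). I then construct $(S_i)$ and indices $j(i)$ inductively: take $S_0$ from the observation applied to $V=T_0$, $K=\ov{T_1}$; given $S_{i-1}$, which is a definable open set containing $G^{00}$, a saturation argument (using that $G^{00}=\bigcap_j T_j$ is type-definable and $G\setminus S_{i-1}$ is definable closed) yields $j(i)>j(i-1)$ with $T_{j(i)}\subseteq S_{i-1}$, and applying the observation to $V=T_{j(i)}$, $K=\ov{T_{j(i)+1}}$ produces $S_i$ with $G^{00}\subseteq\ov{T_{j(i)+1}}\subseteq S_i\subseteq T_{j(i)}$ and $\ov{S_i}\subseteq T_{j(i)}\subseteq S_{i-1}$. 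Since $S_i\subseteq T_{j(i)}$ and $j(i)\to\infty$, we also recover $\bigcap_i S_i=G^{00}$.

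The main obstacle is ensuring that $h(G^{00})$ lies in a bounded region of $M^n$; a priori $h(G^{00})$ could fail to be bounded, and then the concentric-ball trick would not enclose it. This is precisely the purpose of the nested closures of Step~1: the sandwich $G^{00}\subseteq\ov{T_{j(i)+1}}\subseteq T_{j(i)}$ with $\ov{T_{j(i)+1}}$ definably compact forces $h(\ov{T_{j(i)+1}})$, and thus $h(G^{00})$, to lie inside a definable bounded region of $M^n$, making the construction go through.
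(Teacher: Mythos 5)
Your proof is correct and follows essentially the same route as the paper: invoke \prettyref{lem:closure-interior} for the nested closures, use that the $T_i$ from \prettyref{fact:G00-contractible} may be taken to be $n$-cells and hence definable open balls, and then shrink concentrically in a chart to manufacture proper balls sandwiched between successive terms. Your write-up is more explicit than the paper's --- in particular you isolate and justify the point that $\ov{T_{j(i)+1}}$ is definably compact so that its image in the chart $M^n$ is bounded, which is the step the paper compresses into ``by shrinking concentrically $S_i$ via the homeomorphism'' --- but the underlying argument is the same.
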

\begin{proof}
By \prettyref{lem:closure-interior} we can assume that $\ov S_{i+1}\subseteq S_{i}$
for every $i\in\N$. Since $M$ has field operations, a cell is definably
homeomorphic to a definable open ball (first show that it is definably
homeomorphic to a a product of intervals). In general it is not true
that a cell is a definable proper ball, even assuming that the cell
is bounded \cite{Berarducci2009c}. However by shrinking concentrically
$S_{i}$ via the homeomorphism, we can find a definable proper $n$-ball
$C_{i}$ with $\ov S_{i+1}\subseteq C_{i}\subseteq S_{i}$. To conclude,
it suffices to replace $S_{i}$ with the interior of $C_{i}$. 
\end{proof}

\section{Compact domination}

A deeper analogy between the standard part map and the projection
$\pr:G\to G/G^{00}$ is provided by \prettyref{fact:additive-measure}
and \prettyref{fact:compact-domination} below. 
\begin{fact}[{\cite[Cor.\ 4.4]{Berarducci2004b}}]
\label{fact:additive-measure}Let $X$ be a closed and bounded $\emptyset$-semialgebraic
subset of $M^{k}$ and let $D$ be a definable subset of $X$. Then
$\st(D)\cap\st(D^{\complement})\subseteq\R^{k}$ has Lebesgue measure
zero. 
\end{fact}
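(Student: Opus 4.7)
The plan is to reduce the statement to two assertions. First: $\st(D)\cap\st(D^\complement)\subseteq\st(\partial D)$, where $\partial D:=\cl(D)\cap\cl(X\setminus D)$ is the topological frontier of $D$ in $X$. Second: for any bounded definable $A\subseteq M^k$ with $\dim_M(A)<k$, $\st(A)$ has zero Lebesgue measure in $\R^k$. Since the standard frontier-dimension inequality $\dim(\cl(B)\setminus B)<\dim(B)$ gives $\dim_M(\partial D)<k$, applying the second assertion to $\partial D$ and combining with the first yields the conclusion.

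For the first assertion, fix $y\in\st(D)\cap\st(D^\complement)$ and pick $a\in D$, $b\in D^\complement$ with $\st(a)=\st(b)=y$. By \prettyref{prop:standard-p-trivial}, write $\st^{-1}(y)=\bigcap_{i\in\N}S_i$ as a decreasing intersection of definably contractible open subsets of $X$; then $a,b\in S_i$ for every $i$. Definable contractibility yields a definable path $\gamma_i\colon[0,1]\to S_i$ from $a$ to $b$, and $\gamma_i^{-1}(D)$ is a definable subset of $[0,1]$ containing $0$ but not $1$. By o-minimality it has finite topological frontier in $[0,1]$, and any such frontier point $t_i$ satisfies $\gamma_i(t_i)\in\cl(D)\cap\cl(X\setminus D)=\partial D$ by continuity of $\gamma_i$. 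Hence each $S_i\cap\partial D$ is a non-empty definable set; since the family is decreasing, saturation yields $\st^{-1}(y)\cap\partial D=\bigcap_i(S_i\cap\partial D)\neq\emptyset$, so $y\in\st(\partial D)$.

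For the second assertion, decompose $A$ into finitely many cells of dimension $d<k$; after permuting coordinates, each such cell is the graph of a definable continuous function $f\colon U\to M^{k-d}$ on an open cell $U\subseteq M^d$. A covering argument — using the o-minimal Lipschitz cell decomposition available in o-minimal expansions of a field, together with the fact that $\st$ is $1$-Lipschitz on $[-n,n]^k$ — shows that $\st$ of such a graph can be covered, for every sufficiently small $\delta>0$, by $O(\delta^{-d})$ Euclidean balls of radius $O(\delta)$, giving finite $d$-dimensional Hausdorff measure in $\R^k$ and hence zero $k$-dimensional Lebesgue measure.

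The technical heart is the second assertion: one must control the Lipschitz constants of the cell functions in $M$ well enough that covering estimates transfer to genuine Euclidean estimates in $\R^k$ via the standard part map, which requires repeatedly refining the cell decomposition until the relevant bounds become standard-finite. By contrast, the first assertion is essentially a definable path-connectedness plus saturation argument, once \prettyref{prop:standard-p-trivial} is in hand.
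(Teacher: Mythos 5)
This statement is cited in the paper as a \emph{Fact}, attributed to \cite[Cor.\ 4.4]{Berarducci2004b}, and the paper itself gives no proof; so there is no ``paper's proof'' to compare against, only the source. Your two-step reduction — first $\st(D)\cap\st(D^{\complement})\subseteq\st(\partial D)$, then ``$\st$ of a bounded definable set of dimension $<k$ has measure zero'' — is indeed the shape of the argument in the cited reference. Your path-connectedness-plus-saturation proof of the first inclusion is correct and is a clean alternative to the original; note only that $\partial D=\cl(D)\cap\cl(X\setminus D)$ is the \emph{boundary}, not the frontier, and the frontier-dimension inequality $\dim(\cl B\setminus B)<\dim B$ must be applied to both $B=D$ and $B=X\setminus D$ (using $\partial D=(\cl D\setminus D)\cup(\cl(X\setminus D)\setminus(X\setminus D))$) to conclude $\dim(\partial D)<k$; as stated, applying it only to $D$ misses the part of the boundary lying inside $D$.

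The genuine gap is the second assertion, which is not a technicality but the entire content of the result, and your sketch does not discharge it. The obstruction you flag is real: the cell functions produced by cell decomposition over a saturated $M$ will generically have nonstandard (infinite) Lipschitz constants, and without controlling them the covering count $O(\delta^{-d})$ by balls of radius $O(\delta)$ is simply false. Invoking ``o-minimal Lipschitz cell decomposition'' is not a fix as written: the available Lipschitz/$\Lambda$-regular decomposition theorems use orthogonal (not merely coordinate-permuting) changes of variables, and one must check both that such a theorem transfers to an arbitrary real closed field and that the resulting Lipschitz constants are standard — neither point is addressed. In \cite{Berarducci2004b} this step is handled by a different route (a Fubini-type induction on $k$ using the o-minimal monotonicity and fibre-dimension theorems, bounding the measure of $\st$-images of cells fibrewise), which sidesteps Lipschitz control entirely. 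As it stands, your argument proves the inclusion $\st(D)\cap\st(D^{\complement})\subseteq\st(\partial D)$ and reduces the statement to the measure-zero claim, but leaves that claim essentially open.
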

The above fact was used in \cite{Berarducci2004b} to introduce a
finitely additive measure on definable subsets of $[-n,n]^{k}\subseteq M^{k}$
($n\in\N$) by lifting the Lebesgue measure on $\R^{k}$ through the
standard part map. In the same paper it was conjectured that, reasoning
along similar lines, one could try to introduce a finitely additive
invariant measure on definably compact groups (the case of the torus
being already handled thanks to the above result). When \cite{Berarducci2004b}
was written, Pillay's conjectures from \cite{Pillay2004a} were still
open, and it was hoped that the measure approach could lead to a solution.
A first confirmation to the existence of invariant measures came from
\cite{Peterzil2007b}, but only for a limited class of definable group.
A deeper analysis lead to the existence of invariant measures in every
definable compact group \cite{Hrushovski2007a} and to the solution
of Pillay's conjectures, as discussed in the introduction. Finally,
the following far reaching result was obtained, which can be considered
as a direct analogue to \prettyref{fact:additive-measure}. 
\begin{fact}[\cite{Hrushovski2011a}]
\label{fact:compact-domination}Let $G$ be a definably compact group
and consider the projection $\pr:G\to G/G^{00}$. Then for every definable
set $D\subseteq G$, $\pr(D)\cap\pr(D^{\complement})$ has Haar measure
zero. 
\end{fact}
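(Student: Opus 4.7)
The plan is to mimic the route taken for the standard part in \prettyref{fact:additive-measure}: build a finitely additive left-invariant probability measure $\mu$ on the Boolean algebra of definable subsets of $G$, push it forward via $\pr:G\to G/G^{00}$ to a Borel probability measure $\pr_{*}\mu$ on the compact Lie group $G/G^{00}$, identify $\pr_{*}\mu$ with the Haar measure, and then show that for every definable $D\subseteq G$ one has $\mu(D)=\pr_{*}\mu(\pr(D))$, from which the measure-zero statement follows. The key point is that if $\pr(D)\cap\pr(D^{\complement})$ has positive Haar measure then, letting $E=\pr(D)\cap\pr(D^{\complement})$, both $D\cap\pr^{-1}(E)$ and $D^{\complement}\cap\pr^{-1}(E)$ would have to carry positive $\mu$-mass, contradicting the "smoothness" of $\mu$ as a lift of $\pr_{*}\mu$.

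To produce $\mu$ I would first invoke that definably compact groups in o-minimal expansions of a field are \emph{fsg}, i.e.\ have a global generic type finitely satisfiable in a small model (this is \cite[Thm.~8.1]{Hrushovski2007a}). The fsg property gives existence and uniqueness of a left-invariant Keisler measure $\mu$ on definable subsets of $G$. Next, the infinitesimal subgroup $G^{00}$ is exactly the $\mu$-stabilizer of the generic type, which identifies $\pr_{*}\mu$ with the normalized Haar measure of the compact Lie group $G/G^{00}$ (this uses \cite{Berarducci2005} and \cite{Pillay2004a}).

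Finally, to pass from "$\mu$ is invariant" to compact domination I would use the NIP machinery developed by Simon: an fsg invariant measure in an o-minimal (more generally, distal/NIP) theory is \emph{smooth}, meaning it has a unique extension to every elementary extension. Smoothness is equivalent to saying that, for every definable $D$, the boundary $\partial_{\mu}D$ on $G/G^{00}$ (the set of $g$ where the fiber $\pr^{-1}(g)$ meets both $D$ and $D^{\complement}$) has $\pr_{*}\mu$-measure zero; but this boundary is exactly $\pr(D)\cap\pr(D^{\complement})$, giving the conclusion.

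The main obstacle is the last step: proving that the invariant measure $\mu$ is actually smooth. One needs either to invoke Simon's theorem that in o-minimal theories fsg implies the existence of a smooth left-invariant measure \cite{Simon2014a} and his derivation of compact domination from smoothness \cite{Simon2015}, or alternatively to run the original Hrushovski--Peterzil--Pillay argument that decomposes $G$ into abelian and semisimple parts and handles each case separately (with the abelian case hinging on torsion-counting via \cite{Edmundo2004}). Either route is technically heavy; everything before the smoothness step is comparatively formal once the logic topology and the fact $G/G^{00}$ is a compact Lie group are in hand.
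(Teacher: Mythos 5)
The paper does not prove this statement; it is presented as a Fact cited to \cite{Hrushovski2011a}, and the introduction (Section 1) sketches the alternative route due to Simon, which is precisely what you propose: definably compact implies fsg by \cite[Thm.~8.1]{Hrushovski2007a}, fsg gives a unique left-invariant Keisler measure, \cite{Simon2014a} shows this measure is smooth in the o-minimal case, and \cite{Simon2015} derives compact domination from smoothness. So your high-level plan is sound and agrees with the proof strategy the paper itself points to.

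Two local cautions. First, smoothness of a Keisler measure does \emph{not} mean that the $\pr$-boundaries of definable sets have Haar measure zero; it means the measure has a unique extension to every elementary extension of the ambient model. The implication ``smooth left-invariant measure $\Rightarrow$ compact domination'' is a genuine theorem of \cite{Simon2015}, not a reformulation of the definition, and treating it as a reformulation papers over the hardest step. Second, the heuristic that positive Haar measure of $E:=\pr(D)\cap\pr(D^{\complement})$ would force both $D\cap\pr^{-1}(E)$ and $D^{\complement}\cap\pr^{-1}(E)$ to carry positive $\mu$-mass is not a contradiction as stated (both can be positive and sum to $\mu(\pr^{-1}(E))$), and those sets are in any case not definable so $\mu$ is a priori not defined on them; the actual argument in \cite{Simon2015} is more delicate. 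As a small simplification, identifying $\pr_{*}\mu$ with Haar measure on $G/G^{00}$ needs only left-invariance of $\mu$ together with the fact that $\pr$ is a group homomorphism onto a compact group; the digression about $G^{00}$ being the $\mu$-stabilizer of the generic type is not needed for that step. You do correctly flag where the technical weight lies and correctly name the two available routes (Simon's, or the original Hrushovski--Pillay decomposition into semisimple and abelian parts).
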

In the terminology introduced in \cite{Hrushovski2007a} the above
result can be described by saying that $G$ is \textbf{compactly dominated
}by $G/G^{00}$. Perhaps surprisingly, when the above result was obtained,
Pillay's conjectures had already been solved, so compact domination
did not actually play a role in its solution. In hindsight however,
as we show in the last part of this paper (\prettyref{sec:C}), compact
domination can in fact be used to prove ``$\dim(G)=\dim_{\R}(G/G^{00})$'',
as predicted by Pillay's conjectures (the content of Pillay's conjectures
also includes the statement that $G/G^{00}$ is a real Lie group). 

To prepare the ground, we introduce the following definition. In the
rest of the section $E$ is a type-definable equivalence relation
of bounded index on a definable set $X$. 
\begin{defn}
\label{def:top-comp-dom1}We say that $X$ is \textbf{topologically
compactly dominated} by $X/E$ if for every definable set $D\subseteq X$,
$\pr(D)\cap\pr(D^{\complement})$ has empty interior, where $\pr:X\to X/E$
is the projection.
\end{defn}
Since ``measure zero'' implies ``empty interior'', topological
compact domination holds both for the standard part map (taking $E=\ker(\st)$)
and for definably compact groups. 

Notice that \prettyref{def:top-comp-dom1} can be given for definable
sets in arbitrary theories, not necessarily o-minimal, so it is not
necessary that $X$ carries a topology. However in the o-minimal case
a simpler formulation can be given, as in \prettyref{cor:top-compact-dom2}
below. 

We first recall some definitions. Let $X$ be a definable space. We
say that a type-definable set $Z\subseteq X$ is \textbf{definably
connected} if it cannot be written as the union of two non-empty open
subsets which are relatively definable, where a relatively definable
subset of $Z$ is the intersection of $Z$ with a definable set. 

Following \cite{vdDries1998}, we distinguish between the frontier
and the boundary of a definable set, and we write $\partial D:=\ov D\setminus D$
for the \textbf{frontier}, and $\bd{D:=\ov D\setminus D^{\circ}}$
for the \textbf{boundary}, where $D^{\circ}$ is the interior. 

A basic result in o-minimal topology is that the dimension of the
frontier of $D$ is less than the dimension of $D$. Here we shall
however be concerned with the boundary, rather than the frontier. 
\begin{prop}
\label{prop:type-boundary}Let $X$ be a definable space. Assume that
$\pr$ is continuous and each fiber of $\pr:X\to X/E$ is definably
connected. Then for every definable set $D\subseteq X$, $\pr(D)\cap\pr(D^{\complement})=\pr(\bd{}D)$. 
\end{prop}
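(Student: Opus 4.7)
The plan is to prove the two inclusions separately, using \prettyref{prop:proj-of-closure} for one direction and the definable connectedness of the fibers for the other.

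First I would record the elementary topological identity $\bd D = \ov D \cap \ov{D^\complement}$ (which holds because $\ov{D^\complement} = X \setminus D^\circ$). This lets us pass back and forth between the boundary and the two closures.

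For the inclusion $\pr(\bd D) \subseteq \pr(D) \cap \pr(D^\complement)$: if $x \in \bd D$, then $x \in \ov D$ and $x \in \ov{D^\complement}$, so $\pr(x) \in \pr(\ov D) \cap \pr(\ov{D^\complement})$. Both $D$ and $D^\complement$ are definable, so by \prettyref{prop:proj-of-closure} (which is available since $\pr$ is continuous) these equal $\pr(D) \cap \pr(D^\complement)$.

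The reverse inclusion is the substantive part. Let $y \in \pr(D) \cap \pr(D^\complement)$ and set $F := \pr^{-1}(y)$. Assume for a contradiction that $F \cap \bd D = \emptyset$. Because $\bd D \supseteq D \setminus D^\circ$ and $\bd D \supseteq D^\complement \setminus (D^\complement)^\circ$, the assumption forces
\[
F \cap D \subseteq D^\circ \qquad \text{and} \qquad F \cap D^\complement \subseteq (D^\complement)^\circ = X \setminus \ov D.
\]
Since $y \in \pr(D)$ and $y \in \pr(D^\complement)$, both $F \cap D$ and $F \cap D^\complement$ are non-empty, hence so are $F \cap D^\circ$ and $F \cap (X \setminus \ov D)$. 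These two sets are disjoint, cover $F$, are open in $F$, and are relatively definable in $F$ (using that in an o-minimal structure the interior and closure of a definable set are definable). This contradicts the assumption that $F$ is definably connected. Therefore there exists $x \in F \cap \bd D$, giving $y = \pr(x) \in \pr(\bd D)$.

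The main (and really only) obstacle is ensuring that both pieces of the decomposition of $F$ are non-empty; this is exactly where one needs the assumption $F \cap \bd D = \emptyset$ to push $F \cap D$ inside $D^\circ$ and $F \cap D^\complement$ inside $(D^\complement)^\circ$. Everything else is routine manipulation using \prettyref{prop:proj-of-closure} and the o-minimal fact that interior and closure preserve definability.
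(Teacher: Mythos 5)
Your proof is correct and follows essentially the same route as the paper: the easy inclusion via \prettyref{prop:proj-of-closure}, and the substantive inclusion by assuming $\pr^{-1}(y)\cap\bd D=\emptyset$ and deriving a contradiction with definable connectedness of the fiber from the relatively definable open partition $\{F\cap D^\circ,\, F\cap(D^\complement)^\circ\}$. You spell out more explicitly than the paper why both pieces are non-empty and why they cover $F$, but the argument is the same.
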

\begin{proof}
We prove $\pr(D)\cap\pr(D^{\complement})\subseteq\pr(\bd D)$. So
let $y\in\pr(D)\cap\pr(D^{\complement})$. If for a contradiction
$\pr^{-1}(y)\cap\bd D=\emptyset$, then $\pr^{-1}(y)\cap D^{\circ}$
and $\pr^{-1}(y)\cap(D^{\complement})^{\circ}$ are both non-empty.
Since they are relatively definable in $\pr^{-1}(y)$ and open, we
contradict the hypothesis that $\pr^{-1}(y)$ is definably connected.
The opposite inclusion is easy, using the fact that $\pr(A)=\pr(\ov A)$
(\prettyref{prop:proj-of-closure}).
\end{proof}
In the light of the above proposition, topological compact domination
takes the following form.
\begin{cor}
\label{cor:top-compact-dom2}Assume that $X$ is a definable space,
$\pr:X\to X/E$ is continuous, and each fiber of $\pr$ is definably
connected. Then $X$ is topologically compactly dominated by $X/E$
if and only if the image $\pr(Z)$ of any definable set $Z\subseteq X$
with empty interior, has empty interior. 
\end{cor}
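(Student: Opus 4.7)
The plan is to use Proposition~\ref{prop:type-boundary} as the pivot. Under the standing hypotheses (continuity of $\pr$ and definable connectedness of the fibers), that proposition gives $\pr(D)\cap\pr(D^{\complement})=\pr(\bd D)$ for every definable $D\subseteq X$, so topological compact domination is equivalent to the assertion that $\pr(\bd D)$ has empty interior for every definable $D$. The corollary then reduces to matching up the family $\{\bd D\}$, as $D$ ranges over definable subsets of $X$, with the family of \emph{all} definable subsets of $X$ having empty interior.

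For the ``$\Rightarrow$'' direction, I would take a definable $Z\subseteq X$ with empty interior and set $D=Z$ in Proposition~\ref{prop:type-boundary}. Since $Z^{\circ}=\emptyset$, $\bd Z=\ov Z$, and Proposition~\ref{prop:proj-of-closure} (applicable because $\pr$ is continuous) yields $\pr(\ov Z)=\pr(Z)$. Combining, $\pr(Z)\cap\pr(Z^{\complement})=\pr(Z)$, which has empty interior by the topological compact domination hypothesis.

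For the ``$\Leftarrow$'' direction, let $D\subseteq X$ be definable. It suffices to show that $\bd D$ is a definable set with empty interior, since then the right-hand hypothesis forces $\pr(\bd D)=\pr(D)\cap\pr(D^{\complement})$ to have empty interior. Definability of $\bd D$ is immediate, as the closure and interior of a definable set in a definable space are again definable. For empty interior, decompose $\bd D=(\ov D\setminus D)\cup(D\setminus D^{\circ})$: the first piece is the frontier of $D$, while the second equals $D\cap\ov{D^{\complement}}\subseteq\ov{D^{\complement}}\setminus D^{\complement}$, the frontier of $D^{\complement}$. By the standard o-minimal frontier dimension inequality, both pieces have dimension strictly less than $\dim X$, so $\dim(\bd D)<\dim X$ and $\bd D$ has empty interior.

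The one step requiring actual work is the empty-interior property of $\bd D$ in the ``$\Leftarrow$'' direction; this is where o-minimality is genuinely used. In a purely topological setting boundaries may perfectly well have non-empty interior (e.g.\ $\bd\Q=\R$ inside $\R$), and it is only the frontier dimension inequality for definable sets that rules this out. Everything else is a short formal manipulation of Propositions~\ref{prop:type-boundary} and~\ref{prop:proj-of-closure}.
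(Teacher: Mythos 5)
Your proof is correct and follows essentially the same route as the paper, pivoting on Proposition~\ref{prop:type-boundary} and Proposition~\ref{prop:proj-of-closure} in both directions. The only added content is that you spell out, via the o-minimal frontier dimension inequality, why $\bd D$ has empty interior -- a step the paper asserts without comment -- and your observation that this genuinely uses o-minimality (cf.\ $\bd\Q=\R$) is exactly the right remark.
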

\begin{proof}
Suppose that the image of every definable set with empty interior
has empty interior. Given a definable set $D\subseteq X$, we want
to show that $\pr(D)\cap\pr(D^{\complement})$ has empty interior.
This follows from the inclusion $\pr(D)\cap\pr(D^{\complement})\subseteq\pr(\delta D)$
(\prettyref{prop:type-boundary}) and the fact that $\delta D$ has
empty interior.

Conversely, assume topological compact domination and let $Z$ be
a definable subset of $X$ with empty interior. By \prettyref{prop:proj-of-closure},
$\pr(\delta Z)\subseteq\pr(\ov Z)\cap\pr(\ov{Z^{\complement}})=\pr(Z)\cap\pr(Z^{\complement})$,
so $\pr(\delta Z)$ has empty interior. 
\end{proof}

\section{Good covers}

By a \textbf{triangulable space} we mean a compact topological space
which is homeomorphic to a polyhedron, namely to the realization $|P|_{\R}$
of a closed finite simplicial complex over $\R$.

\begin{defn}
Let $\U$ be an open cover of a topological space $Y$. We say that
$\U$ is a \textbf{good cover} if every finite intersection $U_{1}\cap\ldots\cap U_{n}$
of open sets $U_{1},\ldots,U_{n}\in\U$ is contractible.
\end{defn}
Our aim is to show that open subsets of a triangulable spaces have
enough good covers. We are going to use barycentric subdivions holding
a subcomplex fixed, as defined in \cite[p.\ 90]{Munkres1984}. We
need the following observation. 
\begin{rem}
\label{rem:relative-barycentric}Let $P$ be a closed (finite) simplicial
complex and let $L$ be a closed subcomplex. Let $P_{i}$ be the $i$-th
barycentric subdivision of $P$ holding $L$ fixed. Then for every
real number $\eps>0$ there is $i\in\N$ such that for every closed
simplex $\bar{\sigma}$ of $P_{i}$, either $\bar{\sigma}$ has diameter
$<\eps$ or $\bar{\sigma}$ lies inside the $\eps$-neighbourhood
of some simplex of $L$. 
\end{rem}
\begin{defn}
Let $\U$ be an open cover of a topological space $Y$. Given $A\subseteq Y$
we recall that the star of $A$ with respect to $\U$, denoted $\St_{\U}(A)$,
is the union of all $U\in\U$ such that $U\cap A\neq\emptyset$. We
say that $\U$ \textbf{star refines }another cover $\V$ if for each
$U\in\U$ there is a $\V\in\V$ such that $\St_{\U}(U)\subseteq V$.
We define $\St(\U)$ to be the cover consisting of the sets $\St_{\U}(U)$
as $U$ ranges in $\U$. 
\end{defn}
\begin{prop}
\label{assu:pushforward-assumptions}Let $O$ be an open subset of
a triangulable space $Y$ (not necessarily a manifold). Then every
open cover $\V$ of $\Y$ has a locally finite refinement $\U$ which
is a good cover. 
\end{prop}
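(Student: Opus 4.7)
The plan is to identify $Y$ with $|P|_{\R}$ for a closed finite simplicial complex $P$ over $\R$, and to construct $\U$ as a collection of open stars of vertices in a carefully refined, locally finite simplicial subdivision that fills $O$. I would fix a metric $d$ on $Y$ compatible with the topology and set $B_{n} := \{x \in Y \suchthat d(x, Y \setminus O) \geq 1/n\}$ for $n \geq 1$; these form a compact exhaustion of $O$ with $B_{n} \subseteq B_{n+1}^{\circ}$ and $O = \bigcup_{n} B_{n}$.

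By induction on $n$ I would build subdivisions $P = P_{0} \leq P_{1} \leq P_{2} \leq \ldots$ and a chain of finite closed subcomplexes $L_{1} \leq L_{2} \leq \ldots$, where $L_{n}$ is a subcomplex of $P_{n}$, subject to: (i) $P_{n+1}$ is obtained from $P_{n}$ by iterated barycentric subdivision holding $L_{n}$ fixed; (ii) $B_{n} \subseteq \interior(|L_{n}|_{\R})$ and $|L_{n}|_{\R} \subseteq B_{n+1}^{\circ}$; (iii) the closed star of every vertex of $L_{n}$ inside $L_{n}$ is contained in some $V \in \V$. At the inductive step I would invoke \prettyref{rem:relative-barycentric}: using the Lebesgue number of $\V$ restricted to the compact set $B_{n+1}$, I would iterate the relative barycentric subdivision enough times that each new simplex of $P_{n+1}$ meeting $B_{n+1}$ is small enough that its closed star fits inside some $V \in \V$. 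Then I would adjoin to $L_{n}$ enough new simplices of $P_{n+1}$ to cover $B_{n+1}$ in its interior, obtaining $L_{n+1}$.

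Passing to $L_{\infty} := \bigcup_{n} L_{n}$ yields a locally finite infinite simplicial complex with $|L_{\infty}|_{\R} = O$; condition (ii) ensures that the subdivision stabilizes on each compact subset of $O$, whence local finiteness. I would then set
\[
\U \;:=\; \{\St(v, L_{\infty}) \suchthat v \text{ a vertex of } L_{\infty}\}.
\]
Every point of $O$ lies in an open simplex of $L_{\infty}$ and hence in $\St(v, L_{\infty})$ for each vertex $v$ of that simplex, so $\U$ covers $O$; it is locally finite because $L_{\infty}$ is; it refines $\V$ by condition (iii); and it is a good cover because a non-empty intersection $\St(v_{1}, L_{\infty}) \cap \ldots \cap \St(v_{k}, L_{\infty})$ equals $\St(\sigma, L_{\infty})$ with $\sigma$ the simplex of vertex set $\{v_{1}, \ldots, v_{k}\}$ (the real-realization analogue of \prettyref{prop:intersection-of-stars}), and the open star of a simplex is contractible by a straight-line retraction onto $\sigma$.

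The principal obstacle is carrying out the inductive construction coherently: one must keep the triangulation coarse on the part already built (so that $L_{\infty}$ remains locally finite), while simultaneously refining aggressively near the growing frontier toward $Y \setminus O$ so that every newly introduced star fits inside some element of $\V$. The relative barycentric subdivision of \prettyref{rem:relative-barycentric}, which subdivides only outside a chosen fixed subcomplex, is precisely the technical device that allows for this simultaneous control and furnishes the quantitative estimates needed to match the Lebesgue numbers of $\V$ on the sets $B_{n+1}$.
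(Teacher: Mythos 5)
Your proposal follows the same broad strategy as the paper: exhaust $O$ by compacta, build a chain of subcomplexes $L_{n}$ using relative barycentric subdivisions that hold $L_{n}$ fixed, take $L=\bigcup_{n}L_{n}$, and use open stars as the good cover. Local finiteness via stabilization of stars on each compactum, and contractibility of intersections of stars, are handled correctly.

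The genuine gap is in how you make the cover refine $\V$. The paper first passes to a locally finite \emph{star-refinement} $\W\prec\V$ and only needs each \emph{closed simplex} of $L$ to lie inside some $W\in\W$; then $\St(x,L)$ is a union of simplexes, each in some $W$, all meeting the $W$ containing the simplex of $x$, so $\St(x,L)\subseteq\St_{\W}(W)\subseteq V$ for some $V\in\V$, automatically. You instead try to control the \emph{closed star of every vertex} directly against $\V$ (your condition (iii)), and this is the wrong invariant to propagate inductively. When you pass from $L_{n}$ to $L_{n+1}$ by adjoining finely subdivided simplexes along the frontier of $|L_{n}|$, the star of a vertex $v$ lying on that frontier strictly grows; condition (iii) only constrains the star computed inside $L_{n}$, not its enlargement in $L_{n+1}$ (let alone $L_{\infty}$). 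The sentence ``each new simplex of $P_{n+1}$ meeting $B_{n+1}$ is small enough that its closed star fits inside some $V\in\V$'' conflates the diameter of a simplex with the diameter of its star: refining aggressively makes the new simplexes small, but the star of an old frontier vertex $v$ is the union of those small new simplexes with the untouched, comparatively large simplexes of $L_{n}$ already adjacent to $v$, and the Lebesgue number of $\V$ on $B_{n+1}$ gives no bound on that union. To close this you either need to insert the star-refinement step as in the paper, or else strengthen (iii) to demand containment with a quantified margin (so that the $\varepsilon_{n+1}$-small additions at the next stage cannot push the star out of $V$) and verify that margin is preserved. As written, the inductive step does not establish that $\U$ refines $\V$.
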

\begin{proof}
We can assume that $Y$ is the geometric realization $|P|$ (over
$\R$) of a finite simplicial complex $P$. 

Since $Y$ is a metric space, so is $O\subseteq Y$. In particular
$O$ is paracompact, and therefore $\V$ has a locally finite star-refinement
$\W\prec\V$. We plan to show that $O$ is the realization of an infinite
simplicial complex $L$ with the property that each closed simplex
of $L$ is contained in some element of $\W$. Granted this, by \prettyref{prop:intersection-of-stars}
we can take $\U$ to be the open cover consisting of the sets $\St(x,L)$
for $x\in O$. 

To begin with, note that we can write $O$ as the union $O=\bigcup_{n\in\N}C_{n}$
of an increasing sequence of compact sets in such a way that every
compact subset of $O$ belongs to $C_{n}$ for some $n$ (it suffices
to define $C_{n}$ as the set of points at distance $\geq1/n$ from
the frontier of $O$). 

Since $C_{0}$ is compact, by the Lebesgue number lemma there is some
$\eps_{0}>0$ such that every subset of $C_{0}$ of diameter $<\eps_{0}$
is contained in some element of $\W$. Now let $P_{0}$ be an iterated
barycentric sudivision of $P$ with simplexes of diameter $<\eps$
and let $L_{0}$ be the largest closed subcomplex of $P_{0}$ with
$|L_{0}|\subseteq C_{0}$. Notice that every closed simplex of $L_{0}$
is contained in some element of $\W$. 

Starting with $P_{0},L_{0}$ we shall define by induction a sequence
of subdivisions $P_{i}$ of $P=P_{0}$ and subcomplexes $L_{i}$ of
$L_{0}$. For concreteness, let us consider the case $i=1$. 

The complex $P_{1}$ will be of the form $P_{0}^{(n)}$, where $P_{0}^{(n)}$
is the $n$-th iterated barycentric subdivision of $P_{0}$ holding
the subcomplex $L_{0}$ fixed. To choose the value of $n$ we proceed
as follows. By the Lebesgue number lemma there is some $\eps_{1}>0$
with $\eps_{1}<\eps_{0}/2$ such that every closed subset of $C_{1}$
of diameter $<\eps_{1}$ is contained in some element of $\W$. By
taking a smaller value for $\eps_{1}$ if necessary, we can also assume
(by definition of $L_{0}$), that the closed $\eps_{1}$-neighbourhood
of any closed simplex $\bar{\sigma}$ of $L_{0}$ is contained in
some element of $\W$. By \prettyref{rem:relative-barycentric} there
is some $n_{0}$ such that for every $n\geq n_{0}$ and for every
closed simplex $\bar{\sigma}$ of $P_{0}^{(n)}$, either $\bar{\sigma}$
is contained in the $\eps_{1}$ neighbourhood of some $\lambda\in L_{0}$,
or the diameter of $\bar{\sigma}$ is less then $\eps_{1}$. In both
cases, if $\bar{\sigma}$ is included in $C_{1}$, then it is contained
in some element of $\W$. 

We now define $P_{1}=P_{0}^{(n)}$ and we let $L_{1}$ be the biggest
closed subcomplex of $P_{1}$ with $|L_{1}|\subseteq C_{1}$. The
crucial observation is that $L_{0}$ is a subcomplex of $L_{1}$,
since both are subcomplexes of $P_{1}$ and $|L_{0}|\subseteq|L_{1}|$. 

Having defined $P_{0},L_{0},\eps_{0},P_{1},L_{1},\eps_{1}$ we can
continue in the same fashion: given $P_{i},L_{i},\eps_{i}$ we define
$P_{i+1},L_{i+1},\eps_{i+1}$ in the same way we defined $P_{1},L_{1},\eps_{1}$
starting from $P_{0},L_{0},\eps_{0}$ and observe that $\eps_{n}\to0$
as $n\to\infty$. 

Since by construction each $L_{i}$ is a subcomplex of $L_{i+1}$,
we can consider the infinite simplicial complex $L:=\bigcup_{i\in\N}L_{i}$.
We claim that its geometrical realization is $O$. Granted the claim,
by construction each closed simplex of $L:=\bigcup_{i\in\N}L_{i}$
is contained in some $W\in\W$, and the proof is finished. 

To prove the claim notice that by construction $\bigcup_{i}L_{i}\subseteq O$.
To prove the equality we must show that $L_{i}$ is not too small.
Consider for instance $L_{1}$. We claim that if $x\in O$ is such
that its closed $\eps_{1}$-neighbourhood is contained in $C_{1}$,
then $x\in|L_{1}|$. Indeed, consider the (open) simplex $\sigma\in P_{1}$
containing $x$. Then either $\bar{\sigma}$ has diameter $<\eps_{1}$
or it is included in the $\eps_{1}$-neighbourood of $|L_{0}|$, and
in both cases $\bar{\sigma}$ is included in $|L_{1}|$. The same
argument applies for an arbirary $i\in\N$ instead of $i=1$ and immediately
implies the desired claim (since $\eps_{i}\to0$). 
\end{proof}

\section{Homotopy}

Recall that two continuous maps $f_{0},f_{1}:Z\to Y$ between topological
spaces are \textbf{homotopic} if there is a continuous function $H:Z\times[0,1]\to Y$
such that $H(z,0)=f_{0}(z)$ and $H(z,1)=f_{1}(z)$ for every $z\in Z$. 

Given base points $z_{0}\in Z$ and $y_{0}\in Y$ and a function $f:Z\to Y$,
we write $f:(Z,z_{0})\to(Y,y_{0})$ if $f$ sends $z_{0}$ to $y_{0}$.
Given an homotopy $H$ between two maps $f_{0},f_{1}:(Z,z_{0})\to(Y,y_{0})$
we say that $H$ is a homotopy relative to $z_{0}$ if $H(z_{0},t)=y_{0}$
for all $t\in I$, where $I=[0,1]$.
\begin{defn}
\label{def:pointed-homotopy-classes}If $Z$ and $Y$ are topological
spaces, we let $[Z,Y]$ denote the set of all homotopy classes of
continuous functions from $Z$ to $Y$. Given base points $z_{0}\in Z$
and $y_{0}\in Y$, we let $[(Z,z_{0}),(Y,y_{0})]$, or simply $[Z,Y]_{0}$,
denote the set of all homotopy classes of continuous functions $f:(Z,z_{0})\to(Y,y_{0})$
relative to $z_{0}$. The $n$-th homotopy group is defined as 
\[
\pi_{n}(Y):=[S^{n},Y]_{0}
\]
where $S^{n}$ is the $n$-th sphere and we put on $\pi_{n}(Y)$ the
usual group operation if $n>0$ (see \cite{Hatcher2002} for the details). 
\end{defn}
In the rest of this section we work in the classical category of topological
spaces and we give a sufficient condition for two maps to be homotopic.
Later we shall need to adapt the proofs to the definable category,
but with additional complications. 
\begin{defn}
Given a collection $\U$ of subsets of a set $O$ and two functions
$f,g:Z\to O$, we say that $f$ and $g$ are \textbf{$\U$-close}
if for any $z\in Z$ there is $U\in\U$ such that both $f(z)$ and
$g(z)$ are in $U$. 
\end{defn}
The following definition is adapted from \cite[Note 4]{Dugundji}.
\begin{defn}
\label{def:UP-small}Let $f:Z\to Y$ be a function between two sets
$Z$ and $Y$. Let $P$ be a collection of sets whose union $\bigcup P$
includes $Z$, and let $\U$ be a collection of subsets of $Y$. We
say that $f$ is \textbf{$(\U,P)$-small} if for every $\sigma\in P$
the image $f(\sigma\cap Z)$ is contained in some $U\in\U$. 
\end{defn}
\begin{lem}
\label{lem:good_cover_extension} Let $\U$ be a locally finite good
cover of a topological space $Y$ and let $L$ be a closed subcomplex
of a closed simplicial complex $P$ defined over $\R$. Let $f:|L\cup P^{(0)}|_{\R}\to Y$
be a $(\U,\ov P)$-small map (recall that $\ov P$ is the collection
of all closures of simplexes of $P$). Then $f$ can be extended to
a $(\U,\ov P)$-small map $f':|P|_{\R}\to Y$ with the property that,
for all $U\in\U$ and for every closed simplex $\bar{\sigma}$ of
$P$, if $f(\bar{\sigma}_{|L\cup P^{(0)}})\subseteq U$, then $f'(\bar{\sigma})\subseteq U$. 
\end{lem}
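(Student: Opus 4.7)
The plan is to extend $f$ skeleton by skeleton, strengthening the conclusion so that the induction carries through. For each closed simplex $\bar\sigma$ of $P$, define
\[
\U_\sigma \;:=\; \{\, U\in\U \,:\, f(\bar\sigma_{|L\cup P^{(0)}})\subseteq U\,\}.
\]
The hypothesis that $f$ is $(\U,\ov P)$-small is exactly that $\U_\sigma\neq\emptyset$. My goal is to construct $f':|P|_\R\to Y$ with the stronger property that $f'(\bar\sigma)\subseteq\bigcap \U_\sigma$ for every $\bar\sigma$. Granted this, the conclusion of the lemma is immediate: any $U\in\U$ with $f(\bar\sigma_{|L\cup P^{(0)}})\subseteq U$ lies in $\U_\sigma$, hence contains $f'(\bar\sigma)$.

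Before the induction I need two observations. First, $\U_\sigma$ is finite: the set $\bar\sigma_{|L\cup P^{(0)}}$ is compact (as a closed subset of the compact simplex $\bar\sigma$), its image in $Y$ is a nonempty compact set (it contains the images of the vertices of $\sigma$), and a locally finite cover has only finitely many members meeting a given compact set. Since $\U$ is a good cover, the finite intersection $\bigcap\U_\sigma$ is therefore contractible. Second, if $\tau$ is a face of $\sigma$ then $\bar\tau_{|L\cup P^{(0)}}\subseteq\bar\sigma_{|L\cup P^{(0)}}$, so $\U_\sigma\subseteq\U_\tau$ and consequently $\bigcap\U_\tau\subseteq\bigcap\U_\sigma$. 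This monotonicity is what will allow the boundary of an $n$-simplex to already land in the target $\bigcap\U_\sigma$.

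For the induction, set $f'=f$ on $|L\cup P^{(0)}|$; the desired property holds trivially on each vertex and, for $\sigma\in L$, by unwinding the definition since $\bar\sigma_{|L\cup P^{(0)}}=\bar\sigma$. Now assume $f'$ has been defined on (the union of $L$ with) the $(n{-}1)$-skeleton, satisfying the containment condition on every closed simplex handled so far. For each $n$-simplex $\sigma\notin L$, every proper face $\tau$ of $\sigma$ has been handled, so $f'(\bar\tau)\subseteq\bigcap\U_\tau\subseteq\bigcap\U_\sigma$ by monotonicity. Hence $f'$ is already a well-defined continuous map $\partial\bar\sigma\to\bigcap\U_\sigma$. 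Since $\bigcap\U_\sigma$ is contractible and $\partial\bar\sigma\cong S^{n-1}$, the map $f'|_{\partial\bar\sigma}$ is null-homotopic and therefore extends continuously to $\bar\sigma\cong D^n$ with image still in $\bigcap\U_\sigma$ (explicitly, post-compose $f'|_{\partial\bar\sigma}$ with a contraction of $\bigcap\U_\sigma$, viewing $\bar\sigma$ as the cone on $\partial\bar\sigma$). Perform this for every $n$-simplex; the extensions automatically agree where they overlap, namely on the $(n{-}1)$-skeleton. Iterating up to $\dim P$ produces $f':|P|_\R\to Y$.

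The main obstacle is ensuring that the target $\bigcap\U_\sigma$ into which we extend is actually contractible. This is where both hypotheses on $\U$ are used in an essential way: local finiteness gives finiteness of $\U_\sigma$, and the good-cover hypothesis then yields contractibility of the finite intersection. Once this is in hand, the rest is the familiar obstruction-theoretic skeleton-by-skeleton extension, simplified by the fact that all obstructions vanish because the target is contractible at each stage.
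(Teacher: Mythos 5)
Your proof is correct and is essentially the paper's argument: both extend skeleton by skeleton, filling in each closed simplex $\bar\sigma$ by coning through a contraction of the (contractible, by local finiteness plus goodness of $\U$) intersection of the relevant cover members. The only organizational difference is that you fix the target $\bigcap\U_\sigma$ in advance from the data on $|L\cup P^{(0)}|$ and invoke the face-monotonicity $\U_\sigma\subseteq\U_\tau$, whereas the paper recomputes the intersection at stage $k$ from $f'(\bar\sigma_{|L\cup P^{(k)}})$; your version makes the maintained invariant explicit and is a tidy rewording rather than a different route.
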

\begin{proof}
Reasoning by induction we can assume that $f'$ is already defined
on $|L\cup P^{(k)}|$ and we only need to extend it to $|L\cup P^{(k+1)}|$.
Let $\sigma\in P^{(k+1)}$. We can identify $\bar{\sigma}$ with the
cone over its boundary $\partial\sigma$, so that every point of $\bar{\sigma}$
is determined by a pair $(t,x)$ with $t\in[0,1]$ and $x\in\partial\sigma$.
Let $U_{1},\ldots,U_{n}$ be the elements of $\U$ containing $f'(\bar{\sigma}_{|L\cup P^{(k)}})$
(notice that $n>0$ by the inductive hypothesis), let $V$ be their
intersection, and let $\phi:[0,1]\times V\to V$ be a retraction of
$V$ to a point. We extend $f'$ to $\bar{\sigma}$ sending $(t,x)\in\bar{\sigma}$
to $\phi(t,f'(x))\in V$. Note that if $f'(\bar{\sigma}_{|L\cup P^{(k)}})\subseteq U\in\U$,
then $U$ is one of the $U_{i},$ and since by construction $f'(\bar{\sigma})\subseteq V=\bigcap_{i}U_{i}$,
we get $f'(\bar{\sigma})\subseteq U$. 
\end{proof}
\begin{prop}
\label{prop:close_homotopic} Let $\U$ be a locally finite good cover
of a topological space $Y$, let $P$ be a closed simplicial complex
and let $f,g:|P|_{\R}\to Y$ be two maps. Assume that $f$ and $g$
are $\U$-close. Then, $f$ and $g$ are homotopic. 
\end{prop}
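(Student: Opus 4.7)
I plan to reduce the proposition to Lemma~\ref{lem:good_cover_extension} by triangulating the cylinder $|P|_{\R}\times I$ in a way that is simultaneously compatible with the product structure and with the common cover $\U$.

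First I would use compactness and continuity to replace the abstract $\U$-closeness with a uniform statement. For each $x\in|P|_{\R}$, $\U$-closeness gives some $U_x\in\U$ containing $f(x)$ and $g(x)$; by continuity of $f$ and $g$ there is an open neighbourhood $V_x$ of $x$ with $f(V_x)\cup g(V_x)\subseteq U_x$. Since $|P|_{\R}$ is compact (as $P$ is finite), the open cover $\{V_x\}$ admits a Lebesgue number $\eps>0$. Taking sufficiently many iterated barycentric subdivisions $P'$ of $P$, every closed simplex $\bar\sigma$ of $P'$ has diameter less than $\eps$, and therefore both $f(\bar\sigma)$ and $g(\bar\sigma)$ are contained in a common element of $\U$.

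Next I would triangulate the cylinder. Using the standard prism triangulation, I build a closed simplicial complex $Q$ defined over $\R$ with geometric realisation $|P'|_{\R}\times I$ in which both $|P'|_{\R}\times\{0\}$ and $|P'|_{\R}\times\{1\}$ are closed subcomplexes and every vertex of $Q$ lies in one of them; moreover every closed simplex $\bar\tau$ of $Q$ is contained in some $\bar\sigma\times I$ with $\sigma\in P'$, so its bottom face $\bar\tau\cap(|P'|_{\R}\times\{0\})$ and top face $\bar\tau\cap(|P'|_{\R}\times\{1\})$ are contained in $\bar\sigma\times\{0\}$ and $\bar\sigma\times\{1\}$ respectively. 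Let $L$ be the subcomplex $(|P'|_{\R}\times\{0\})\cup(|P'|_{\R}\times\{1\})$ and define
\[
H_0:|L|_{\R}\to Y,\qquad H_0(x,0)=f(x),\ H_0(x,1)=g(x).
\]
By the previous paragraph, for each closed simplex $\bar\tau$ of $Q$ there is a single $U\in\U$ with $f(\bar\sigma),g(\bar\sigma)\subseteq U$, and hence $H_0(\bar\tau\cap|L|_{\R})\subseteq U$. Thus $H_0$ is $(\U,\ov Q)$-small.

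Finally, since every vertex of $Q$ already lies in $L$, we have $L\cup Q^{(0)}=L$, and Lemma~\ref{lem:good_cover_extension} applies directly: it extends $H_0$ to a continuous $(\U,\ov Q)$-small map $H:|Q|_{\R}\to Y$. As $|Q|_{\R}=|P|_{\R}\times I$ and $H$ restricts to $f$ on the bottom face and to $g$ on the top, $H$ is the desired homotopy between $f$ and $g$. The only slightly delicate step is ensuring in the triangulation of the cylinder that each closed simplex $\bar\tau$ sits inside $\bar\sigma\times I$ for a single $\sigma\in P'$, which is the whole point of using the prism triangulation rather than an arbitrary refinement.
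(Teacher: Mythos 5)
Your proof is correct and follows essentially the same route as the paper: pull back $\U$ to a common cover via $\U$-closeness, apply the Lebesgue number lemma to a barycentric subdivision $P'$, triangulate the prism $|P'|_{\R}\times I$ so that the top and bottom contain the whole $0$-skeleton, and then invoke Lemma~\ref{lem:good_cover_extension} to extend $f\sqcup g$ to a $(\U,\ov{P'\times I})$-small homotopy. The only cosmetic difference is that the paper phrases the refinement step via the single cover $\V=\{f^{-1}(U)\cap g^{-1}(U):U\in\U\}$ rather than your pointwise neighbourhoods $V_x$, but these are equivalent.
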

\begin{proof}
Since $f$ and $g$ are $\U$-close, the family $\V=\{f^{-1}(U)\cap g^{-1}(U):U\in\U\}$
is an open cover of $|P|_{\R}$. By the Lebesgue number lemma (since
we work over $\R$) there is an iterated barycentric subdivision $P'$
of $P$ such that every closed simplex of $P'$ is contained in some
element of $\V$. Then, by construction, for every $\sigma\in P'$
there is $U\in\U$ such that $f(\bar{\sigma})$ and $g(\bar{\sigma})$
are contained in $U$.

Let now $I=[0,1]$ and consider the simplicial complex $P'\times I$
with the standard triangulation (as in \cite[p.\ 112, Proof of 2.10]{Hatcher2002}).
Consider the subcomplex $P'\times\{0,1\}$ of $P'\times I$ and note
that it contains the $0$-skeleton of $P'\times I$. Define $f\sqcup g:|P\times\{0,1\}|_{\R}=|P'\times\{0,1\}|_{\R}\to Y$
as the function which sends $(x,0)$ to $f(x)$ and $(x,1)$ to $g(x)$.
Note that $f\sqcup g$ is $(\U,\ov{P'\times I})$-small. Since $\U$
is a good cover, by \prettyref{lem:good_cover_extension} we can extend
it to a $(\U,\ov{P'\times I})$-small function $H:|P\times I|_{\R}\to Y$.
This map is a homotopy between $f$ and $g$.
\end{proof}

\section{Definable homotopies}

Given a definable set $Z$ and a $\bigvee$-definable set $Y$, we
say that a map $f:Z\to Y$ is definable if it takes values in a definable
subset $Y_{0}$ of $Y$ and is definable as a function from $Z$ to
$Y_{0}$. We can adapt \prettyref{def:pointed-homotopy-classes} to
the definable category as follows. 
\begin{defn}
\label{def:definable-homotopy-classes}If $Z$ is a definable space
and $Y$ is a $\bigvee$-definable set, we let $[Z,Y]^{\df}$ denote
the set of all equivalence classes of definable continuous maps $f$
from $Z$ to $Y$ modulo definable homotopies. Similarly we write
$[Z,Y]_{0}^{\df}$ when we work with pointed spaces and homotopies
relative to the base point $z_{0}\in Z$. The $n$-th o-minimal homotopy
group is defined as 
\[
\pi_{n}(Y)^{\df}:=[S^{n},Y]_{0}^{\df}
\]
where $S^{n}$ is the $n$-th sphere in $M$. If $n>0$ we put on
$\pi_{n}(Y)^{\df}$ a group operation in analogy with the classical
case. 
\end{defn}
In \cite{Berarducci2002c} it is proved that if $Y$ is a $\emptyset$-semialgebraic
set, $\pi_{1}(Y)^{\df}\cong\pi_{1}(Y(\R))$, so in particular $\pi_{1}(Y)^{\df}$
is finitely generated. This has been generalized to the higher homotopy
groups in \cite{Baro2010}. We shall later give a self-contained proof
of both results. By the same arguments we obtain the following result
of \cite{Berarducci2010d}: given a definably compact group $G$ there
is a natural isomorphism $\pi_{n}(G)^{\df}\cong\pi_{n}(G/G^{00})$.
The new proofs yield a stronger result: if $\pr:G\to G/G^{00}$ is
the projection, for every open subset $O$ of $G/G^{00}$, there is
an isomorphism $\pi_{n}(\pr^{-1}(O))^{\df}\cong\pi_{n}(O)$. This
was so far known for $n=1$ \cite{Berarducci2011}. Notice that $\pr^{-1}(O)$
is $\bigvee$-definable, whence the decision to consider $\bigvee$-definable
sets in \prettyref{def:definable-homotopy-classes}. With the new
approach we obtain additional functoriality properties and generalizations,
as it will be explained in the rest of the paper.

\section{Theorem A\label{sec:pushforward}}

As above, let $X=X(M)$ be a definable space, and let $E\subseteq X\times X$
be a definable equivalence relation of bounded index. In this section
we work under the following assumption. 
\begin{assumption}[Assumption A]
\label{assumptionA}$X/E$ is a triangulable topological space and
the natural map $\pr:X\to X/E$ is continuous.
\end{assumption}
The fact that $X/E$ is triangulable allows us to apply the results
of Section 8 regarding the existence of good covers. Note that the
continuity of $\pr$ is not a vacuous assumption because $X/E$ has
the logic topology, not the the quotient topology. By the results
in \prettyref{sec:Standard-part-map} and \prettyref{sec:Type-definable-groups}
the assumption is satisfied in the special case $X/E=G/G^{00}$ (where
$G$ is a definably compact group) and also when $X$ is a closed
and bounded $\emptyset$-semialgebraic set and $E=\ker(\st)$. 

We shall prove that there is a natural homomorphism 
\[
\pi_{n}^{\df}(X)\to\pi_{n}(X/E).
\]
This will be obtained as a consequence of a more general result concerning
homotopy classes. The following definition plays a crucial role in
the definition of the homomorphism, and exploits the analogies between
the projection $\pr:X\to X/E$ and the standard part map. 
\begin{defn}
\label{def:approximation}Let $O\subseteq X/E$ be an open subset.
Let $\U$ be an open cover of $\Y\subseteq X/E$, and let $P$ be
a closed simplicial complex defined over $\Ralg$. Consider a definable
continuous map $f:|P|_{M}\to\pY$ and let $\st:|P|_{M}\to|P|_{\R}$
be the standard part map. We say that a continuous map $f^{*}:|P|_{\R}\to\Y$
is an \textbf{$\U$-approximation} of $f$ if $\pr\circ f$ and $f^{*}\circ\st$
are $\U$-close, namely the two paths from the upper-left to the lower-right
corner of the following diagram represent maps which are $\U$-close.\[
\xymatrix@C+0.7cm@R+0.3cm{
|P|_{M} \ar[r]^-{\st} \ar[d]_{f}
&|P|_{\R} \ar[d]^{f^*}\\
\pr^{-1}(O) \ar[r]_-{\pr} & O
}
\]We say that $f$ is \textbf{$\U$-approximable} if it has a $\U$-approximation. 
\end{defn}
In general, given $f$ and $\U$, we cannot hope to find $f^{*}$
which is a $\U$-approximation of $f$. However we shall prove that,
given $\U$, every definable continous function $f$ is definably
homotopic to a $\U$-approximable map. 
\begin{defn}
Given a collection $\U$ of open subsets of $X/E$ let $\pr^{-1}(\U)$
be the collection of consisting of the $\bigvee$-definable open sets
$\pr^{-1}(U)\subseteq X$ as $U$ varies in $\U$. 
\end{defn}
Notice that $f:|P|_{M}\to X$ is $(\pr^{-1}(\U),P)$-small (\prettyref{def:UP-small})
if and only if $(\pr\circ f):|P|_{M}\to X/E$ is $(\U,P)$-small.
The next lemma shows that in this situation we can ignore the difference
between closed and open simplexes. Recall that $\ov P=\{\bar{\sigma}\suchthat\sigma\in P\}$.
We have: 
\begin{lem}
\label{lem:Pclosed}Let $\U$ be a collection of open subsets of $X/E$
and let $f:|P|_{M}\to X$ be a definable continuous map. Then $f$
is $(\pr^{-1}(\U),P)$-small if and only if it is $(\pr^{-1}(\U),\ov P)$-small. 
\end{lem}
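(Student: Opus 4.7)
The direction $(\Leftarrow)$ is immediate, since for every $\sigma\in P$ one has $\sigma\subseteq\bar\sigma$, so any $\pr^{-1}(U)$ containing $f(\bar\sigma_M)$ also contains $f(\sigma_M)$.

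For the non-trivial direction $(\Rightarrow)$, my plan is to exploit \prettyref{prop:proj-of-closure} (which applies thanks to Assumption A, built into the setup of this section) together with the continuity of $f$. Fix $\sigma\in P$. By hypothesis there is $U\in\U$ with $f(\sigma_M)\subseteq\pr^{-1}(U)$, equivalently $\pr(f(\sigma_M))\subseteq U$. Since $\sigma_M\subseteq M^k$ is definable and $f$ is definable, the image $C:=f(\sigma_M)$ is a definable subset of $X$. I want to upgrade the inclusion from $\sigma_M$ to $\bar\sigma_M$, where $\bar\sigma_M=\cl(\sigma_M)$ is the closure of $\sigma_M$ in $M^k$ (and hence in $|P|_M$).

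The key step is the chain
\[
\pr\!\left(f(\bar\sigma_M)\right)\;\subseteq\;\pr\!\left(\overline{f(\sigma_M)}\right)\;=\;\pr\!\left(f(\sigma_M)\right)\;\subseteq\;U.
\]
The first inclusion holds because $f$ is continuous, so $f(\cl(\sigma_M))\subseteq\cl(f(\sigma_M))=\overline{C}$. The middle equality is exactly \prettyref{prop:proj-of-closure} applied to the definable set $C$: $\pr(\overline{C})=\pr(C)$. The final inclusion is the hypothesis. Consequently $f(\bar\sigma_M)\subseteq\pr^{-1}(U)$, which is what was needed.

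I do not foresee a serious obstacle: the only subtle point is that \prettyref{prop:proj-of-closure} requires both the continuity of $\pr$ and the definability of the set whose closure is being projected, and both conditions are in place (the former by \ref{assumptionA}, the latter because definable images of definable sets under a definable map are definable). One should also verify that the notation $\bar\sigma_M$ does mean the closure of $\sigma_M$ in the order topology of $M^k$, which is indeed the convention introduced in Section~4 just before \prettyref{fact:triangulation}. With these remarks the argument above is complete.
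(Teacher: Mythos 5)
Your proof is correct and follows essentially the same route as the paper: continuity of $f$ gives $f(\bar\sigma_M)\subseteq\overline{f(\sigma_M)}$, and then \prettyref{prop:proj-of-closure} gives $\pr(\overline{f(\sigma_M)})=\pr(f(\sigma_M))\subseteq U$. The paper only spells out this nontrivial direction, leaving the converse (which you handle in one line) implicit.
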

\begin{proof}
Let $\sigma\in P$. Since $f$ is continous, $f(\bar{\sigma})\subseteq\ov{f(\sigma)}$
and by \prettyref{prop:proj-of-closure} we have $\pr(\ov{f(\sigma)})=\pr(f(\sigma))$,
so if $f$ is $(\pr^{-1}(\U),P)$-small, it is also $(\pr^{-1}(\U),\ov P)$-small. 
\end{proof}
The following lemma shows that small maps are approximable. 
\begin{lem}
\label{lem:existence-of-approximations}Let $\V$ be a locally finite
good open cover of $\Y$ and let $f:|P|_{M}\to\pY$ be a $(\pr^{-1}(\V),P)$-small
map. Then there exists a $\V$-approximation $f^{*}:|P|_{\R}\to\Y$
of $f$. 
\end{lem}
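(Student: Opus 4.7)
The plan is to reduce the construction of $f^{*}$ to an application of the good-cover extension lemma (\prettyref{lem:good_cover_extension}) applied to the vertex skeleton of $P$, using the fact that standard parts of points in $|P|_{M}$ land in the corresponding closed simplices of $|P|_{\R}$.

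First, using \prettyref{lem:Pclosed}, I would upgrade the hypothesis from $(\pr^{-1}(\V),P)$-smallness to $(\pr^{-1}(\V),\ov{P})$-smallness. Then for each $\sigma\in P$, pick once and for all a set $V_{\sigma}\in\V$ such that $f(\bar{\sigma}_{M})\subseteq\pr^{-1}(V_{\sigma})$, i.e.\ $\pr(f(\bar\sigma_M))\subseteq V_\sigma$. Next, I would define $f^{*}$ on the $0$-skeleton of $P$ (whose vertices are real algebraic, hence belong to both $|P|_{M}$ and $|P|_{\R}$) by setting $f^{*}(v):=\pr(f(v))\in O$. For every closed simplex $\bar{\sigma}$ of $P$ containing a vertex $v$, one has $v\in\bar{\sigma}_{M}$, so $f(v)\in\pr^{-1}(V_{\sigma})$ and hence $f^{*}(v)\in V_{\sigma}$; thus the restriction of $f^{*}$ to $|P^{(0)}|_{\R}$ is $(\V,\ov{P})$-small.

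Now I would invoke \prettyref{lem:good_cover_extension} with $Y:=O$, $\U:=\V$, and $L:=\emptyset$, to extend $f^{*}$ to a $(\V,\ov{P})$-small continuous map $f^{*}:|P|_{\R}\to O$ with the crucial additional property that for every closed simplex $\bar{\sigma}$ of $P$, if $f^{*}(\bar{\sigma}_{|P^{(0)}})\subseteq V_{\sigma}$ then $f^{*}(\bar{\sigma}_{\R})\subseteq V_{\sigma}$. By the previous paragraph this hypothesis is met for every $\sigma\in P$ with the sets $V_{\sigma}$ chosen above.

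Finally, to verify that this $f^{*}$ is a $\V$-approximation of $f$, pick any $x\in|P|_{M}$ and let $\sigma\in P$ be the (open) simplex containing it. Since $P$ is defined over $\Ralg$, the closed simplex $\bar{\sigma}$ is cut out by the same linear inequalities in $M$ and in $\R$, so $\st(x)\in\bar{\sigma}_{\R}$. Then $f^{*}(\st(x))\in V_{\sigma}$ by the extension property, while $\pr(f(x))\in\pr(f(\bar{\sigma}_{M}))\subseteq V_{\sigma}$ by the choice of $V_{\sigma}$; hence the maps $\pr\circ f$ and $f^{*}\circ\st$ are $\V$-close, as required. The main conceptual point — rather than a serious obstacle — is checking compatibility between the $M$-side and $\R$-side of the diagram, which is handled by the observation that standard parts respect the combinatorial closure of simplices whose vertices are real algebraic.
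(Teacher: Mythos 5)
Your proposal is correct and follows essentially the same route as the paper's proof: define $f^{*}$ on the zero-skeleton via $v\mapsto\pr(f(v))$ (identifying the real-algebraic vertices across $M$ and $\R$), note this is $(\V,\ov P)$-small, extend by \prettyref{lem:good_cover_extension} (with trivial $L$), and use the extension property together with $\st(\bar\sigma_M)=\bar\sigma_\R$ to conclude $\V$-closeness.
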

\begin{proof}
Define $f^{*}$ on the zero-skeleton of $P$ by $f^{*(0)}(\st(v))=\pr(f(v))$
for any vertex $v$ of $|P|_{M}$ (since $v$ has coordinates in $\Ralg$
we can identify $\st(v)\in|P|_{\R}$ with $v$). Since $f$ is $(\pr^{-1}(\V),P)$-small,
$f^{*(0)}$ is $(\V,P)$-small and therefore, by \prettyref{lem:good_cover_extension}
(and \prettyref{lem:Pclosed}), we can extend $f^{*(0)}$ to a $(\V,P)$-small
map $f^{*}:|P|_{\R}\to\Y$. We claim that $f^{*}$ is a $\V$-approximation
of $f$. Indeed, fix a point $z\in|P|_{M}$ and let $\sigma=\sigma_{M}\in P$
be a simplex containing $z$. Since $f$ is $(\pr^{-1}(\V),P)$-small,
there is a $V\in\V$ such that $\pr\circ f(\sigma)\subset V$, so
in particular $\pr\circ f(\sigma_{M}^{(0)})=f^{*(0)}(\sigma_{\R}^{(0)})\subseteq V$.
By \prettyref{lem:good_cover_extension}, we also have $f^{*}(\sigma_{\R})\subseteq V$.
Since $\st(\bar{\sigma}_{M})=\bar{\sigma}_{\R}$, both $\pr\circ f(z)$
and $f^{*}(\st(z))$ are in $V$.
\end{proof}
The next lemma shows that every map $f$ is homotopic to a small (hence
approximable) map $f'$.
\begin{lem}
\label{lem:existence-small-def} Let $O\subseteq X/E$ be an open
subset of $X/E$. Given a definable map $f:|P|_{M}\to\pY$ and an
open cover $\U$ of $\Y$, we can find a subdivision $P'$ of $P$
and a normal triangulation $(P',\phi)$ of $|P|$ such that $f'=f\circ\phi$
is $(\pr^{-1}(\U),P')$-small. Moreover if $P$ is defined over $\Ralg$,
we can take $P'$ defined over $\Ralg$. Notice that $f'$ is homotopic
to $f$ (as $\phi$ is homotopic to the identity). 
\end{lem}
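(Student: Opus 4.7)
The plan is to reduce the statement to \prettyref{prop:normal-small}. By \ref{assumptionA} the projection $\pr:X\to X/E$ is continuous, so for every $U\in\U$ the preimage $\pr^{-1}(U)$ is an open $\bigvee$-definable subset of $X$, and the collection $\pr^{-1}(\U):=\{\pr^{-1}(U)\suchthat U\in\U\}$ is a family of open $\bigvee$-definable sets whose union $\pr^{-1}(O)$ contains $\img(f)$.

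The only obstacle to applying \prettyref{prop:normal-small} directly is that $\U$ (and hence $\pr^{-1}(\U)$) need not be a small family. I would circumvent this by a compactness argument: $\img(f)$ is a definable subset of $X$, so by \prettyref{prop:image-definable-closed} the image $\pr(\img(f))$ is closed in $X/E$. Since $X/E$ is compact Hausdorff under the logic topology, $\pr(\img(f))$ is a compact subset of $O$, and from the open cover $\U$ we extract a finite subcover $U_{1},\ldots,U_{n}\in\U$ with $\pr(\img(f))\subseteq U_{1}\cup\cdots\cup U_{n}$. Consequently $\img(f)\subseteq\pr^{-1}(U_{1})\cup\cdots\cup\pr^{-1}(U_{n})$.

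Now apply \prettyref{prop:normal-small} to $f$ with the finite (hence small) family $\V=\{\pr^{-1}(U_{1}),\ldots,\pr^{-1}(U_{n})\}$. This yields a subdivision $P'$ of $P$ and a normal triangulation $(P',\phi)$ of $|P|$ such that for every $\sigma\in P'$ the image $(f\circ\phi)(\sigma_{M})$ is contained in some $\pr^{-1}(U_{i})\in\pr^{-1}(\U)$. Hence $f'=f\circ\phi$ is $(\pr^{-1}(\U),P')$-small. The moreover clause asserting that $P'$ can be chosen over $\Ralg$ when $P$ is over $\Ralg$ is inherited from the corresponding clause of \prettyref{prop:normal-small}. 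Finally, that $f'$ is definably homotopic to $f$ follows by postcomposing $f$ with the definable homotopy between $\phi$ and the identity on $|P|$ furnished by the definition of normal triangulation. No step presents a serious obstacle; the only mildly subtle point is the compactness reduction to a finite subcover, which is needed to meet the smallness hypothesis of \prettyref{prop:normal-small}.
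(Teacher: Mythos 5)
Your proposal is correct and follows the same route as the paper, which simply cites \prettyref{prop:normal-small}. The one addition you make—observing that $\U$ need not be small and therefore $\pr^{-1}(\U)$ does not directly satisfy the hypothesis of \prettyref{prop:normal-small}, then extracting a finite subcover using the compactness of $\pr(\img(f))$ (via \prettyref{prop:image-definable-closed} and compactness of $X/E$)—is a genuine and useful clarification of a step the paper leaves implicit. In practice the covers used later (locally finite good covers of a second-countable space) are countable, hence small, but the lemma as stated allows arbitrary open covers, so your compactness reduction is the right way to make the one-line proof airtight.
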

\begin{proof}
By \prettyref{prop:normal-small}.
\end{proof}
\begin{lem}
\label{lem:same-approx-homotopic}Let $\U$ be a star-refinement of
a good cover of $\Y$. Any two $\U$-approximations of $f:|P|_{M}\to\pr^{-1}(\Y)$
are homotopic. 
\end{lem}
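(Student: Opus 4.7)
The plan is to reduce this to Proposition \ref{prop:close_homotopic}: I would show that any two $\U$-approximations $f^{*}_{1},f^{*}_{2}:|P|_{\R}\to O$ of $f$ are themselves $\V$-close for the underlying good cover $\V$ that $\U$ star-refines, and then invoke \prettyref{prop:close_homotopic} (the good cover may be taken locally finite by \prettyref{assu:pushforward-assumptions}, so the hypotheses match).

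The heart of the argument is a lifting step: for every $w\in|P|_{\R}$, I need to produce some $z\in|P|_{M}$ with $\st(z)=w$, so that the $\U$-closeness of $\pr\circ f$ and $f^{*}_{i}\circ\st$ at $z$ gives information about $f^{*}_{i}(w)$. This lifting is possible because $P$ is a closed simplicial complex defined over $\Ralg$: any $w\in|P|_{\R}$ lies in some closed simplex $\bar{\sigma}_{\R}$ with real algebraic vertices $v_{0},\ldots,v_{n}$, so $w=\sum\lambda_{i}v_{i}$ with $\lambda_{i}\in\R_{\geq 0}$ summing to $1$; choosing $\lambda'_{i}\in M_{\geq 0}$ with $\st(\lambda'_{i})=\lambda_{i}$ and $\sum\lambda'_{i}=1$ yields $z=\sum\lambda'_{i}v_{i}\in\bar{\sigma}_{M}\subseteq|P|_{M}$ with $\st(z)=w$.

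Having fixed such a lift $z$, the definition of $\U$-approximation supplies $U_{1},U_{2}\in\U$ with $\{\pr(f(z)),f^{*}_{1}(w)\}\subseteq U_{1}$ and $\{\pr(f(z)),f^{*}_{2}(w)\}\subseteq U_{2}$. In particular $U_{1}\cap U_{2}\neq\emptyset$, so $U_{2}\subseteq\St_{\U}(U_{1})$. Since $\U$ star-refines $\V$, there is $V\in\V$ with $\St_{\U}(U_{1})\subseteq V$, and so $f^{*}_{1}(w),f^{*}_{2}(w)\in V$. This is exactly $\V$-closeness of $f^{*}_{1}$ and $f^{*}_{2}$, and then \prettyref{prop:close_homotopic} produces the desired homotopy.

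The only genuine subtlety is the lifting argument — everything else is a routine chase through the definitions of $\U$-approximation and star-refinement. Surjectivity of the standard part map on $|P|$ hinges on $P$ being realized over $\Ralg$ (implicit in \prettyref{def:approximation}); without that, one could not transfer $\U$-closeness from $|P|_{M}$ to $|P|_{\R}$.
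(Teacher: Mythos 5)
Your proposal is correct and follows essentially the same route as the paper: the paper asserts in one line that any two $\U$-approximations of $f$ are $\St(\U)$-close and then invokes Proposition~\ref{prop:close_homotopic}, and your argument simply unpacks that assertion, including the routine surjectivity of $\st:|P|_{M}\to|P|_{\R}$ (guaranteed by $P$ being over $\Ralg$ and by saturation) that the paper leaves implicit.
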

\begin{proof}
Let $f_{1}^{*}$ and $f_{2}^{*}$ be two $\U$-approximations of $f$.
Then and $f_{1}^{*}$ and $f_{2}^{*}$ are $\St(\U)$-close and since
$\U$ star-refines a good cover they are homotopic by \prettyref{prop:close_homotopic}. 
\end{proof}
We are now ready to state the main result of this section. 
\begin{thm}[Theorem A]
\label{thm:A}Assume \ref{assumptionA}. 
\begin{enumerate}
\item For each open set $O\subseteq X/E$, there is a map
\[
{\pr}_{\Y}^{P}:[|P|_{M},\pY]^{\df}\to[|P|_{\R},\Y].
\]
\item The maps ${\pr}_{\Y}^{P}$ are natural with respect to inclusions
of open sets. More precisely, let $U\subseteq V$ be open sets in
$X/E$. Then, we have the following commutative diagram:\[
\xymatrix@C+0.7cm@R+0.3cm{
[|P|_{M},\pr^{-1}(U)] \ar[r]^-{\pr_{U}^{P}} \ar[d]_{i_{\pr^{-1}(U)}}
& [|P|_{\R},U] \ar[d]^{i_{U}}\\
[|P|_{M},\pr^{-1}(V)] \ar[r]_-{\pr_{V}^{P}} & [|P|_{\R},V]
}
\]where the vertical arrows are induced by the inclusions.
\item By the triangulation theorem, the same statements continue to hold
if we replace everywhere $|P|$ by a $\emptyset$-semialgebraic set. 
\item The results remain valid replacing all the homotopy classes with their
pointed versions as in \prettyref{def:pointed-homotopy-classes} and
\prettyref{def:definable-homotopy-classes}. 
\item In particular, for all $O\subseteq X/E$ there is a natural map $\pr_{O}^{S^{n}}:\pi_{n}^{\df}(\pr^{-1}(O))\to\pi_{n}(O)$
which is in a group homomorphisms when $n>0$. When $O=X/E$ we obtain
a homomorphism $\pi_{n}^{\df}(X)\to\pi_{n}(X/E)$. 
\end{enumerate}
\end{thm}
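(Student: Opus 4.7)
The plan is to construct the map $\pr_O^P$ by combining the three lemmas just proved (Lemmas \ref{lem:existence-of-approximations}, \ref{lem:existence-small-def}, \ref{lem:same-approx-homotopic}), then to verify well-definedness on homotopy classes, and finally to check naturality, pointedness and the group-theoretic properties as corollaries of the construction. Concretely, given an open $O \subseteq X/E$, I first invoke Proposition \ref{assu:pushforward-assumptions} to pick a locally finite good cover $\V$ of $O$, together with a locally finite refinement $\U \prec \V$ that star-refines $\V$ (again by \ref{assu:pushforward-assumptions}, applied to $\V$). Given $[f] \in [|P|_M, \pr^{-1}(O)]^{\df}$, I use Lemma \ref{lem:existence-small-def} to produce a subdivision $P'$ of $P$ over $\Ralg$ and a $(\pr^{-1}(\U), P')$-small map $f' = f \circ \phi$ definably homotopic to $f$; then by Lemma \ref{lem:existence-of-approximations} there exists a $\U$-approximation $f'^{*}\colon |P'|_\R \to O$. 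Since $|P'|_\R = |P|_\R$, I set $\pr_O^P([f]) := [f'^*]$.

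The main obstacle is well-definedness: I need to show $[f'^*]$ depends neither on the good covers chosen, nor on the subdivision and small representative $f'$, nor on the representative of the homotopy class $[f]$. The first two ambiguities are handled by Lemma \ref{lem:same-approx-homotopic}: any two $\U$-approximations of a fixed small map $f'$ (for $\U$ a star-refinement of a good cover) are homotopic; and any finer choice of good cover yields an approximation that is still a $\V$-approximation, whence Lemma \ref{lem:same-approx-homotopic} applies. The delicate point is homotopy invariance: if $f_0 \simeq f_1$ via a definable homotopy $H\colon |P|_M \times I \to \pr^{-1}(O)$, I triangulate $|P| \times I$ over $\Ralg$ (using the standard prism triangulation, whose vertices lie in $|P|^{(0)} \times \{0,1\}$), apply Lemmas \ref{lem:existence-small-def} and \ref{lem:existence-of-approximations} to $H$ to obtain a $\U$-approximation $H^*\colon |P|_\R \times I \to O$, and argue that its restrictions $H^*(-,0)$ and $H^*(-,1)$ are $\U$-approximations of $f_0$ and $f_1$ respectively; by Lemma \ref{lem:same-approx-homotopic} these restrictions are homotopic to $f_0^*$ and $f_1^*$, so $H^*$ witnesses $f_0^* \simeq f_1^*$. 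Care must be taken so that the triangulation of the prism is obtained from one of $|P|$ by subdivision (here Proposition \ref{prop:normal-small} is used again).

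For naturality in $U \subseteq V$, given $[f] \in [|P|_M, \pr^{-1}(U)]^{\df}$ I pick a locally finite good cover $\V$ of $V$ and let $\U := \{W \cap U \suchthat W \in \V\} \cup \{\text{a good cover of } U\}$; in this way a $\U$-approximation $f^*\colon |P|_\R \to U$ of $f$ with image in $U$ is automatically also a $\V$-approximation of $f$ regarded as a map to $\pr^{-1}(V)$, so $[i \circ f^*] = \pr_V^P(i_{\pr^{-1}(U)}[f])$ by Lemma \ref{lem:same-approx-homotopic}. Part (iii) is immediate from Fact \ref{fact:triangulation} applied over $\Ralg$, since any closed and bounded $\emptyset$-semialgebraic set is definably homeomorphic to $|P|_M$ for some closed simplicial complex $P$ over $\Ralg$ and this homeomorphism induces the required bijection of homotopy classes.

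For the pointed version in (iv), observe that a base point $p_0 \in |P|_M$ can be taken to be a vertex of $P$, hence real algebraic, and the construction in Lemma \ref{lem:existence-of-approximations} sets $f^{*}(p_0) = \pr(f(p_0))$, so base points are preserved automatically; all the auxiliary triangulations and extensions can be carried out relative to the subcomplex $\{p_0\}$ (respectively $\{p_0\} \times I$ for homotopies), so the whole construction descends to $[(|P|_M, p_0), (\pr^{-1}(O), x_0)]^{\df}$. Finally for (v), the group operation on $\pi_n$ (for $n>0$) in both the classical and the definable category is defined from a pinch map $S^n \to S^n \vee S^n$ described by the same real algebraic formulas; since $\pr_O^{S^n}$ is compatible with pasting of maps defined on subcomplexes (again by the good-cover extension, Lemma \ref{lem:good_cover_extension}), the resulting map $\pi_n^{\df}(\pr^{-1}(O)) \to \pi_n(O)$ is a homomorphism, and specializing to $O = X/E$ gives the stated homomorphism $\pi_n^{\df}(X) \to \pi_n(X/E)$.
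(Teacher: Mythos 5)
Your overall strategy follows the paper's: pick a star-refinement of a good cover, replace $f$ by a small representative via normal triangulation, take a $\U$-approximation, and define the image to be its homotopy class. The naturality, pointed, and group-homomorphism arguments are also in the same spirit. However, the well-definedness argument has a genuine gap at exactly the point the paper flags as the subtle one.

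When you treat homotopy invariance, you apply Lemma~\ref{lem:existence-small-def} to the definable homotopy $H:|P\times I|_{M}\to\pr^{-1}(O)$. This replaces $H$ by $H\circ\psi$ for some normal triangulation $(T,\psi)$ of $P\times I$. The restriction of $H\circ\psi$ to the bottom face $P\times\{0\}$ is then $f_{0}\circ\psi_{0}$ (where $\psi_{0}$ is the induced normal triangulation of that face), \emph{not} $f_{0}$. So the restriction $(H\circ\psi)^{*}(-,0)$ of your approximation is a $\U$-approximation of $f_{0}\circ\psi_{0}$, not of $f_{0}$, and a fortiori not of the small representative $f_{0}'=f_{0}\circ\phi_{0}$ used to define $\pr_{O}^{P}([f_{0}])$. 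You then invoke Lemma~\ref{lem:same-approx-homotopic}, but that lemma applies to two approximations of the \emph{same} map; $f_{0}\circ\psi_{0}$ and $f_{0}\circ\phi_{0}$ are different maps, so it does not apply directly. This is precisely the reason the paper insists on the smallness property rather than mere approximability, and the reason the map $\pr_{O}^{P}$ is only characterized on small representatives, as the paper explicitly warns in the discussion before Definition~\ref{def:defofp}.

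To close the gap, one must run the argument of Lemma~\ref{lem:well-defined}: take a common refinement $P'$ of the subdivisions involved, observe that normal triangulations fix the simplexes of $P'$, use the $(\pr^{-1}(\U),P')$-smallness of $f_{0}\circ\phi_{0}$ to conclude that $f_{0}\circ\phi_{0}\circ\psi_{0}$ is $\pr^{-1}(\U)$-close to $f_{0}\circ\phi_{0}$, then pass through Lemma~\ref{lem:star-close} to see that the two approximations are $\St(\U)$-close, and finally apply Proposition~\ref{prop:close_homotopic} (recall that $\U$ star-refines a good cover). Note that the smallness (as opposed to approximability) of $f_{0}\circ\phi_{0}$ is essential to see that precomposition with $\psi_{0}$ moves it only $\pr^{-1}(\U)$-close; this is the step your argument omits. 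Your naturality argument also diverges from the paper's and is somewhat loosely specified: the paper shows the image of the $\V$-approximation lies in $U$ by further refining $\V$ by the complement of the compact set $\pr\circ f(|P|_{M})$, whereas your construction of $\U$ as a union of two families does not obviously give that every $\U$-element is contained in a $\V$-element, which you need for a $\U$-approximation to be a $\V$-approximation. That part is repairable, but the homotopy invariance step needs the more careful argument sketched above.
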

In the rest of the section we fix a closed simplicial complex $P$
in $M$ defined over $\Ralg$ and we prove \prettyref{thm:A}. We
shall define a map ${\pr}_{\Y}^{P}:[|P|_{M},\pY]^{\df}\to[|P|_{\R},\Y]$
determined by the following property: if $\U$ is a star-refinement
of a good cover of $\Y$, $f$ is $(\pr^{-1}(\U),P)$-small and $f^{*}$
is a $\U$-approximation of $f$, then $\pr_{\Y}^{P}([f])=[f^{*}]$.
A word of caution is in order: we are not claiming that if $f$ is
$\U$-approximable and $f^{*}$ is an approximation of $f$, then
$\pr_{\Y}^{P}([f])=[f^{*}]$. We are only claiming that this will
be the case if $f$ is $(\pr^{-1}(\U),P)$-small, which is a stronger
property than being $\U$-approximable. The reason for the introduction
of this stronger property, is that we are not able to show that if
two definably homotopic maps are $\U$-approximable, then their approximations
are homotopic. We can do this only if the maps are $(\pr^{-1}(\U),P)$-small.
The formal definition is the following. 
\begin{defn}[Definition of the map $\pr_{\Y}^{P}$]
\label{def:defofp}Let $O\subseteq X/E$ be an open set. Let $\U$
be an open cover of $O$ which is a star-refinement of a good cover
and let $f:|P|_{M}\to\pr^{-1}(O)$ be a definable map. By \prettyref{lem:existence-small-def}
there is a subdivision $P'$ of $P$ and a normal triangulation $(P',\phi)$
of $|P|$ such that $f'=f\circ\phi$ is $(\pr^{-1}(\U),P')$-small.
By \prettyref{lem:existence-of-approximations} $f'$ has a $\U$-approximation
$f'^{*}$. We shall see (\prettyref{lem:well-defined} below) that
the homotopy class $[f'^{*}]$ does not depend on the choice of $P',\phi$
and $f'^{*}$, so we can define $\pr_{\Y}^{P}([f])=[f'^{*}]$. 
\end{defn}
To prove that the definition is sound we need the following. 
\begin{lem}
\label{lem:star-close}Let $f_{0,}f_{1}:|P|_{M}\to X$ be definable
maps and let $f_{0}^{*}$ and $f_{1}^{*}$ be $\U$-approximations
of $f_{0},f_{1}$ respectively. If $f_{0},f_{1}$ are $\pr^{-1}(\U)$-close,
then $f_{0}^{*}$ and $f_{1}^{*}$ are $\St(\U)$-close. 
\end{lem}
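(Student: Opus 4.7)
The plan is to fix $x \in |P|_{\R}$, lift it to a point $z \in |P|_M$ with $\st(z) = x$, and then chain the three given closeness hypotheses at $z$ to produce a single element of $\St(\U)$ containing both $f_0^*(x)$ and $f_1^*(x)$.

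First I would note that the standard part map restricts to a surjection $|P|_M \to |P|_{\R}$. Indeed, since $P$ is defined over $\Ralg$, every $x \in |P|_{\R}$ lies in some (open) simplex $\sigma$ of $P$ and is a convex combination $\sum_i \mu_i a_i$ of the vertices of $\sigma$ with real coefficients $\mu_i$; a routine saturation argument yields $\lambda_i \in M$ with $\st(\lambda_i) = \mu_i$, $\sum_i \lambda_i = 1$, and $0 \leq \lambda_i \leq 1$. Then $z := \sum_i \lambda_i a_i \in |P|_M$ satisfies $\st(z) = x$, because the vertices $a_i$ have real algebraic coordinates and hence are fixed by $\st$.

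Now fix such a $z$. From the hypothesis that $f_0^*$ is a $\U$-approximation of $f_0$, applied at the point $z$, there is $U_0 \in \U$ containing both $\pr(f_0(z))$ and $f_0^*(\st(z)) = f_0^*(x)$. Similarly there is $U_1 \in \U$ containing both $\pr(f_1(z))$ and $f_1^*(x)$. From the $\pr^{-1}(\U)$-closeness of $f_0$ and $f_1$ at $z$, there is $U \in \U$ containing both $\pr(f_0(z))$ and $\pr(f_1(z))$. Consequently $U_0 \cap U \neq \emptyset$ (both contain $\pr(f_0(z))$) and $U_1 \cap U \neq \emptyset$ (both contain $\pr(f_1(z))$), so $U_0, U_1 \subseteq \St_{\U}(U)$. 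Hence $f_0^*(x)$ and $f_1^*(x)$ both lie in $\St_{\U}(U) \in \St(\U)$, which is exactly what $\St(\U)$-closeness requires.

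There is no substantial obstacle to overcome: the argument is a short diagram chase through the three hypotheses, combined with the defining property of a star. The only mildly technical step is the surjectivity of $\st$ on $|P|_M$, which is routine once $P$ is defined over $\Ralg$, and this is precisely why the section fixes such a $P$.
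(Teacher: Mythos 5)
Your proof is correct and follows essentially the same route as the paper's: lift a real point to $|P|_M$, chain the two approximation hypotheses with the $\pr^{-1}(\U)$-closeness hypothesis to obtain three overlapping members of $\U$, and observe that they all lie in the star of the middle one. You spell out the surjectivity of $\st$ on $|P|_M$ explicitly, which the paper takes for granted; that detail is correct and does no harm.
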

\begin{proof}
Let $y\in|P|_{\R}$ and let $x\in|P|_{M}$ be such that $\st(x)=y$.
By definition of approximation $f_{0}^{*}(y)$ is $\U$-close to $\pr(f_{0}(x))$,
which by hypothesis is $\U$-close to $\pr(f_{1}(x))$, which in turn
is $\U$-close to $f_{1}^{*}(y)$. We deduce that $f_{0}^{*}(y)$
is $\St(\U)$-close to $f_{1}^{*}(y)$.
\end{proof}
We can now finish the proof that \prettyref{def:defofp} is sound. 
\begin{lem}
\label{lem:well-defined}Let $\U$ be a star-refinement of a good
cover of $\Y$. Let $f_{0},f_{1}:|P|_{M}\to\pY$ be definably homotopic
definable continous maps and let $(P_{0},\phi_{0})$ and $(P_{1},\phi_{1})$
be two normal triangulations of $P$ such that $f_{0}\circ\phi_{0}$
is $(\pr^{-1}(\U),P_{0})$-small and $f_{1}\circ\phi_{1}$ is $(\pr^{-1}(\U),P_{1})$-small.
Now let $(f_{0}\circ\phi_{0})^{*}$ and $(f_{1}\circ\phi_{1})^{*}$
be $\U$-approximations of $f_{0}\circ\phi_{0}$ and $f_{1}\circ\phi_{1}$
respectively. Then $(f_{0}\circ\phi_{0})^{*}$ and $(f_{1}\circ\phi_{1})^{*}$
are homotopic.
\end{lem}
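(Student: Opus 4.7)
The plan is to use the given definable homotopy between $f_{0}$ and $f_{1}$, combined with the fact that $\phi_{0}$ and $\phi_{1}$ are each definably homotopic to the identity, to first construct a definable homotopy $H:|P|_{M}\times I\to\pY$ with $H(x,0)=f_{0}(\phi_{0}(x))$ and $H(x,1)=f_{1}(\phi_{1}(x))$. I then want to build a $\U$-approximation of a suitably small version of $H$ on the prism $|P|_{\R}\times I$ and show that its two boundary restrictions are homotopic to $(f_{0}\circ\phi_{0})^{*}$ and $(f_{1}\circ\phi_{1})^{*}$ respectively.

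To carry this out I will fix a simplicial complex $\Pi$ over $\Ralg$ whose geometric realization is $|P|_{M}\times I$ and which carries $P_{0}\times\{0\}$ and $P_{1}\times\{1\}$ as subcomplexes of its two boundary faces (for instance, the prism triangulation of a common subdivision of $P_{0}$ and $P_{1}$). Applying \prettyref{lem:existence-small-def} to $H$ with the cover $\pr^{-1}(\U)$ yields a subdivision $\Pi'$ of $\Pi$ over $\Ralg$ and a normal triangulation $(\Pi',\Phi)$ such that $H\circ\Phi$ is $(\pr^{-1}(\U),\Pi')$-small. Choosing $\Pi'$ to be compatible also with the two boundary subcomplexes will ensure that $\Phi$ preserves each boundary face and restricts there to a normal triangulation of $P_{0}$ (respectively $P_{1}$). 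Then \prettyref{lem:existence-of-approximations} produces a $\U$-approximation $K:|\Pi'|_{\R}\cong|P|_{\R}\times I\to\Y$ of $H\circ\Phi$, and $K$ is already a continuous homotopy between its boundary restrictions $K|_{t=0}$ and $K|_{t=1}$, which are in turn $\U$-approximations of $(f_{0}\circ\phi_{0})\circ\Phi|_{t=0}$ and $(f_{1}\circ\phi_{1})\circ\Phi|_{t=1}$.

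It remains to compare $K|_{t=0}$ with $(f_{0}\circ\phi_{0})^{*}$ (the case $t=1$ is symmetric). Since $f_{0}\circ\phi_{0}$ is $(\pr^{-1}(\U),P_{0})$-small by hypothesis, and since property (3) of the normal triangulation $\Phi|_{t=0}$ forces each $x\in|P|_{M}$ to lie in the same simplex $\sigma\in P_{0}$ as $\Phi|_{t=0}(x)$, the maps $f_{0}\circ\phi_{0}$ and $(f_{0}\circ\phi_{0})\circ\Phi|_{t=0}$ will be $\pr^{-1}(\U)$-close. \prettyref{lem:star-close} then gives that $K|_{t=0}$ and $(f_{0}\circ\phi_{0})^{*}$ are $\St(\U)$-close; since $\U$ star-refines a good cover $\V$ they are $\V$-close, and \prettyref{prop:close_homotopic} supplies a homotopy between them. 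Concatenating the three homotopies yields $(f_{0}\circ\phi_{0})^{*}\simeq K|_{t=0}\simeq K|_{t=1}\simeq(f_{1}\circ\phi_{1})^{*}$, as required. The main technical obstacle I anticipate is the bookkeeping around the compatibility of the prism normal triangulation with both boundary normal triangulations simultaneously, and in particular verifying that $\Phi$ restricts to a self-homeomorphism of each boundary face with the desired normal triangulation property; this should follow from the freedom in \prettyref{lem:existence-small-def} to prescribe compatibility with a finite list of definable subsets, together with property (3) applied inside the boundary subcomplexes.
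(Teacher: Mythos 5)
Your proposal follows essentially the same route as the paper's proof: build a definable homotopy $H$ between $f_{0}\circ\phi_{0}$ and $f_{1}\circ\phi_{1}$ on the prism, take a normal triangulation of the prism (built over a common subdivision $P'$ of $P_{0}$ and $P_{1}$) making $H$ small, apply \prettyref{lem:existence-of-approximations} to get a classical homotopy $K$, and then use property (3) of normal triangulations to get $\pr^{-1}(\U)$-closeness of $f_{0}\circ\phi_{0}$ with its reparametrization, followed by \prettyref{lem:star-close} and \prettyref{prop:close_homotopic} to splice in homotopies at the two ends. The only wrinkle worth noting is that the restriction of $\Phi$ to the $t=0$ face is a normal triangulation of $P'$, not of $P_{0}$ itself; this is harmless because $P'$ refines $P_{0}$, so property (3) for $P'$ still puts $x$ and $\Phi|_{t=0}(x)$ in a common simplex of $P_{0}$, which is exactly what the $(\pr^{-1}(\U),P_{0})$-smallness hypothesis needs — the paper handles this by explicitly observing that $f_{i}\circ\phi_{i}$ remains $(\pr^{-1}(\U),P')$-small after refinement, a step you implicitly use and could state.
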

\begin{proof}
First note that $f_{0}\circ\phi_{0}$ and $f_{1}\circ\phi_{1}$ are
definably homotopic, because so are $f_{0}$ and $f_{1}$ and $\phi_{0}$,
$\phi_{1}$ are both definably homotopic to the identity. Let $H:|P\times I|_{M}\to\pY$
be a definable homotopy between $f_{0}\circ\phi_{0}=H_{0}$ and $f_{1}\circ\phi_{1}=H_{1}$.
Let $P'$ be a common refinement of $P_{0}$ and $P_{1}$ (for the
existence see for instance \cite[Cor.\ 9.5.8]{Mukherjee2015}). 

Now let $(T,\psi)$ be a normal triangulation of $P'\times I$ such
that $H\circ\psi$ is $(\pr^{-1}(\U),T)$-small. Notice that $T$
induces two subdivisions $P'_{0}$ and $P'_{1}$ of $P'$ such that
$(P'_{0}\times0)\cup(P'_{1}\times1)$ is a subcomplex of $T$. Notice
that both $f_{0}\circ\phi_{0}$ and $f_{1}\circ\phi_{1}$ are $(\pr^{-1}(\U),P')$-small,
because the smallness property is preserved by refinining the triangulations.
Moreover, the restriction of $\psi$ to the subcomplex $(P'_{0}\times0)\cup(P'_{1}\times1)$
induces two normal triangulations $(P'_{0},\psi_{0})$ and $(P'_{1},\psi_{1})$
of $P'$, namely $\psi_{0}(x)=y$ if and only if $\psi(x,0)=(y,0)$,
and similarly for $\psi_{1}$. By the properties of normal triangulations,
for each $\sigma\in P'$, we have $\psi_{0}(\sigma)=\sigma=\psi_{1}(\sigma)$,
so $f_{0}\circ\phi_{0}\circ\psi_{0}$ and $f_{1}\circ\phi_{1}\circ\psi_{1}$
are also $(\pr^{-1}(\U),P')$-small. 

Now let $(H\circ\psi)^{*}:|P\times I|_{\R}\to O$ be a $\U$-approximation
of $H\circ\psi$. Then $(H\circ\psi)^{*}$ is a homotopy between two
maps, which are easily seen to be $\U$-approximations of $f_{0}\circ\phi_{0}\circ\psi_{0}$
and $f_{1}\circ\phi_{1}\circ\psi_{1}$ (the two maps induced by $H\circ\psi$
by restriction), so we may call them $(f_{0}\circ\phi_{0}\circ\psi_{0})^{*}$
and $(f_{1}\circ\phi_{1}\circ\psi_{1})^{*}$ respectively. Since $\psi_{0}$
fixes the simplexes of $P'$ and $f_{0}\circ\phi_{0}$ is $(\pr^{-1}(\U),P')$-small,
we have that $f_{0}\circ\phi_{0}\circ\psi_{0}$ is $\U$-close to
$f_{0}\circ\phi_{0}$ (because any point of $|P|_{M}$ belongs to
some $\sigma\in P'$ which is mapped into some element of $\pr^{-1}(\U)$
by both maps). By \prettyref{lem:star-close} it follows that $(f_{0}\circ\phi_{0}\circ\psi_{0})^{*}$
is $\St(\U)$-close to $(f_{0}\circ\phi_{0})^{*}$ hence homotopic
to it. Similarly $(f_{1}\circ\phi_{1}\circ\psi_{1})^{*}$ is homotopic
to $(f_{1}\circ\phi_{1})^{*}$ and composing the homotopies we obtain
the desired result. 
\end{proof}
\begin{lem}
Points (1) and (2) of \prettyref{thm:A} hold. 
\end{lem}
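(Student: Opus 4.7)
The plan is to verify that the map $\pr_\Y^P$ introduced in \prettyref{def:defofp} is well-defined (point (1)), and then to derive naturality (point (2)) by a careful choice of compatible star-refinements of good covers on $U$ and $V$. The central fact I exploit is that both sides of the construction are insensitive to passing to a finer star-refinement of a good cover, together with the existence of good covers refining prescribed open covers of triangulable open sets.

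For point (1), I first fix a star-refinement $\U$ of a good cover of $\Y$. Given a definable $f:|P|_M\to\pY$, \prettyref{lem:well-defined} applied with $f_0=f_1=f$ shows that any two admissible triples $(P',\phi,f'^*)$ produced by \prettyref{lem:existence-small-def} and \prettyref{lem:existence-of-approximations} yield homotopic approximations, and the same lemma in its full strength shows that the class $[f'^*]$ depends only on $[f]\in[|P|_M,\pY]^{\df}$. The remaining point is the choice of $\U$. Given two star-refinements $\U_1,\U_2$ of good covers of $\Y$, I apply \prettyref{assu:pushforward-assumptions} to the common intersection cover $\{U_1\cap U_2\suchthat U_i\in\U_i\}$ to obtain a good cover of $\Y$ refining it, and star-refine once more to obtain $\U_3$. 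Then $\U_3$ refines both $\U_1$ and $\U_2$, so any triple $(P',\phi,f'^*)$ built from $\U_3$ is automatically admissible for the constructions based on $\U_1$ and $\U_2$ as well, since $(\pr^{-1}(\U_3),P')$-smallness entails $(\pr^{-1}(\U_i),P')$-smallness and a $\U_3$-approximation is a $\U_i$-approximation. Hence the three classes coincide, and $\pr_\Y^P$ is unambiguously defined.

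For point (2), let $f:|P|_M\to\pr^{-1}(U)$ and write $i_{\pr^{-1}(U)}:\pr^{-1}(U)\hookrightarrow\pr^{-1}(V)$ and $i_U:U\hookrightarrow V$ for the inclusions. I fix an arbitrary star-refinement $\U_V$ of a good cover of $V$ and consider the induced open cover $\U_V|_U:=\{V_0\cap U\suchthat V_0\in\U_V\}$ of $U$. Applying \prettyref{assu:pushforward-assumptions} to $\U_V|_U$ and star-refining the resulting good cover, I obtain a star-refinement $\U_U$ of a good cover of $U$ with the key property that each element of $\U_U$ is contained in some element of $\U_V$. Compute $\pr_U^P([f])$ using $\U_U$: pick a normal triangulation $(P',\phi)$ such that $f'=f\circ\phi$ is $(\pr^{-1}(\U_U),P')$-small and a $\U_U$-approximation $f'^*:|P|_\R\to U$, so that $\pr_U^P([f])=[f'^*]$. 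The containment $\U_U\prec\U_V$ then gives immediately that $i_{\pr^{-1}(U)}\circ f'$ is $(\pr^{-1}(\U_V),P')$-small as a map into $\pr^{-1}(V)$, and that $i_U\circ f'^*$ is a $\U_V$-approximation of $i_{\pr^{-1}(U)}\circ f'$. By the cover-independence established in point (1), $\pr_V^P([i_{\pr^{-1}(U)}\circ f])=[i_U\circ f'^*]=i_U(\pr_U^P([f]))$, which is exactly the commutativity of the naturality square.

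The main obstacle is the independence of $\pr_\Y^P$ from the auxiliary cover $\U$: \prettyref{lem:well-defined} is phrased with respect to a single $\U$, so the cover-independence has to be recovered by the common-refinement argument powered by \prettyref{assu:pushforward-assumptions}. Once this is in place, the naturality square is essentially reduced to the observation that both $(\pr^{-1}(\U),P')$-smallness and being a $\U$-approximation are preserved under passing from a finer cover on $U$ to a coarser one on $V$ through the inclusion.
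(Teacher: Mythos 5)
Your proof is correct, and it takes a route that is partly different from the paper's and in one respect more careful. For point (1), the paper's \prettyref{def:defofp} fixes an auxiliary cover $\U$ and \prettyref{lem:well-defined} shows independence only of the choice of $P'$, $\phi$ and $f'^*$, leaving independence of $\U$ itself implicit; your common-refinement argument (take a good cover of $\Y$ refining $\U_1\wedge\U_2$ via \prettyref{assu:pushforward-assumptions}, then star-refine) makes this explicit and is exactly the kind of step the paper silently relies on in the proof of naturality, so it is a genuine improvement to spell it out. For point (2), you and the paper go in opposite directions. You fix $\U_V$ on $V$, pull it back to $U$, refine to a star-refinement $\U_U$ of a good cover of $U$ with $\U_U\prec\U_V$, and observe that a $\U_U$-approximation composed with $i_U$ is automatically a $\U_V$-approximation. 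The paper instead fixes $\U$ on $U$ and then chooses $\V$ on $V$ refining $\U\cup\{C^\complement\}$, where $C=\pr\circ f(|P|_M)$: since any element of $\V$ meeting $C$ must then lie inside some element of $\U$, a $\V$-approximation of $f'$ automatically has image in $U$ and is a $\U$-approximation. Your argument has the advantage that the auxiliary cover on $U$ does not depend on the particular map $f$; both arguments ultimately rest on the cover-independence you establish in point (1), and both are valid.
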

\begin{proof}
We have already proved that $\pr_{\Y}^{P}$ is well defined and we
need to establish the naturality with respect to inclusions of open
sets $U\subseteq V\subseteq X/E$. Let $f:|P|_{M}\to\pr^{-1}(U)\subseteq\pr^{-1}(V)$
be a continuos definable map and notice that $C=\pr\circ f(|P|_{M})$
is a closed set. By \prettyref{thm:A}(1), there are open covers $\U$
of $U$ and $\V$ of $V$ which star-refine a good cover of $U$ and
$V$ respectively. We can further assume that $\V$ refines $\U\cup\{C^{\complement}\}$.
By \prettyref{lem:existence-small-def} there is a definable homeomorphism
$\psi:|P|_{M}\to|P|_{M}$ definably homotopic to the identity such
that $f':=f\circ\psi$ is $\V$-approximable (and clearly definably
homotopic to $f$). Since $\psi(|P|_{M})=|P|_{M}$, we have $\pr\circ f'(|P|_{M})=C$.
Let $f'^{*}:|P|_{\R}\to V$ be a $\V$-approximation of $f'$. Then
by definition $\pr_{V}^{P}([f])=[f'^{*}]$. Now fix some $x\in|P|_{M}$,
and using the definition of $\V$-approximation find $V'\in\V$ such
that both $(f'^{*}\circ\st)(x)\in V'$ and $(\pr\circ f')(x)\in V'$.
Notice that the latter implies that $V'$ cannot be contained in $C^{\complement}$,
hence it is contained in some element of $\U$. This shows that $f'^{*}$
has image contained in $U$ and is a $\U$-approximation of $f'$.
It follows that $i_{U}\circ\pr_{U}^{P}([f])=\pr_{V}^{P}\circ i_{\pr^{-1}(U)}([f])=[f'^{*}]$. 
\end{proof}
\begin{lem}
\prettyref{thm:A}(3) holds, namely we can work with $\emptyset$-semialgebraic
sets instead of simplicial complexes. 
\end{lem}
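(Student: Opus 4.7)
The plan is to reduce (3) to part (1) by transporting everything along an $\Ralg$-triangulation. Given a $\emptyset$-semialgebraic set $Y\subseteq M^{k}$, I would first invoke the triangulation theorem (\prettyref{fact:triangulation}) together with the model-completeness argument of \prettyref{prop:algebraic-subdivision} to produce a closed simplicial complex $P$ defined over $\Ralg$ and an $\Ralg$-definable homeomorphism $\phi:|P|_{M}\to Y$; the same defining formulas, interpreted in $\R$, yield a homeomorphism $\phi_{\R}:|P|_{\R}\to Y(\R)$. Then I would set
\[
\pr_{O}^{Y}\bigl([f]\bigr)\;:=\;\bigl[f^{*}\circ\phi_{\R}^{-1}\bigr],
\]
where $f^{*}$ is any representative of $\pr_{O}^{P}([f\circ\phi])\in[|P|_{\R},O]$, and recover the naturality square of (2) for $Y$ by conjugating the square for $P$ with $\phi$ and $\phi_{\R}$.

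The substantive thing to verify is that $\pr_{O}^{Y}$ does not depend on the choice of triangulation. For this I would establish the following naturality property of $\pr_{O}^{P}$: for any $\Ralg$-definable continuous map $\psi:|P_{1}|_{M}\to|P_{2}|_{M}$ and any definable continuous $g:|P_{2}|_{M}\to\pr^{-1}(O)$,
\[
\pr_{O}^{P_{1}}\bigl([g\circ\psi]\bigr)\;=\;\bigl[g^{*}\circ\psi_{\R}\bigr]\quad\text{in}\quad[|P_{1}|_{\R},O],
\]
where $g^{*}$ represents $\pr_{O}^{P_{2}}([g])$ and $\psi_{\R}:|P_{1}|_{\R}\to|P_{2}|_{\R}$ denotes the $\R$-interpretation of the defining formula of $\psi$. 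The proof would proceed by first replacing $g\circ\psi$ by a homotopic $(\pr^{-1}(\U),P_{1})$-small map via \prettyref{lem:existence-small-def}, for $\U$ a star-refinement of a good cover of $O$, and then observing that if $g^{*}$ is a $\U$-approximation of $g$ in the sense of \prettyref{def:approximation}, then $g^{*}\circ\psi_{\R}$ is a $\U$-approximation of $g\circ\psi$; the conclusion then follows from \prettyref{def:defofp} and \prettyref{lem:well-defined}. Independence from the triangulation is then immediate: given two triangulations $\phi_{1}:|P_{1}|_{M}\to Y$ and $\phi_{2}:|P_{2}|_{M}\to Y$, the property applied to $\psi:=\phi_{2}^{-1}\circ\phi_{1}$ shows that the two candidate definitions of $\pr_{O}^{Y}([f])$ coincide after composing out $\psi_{\R}=\phi_{2,\R}^{-1}\circ\phi_{1,\R}$.

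The one non-formal ingredient, and the place I expect the main (though mild) obstacle, is the compatibility $\st\circ\psi=\psi_{\R}\circ\st$ for $\Ralg$-definable continuous maps $\psi$, which is precisely what licences the passage from $g$ to $g\circ\psi$ at the level of approximations. This comes from the closedness and $\Ralg$-definability of the graph of $\psi$: its standard part is contained in the graph of $\psi_{\R}$, in the manner already used implicitly around \prettyref{prop:standard-p-trivial}. The pointed version required for part (4) follows verbatim by carrying a base point throughout, so (3) holds as stated.
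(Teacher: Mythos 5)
Your approach is essentially the same as the paper's, which simply fixes a $\emptyset$-definable (hence $\Ralg$-definable) triangulation $\phi:|P|_{M}\to Y$, observes that $\phi$ and $\phi_{\R}$ induce bijections $[Y(M),\pr^{-1}(U)]^{\df}\cong[|P|_{M},\pr^{-1}(U)]^{\df}$ and $[Y(\R),U]\cong[|P|_{\R},U]$, and then cites parts (1)--(2); the paper's proof is just those two lines. Two small points where you go beyond (or slightly off from) the paper. First, you propose getting the $\Ralg$-complex by invoking \prettyref{prop:algebraic-subdivision}, but that proposition is about making a \emph{subdivision} of an existing $\Ralg$-complex real algebraic; the cleaner route (and the one implicitly used in the paper) is to apply the triangulation theorem directly in the prime model $\Ralg$, since $Y$ is $\emptyset$-definable. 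Second, you additionally verify that $\pr_{O}^{Y}$ is independent of the chosen triangulation. The paper does not attempt this — the statement of (3) asserts only existence of maps with the naturality of (2), not canonicity — so this is extra credit rather than a needed step. Your verification is in the right spirit and correctly isolates $\st\circ\psi=\psi_{\R}\circ\st$ as the key compatibility for $\Ralg$-definable $\psi$, but the details are a bit slipperier than you suggest: the normal-triangulation homeomorphisms $\phi_{1},\phi_{2}$ entering \prettyref{def:defofp} need not be $\Ralg$-definable, so the compatibility does not apply to them, and you must invoke \prettyref{lem:well-defined} more carefully (e.g.\ by choosing the subdivisions so that $\psi$ is simplicial, taking the identity as the normal-triangulation homeomorphism, and using that approximations are a property of the map and the cover alone, independent of the triangulation). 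None of this affects the correctness of the statement you were asked to prove.
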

\begin{proof}
If $Z$ is a $\emptyset$-semialgebraic set, there is a $\emptyset$-definable
homeomorphism $f:|P|\to Z$ where  $P$ is a simplicial complex $P$
with real algebraic vertices. We have induced bijections $f_{M}^{*}:[Z(M),\pr^{-1}(U)]^{\df}\simeq[|P|_{M},\pr^{-1}(U)]^{\df}$
and $f_{\R}^{*}:[|Z|_{\R},U]\simeq[|P|_{\R},U]$. The results now
follows from the previous points of the theorem.
\end{proof}
\begin{lem}
\prettyref{thm:A}(4) holds, namely we can fix a base point and work
with relative homology. 
\end{lem}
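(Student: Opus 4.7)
The plan is to revisit the definition and verification of $\pr_{O}^{P}$ and observe that every step can be carried out while fixing a chosen base point. Fix base points $z_{0}\in|P|$, $x_{0}\in\pr^{-1}(O)$, and put $y_{0}:=\pr(x_{0})\in O$. By \prettyref{fact:triangulation} combined with \prettyref{prop:algebraic-subdivision}, applied once at the outset with $\{z_{0}\}$ as one of the compatible sets, we may assume that $z_{0}$ is a vertex of $P$ with real algebraic coordinates, so that the canonical identification $\st(z_{0})=z_{0}$ holds in the zero-skeleton.

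With this choice, the approximation built in \prettyref{lem:existence-of-approximations} already sends $z_{0}$ to $y_{0}$: the zero-skeleton formula $f^{*(0)}(\st(v))=\pr(f(v))$ gives $f^{*}(z_{0})=\pr(x_{0})=y_{0}$, and the inductive extension in \prettyref{lem:good_cover_extension}, which fills in higher-dimensional simplexes from their boundaries, never alters values previously fixed on lower skeleta. All the auxiliary objects needed elsewhere (subdivisions, normal triangulations from \prettyref{prop:normal-small}, common refinements, the simplicial isomorphisms produced by \prettyref{prop:algebraic-subdivision}) can be arranged to fix $z_{0}$, because each of them respects the zero-skeleton and $z_{0}$ is a vertex.

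The only point that requires real care is the homotopy constructed in \prettyref{lem:well-defined}: it must be a homotopy \emph{relative to} $z_{0}$. For this we apply \prettyref{lem:good_cover_extension} to the complex $T$ triangulating $P'\times I$, taking the pre-defined subcomplex to be
\[
L=\bigl(|P'_{0}|\times\{0\}\bigr)\cup\bigl(|P'_{1}|\times\{1\}\bigr)\cup\bigl(\{z_{0}\}\times I\bigr),
\]
and defining the map on $L$ by the two given approximations on the horizontal faces, together with the constant map $y_{0}$ on $\{z_{0}\}\times I$ (which is a $1$-simplex of the standard prism triangulation since $z_{0}$ is a vertex of $P'$). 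The extension supplied by the lemma then restricts to the given data on $L$, yielding a based homotopy between the two approximations. Naturality with respect to inclusions (part (2)) and the passage to $\emptyset$-semialgebraic sets (part (3)) carry over verbatim, since both arguments are local on simplexes and never alter the base vertex.

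For $n>0$, the induced map $\pr_{O}^{S^{n}}:\pi_{n}^{\df}(\pr^{-1}(O))\to\pi_{n}(O)$ is a group homomorphism: the group operation on $\pi_{n}$ is defined by concatenation along a coordinate of the standard cube model, and the triangulations used in \prettyref{lem:existence-small-def} and \prettyref{def:defofp} can always be chosen to contain the dividing hyperplane as a subcomplex, so that an approximation of $f\cdot g$ is obtained as $f^{*}\cdot g^{*}$. The only real obstacle throughout is combinatorial bookkeeping — tracking the base vertex across all the subdivisions, refinements, and extensions — and it is resolved uniformly by ensuring, from the start, that every triangulation has $z_{0}$ among its vertices and that every instance of \prettyref{lem:good_cover_extension} is applied with a subcomplex $L$ containing $\{z_{0}\}$ (or $\{z_{0}\}\times I$, for homotopies).
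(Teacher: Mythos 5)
Your proposal is correct and follows the same route as the paper's (one-sentence) argument: you simply check that every construction used to define and verify $\pr_{O}^{P}$ — the normal triangulations, the zero-skeleton definition of the approximation, the inductive extension via \prettyref{lem:good_cover_extension}, and the homotopy built in \prettyref{lem:well-defined} — can be carried out compatibly with a fixed base vertex $z_{0}$ (with real algebraic coordinates, so that it is meaningful in both $|P|_{M}$ and $|P|_{\R}$). The paper leaves all of this as an assertion; you fill in the bookkeeping, including the key point that the subcomplex $L$ fed to \prettyref{lem:good_cover_extension} should include $\{z_{0}\}$ (resp.\ $\{z_{0}\}\times I$ for homotopies), which indeed makes the resulting extensions and homotopies base-point preserving.

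One small remark: the last paragraph of your proposal (group structure on $\pi_{n}$ for $n>0$) is really part (5) of Theorem~A, not part (4), and the paper proves it separately in \prettyref{lem:pi-n-proof} using the wedge description $S^{n}\to S^{n}\vee S^{n}$ rather than the cube/concatenation model you invoke. Both are standard and equivalent, but this portion is not needed to establish the statement at hand.
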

\begin{proof}
It suffices to notice that all the constructions in the proofs can
equivalently be carried out for spaces with base points.
\end{proof}
\begin{lem}
\label{lem:pi-n-proof}\prettyref{thm:A}(5) holds, namely for any
open set $\Y\subseteq X/E$ there is a well defined group homomorphism
\[
{\pr}_{\Y}^{P}:\pi_{n}(\up O)^{\df}\to\pi_{n}(O).
\]
\end{lem}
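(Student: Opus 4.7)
The existence of a well-defined set-map $\pr_\Y^{S^n}\colon\pi_n(\pr^{-1}(O))^{\df}\to\pi_n(O)$ is immediate from parts (3) and (4) of \prettyref{thm:A}, applied to the $\emptyset$-semialgebraic pointed space $(S^n,s_0)$. The remaining content is to check that $\pr_\Y^{S^n}$ respects the group law when $n>0$. My plan is to work with the standard cube model $(I^n,\partial I^n)$ instead of $(S^n,s_0)$, so that the multiplication $[f]\cdot[g]$ is represented by the usual concatenation
\[
(f\cdot g)(t_1,\ldots,t_n)=\begin{cases}f(2t_1,t_2,\ldots,t_n)&t_1\leq 1/2,\\ g(2t_1-1,t_2,\ldots,t_n)&t_1\geq 1/2,\end{cases}
\]
both in the classical and in the definable category, and to show compatibility of the approximation construction with this concatenation.

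Fix pointed definable maps $f,g\colon(I^n,\partial I^n)\to(\pr^{-1}(O),x_0)$. First pick an open cover $\U$ of $O$ which star-refines a good cover of $O$ (possible by \prettyref{assu:pushforward-assumptions}). Using \prettyref{lem:existence-small-def} I replace $f$ and $g$, up to definable pointed homotopy, by maps $f',g'$ which are $(\pr^{-1}(\U),P)$-small with respect to some common subdivision $P$ of the standard simplicial complex on $I^n$ defined over $\Ralg$. By taking a further common refinement I can arrange that the hyperplane $\{t_1=1/2\}$ is cut out by a subcomplex of $P$, so that $P$ is obtained by glueing two copies of a simplicial complex $Q$ on $I^n$ (rescaled by a factor $1/2$ in the first coordinate) along a common face, in such a way that $f',g'$ correspond, via the affine rescaling, to $(\pr^{-1}(\U),Q)$-small maps $\tilde f',\tilde g'\colon|Q|_M\to\pr^{-1}(O)$. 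Then the concatenation $f'\cdot g'\colon|P|_M\to\pr^{-1}(O)$ is automatically $(\pr^{-1}(\U),P)$-small, still pointed, and pointedly definably homotopic to $f\cdot g$.

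Now I construct $\U$-approximations compatibly. By \prettyref{lem:existence-of-approximations} and the proof of its construction (which builds the approximation inductively on skeleta via \prettyref{lem:good_cover_extension}), I can first pick $\U$-approximations $\tilde f'^{\,*},\tilde g'^{\,*}\colon|Q|_\R\to O$ of $\tilde f'$ and $\tilde g'$, arranging that on the common face $\{t_1=1\}$ of the first copy and $\{t_1=0\}$ of the second copy both land in the base point $\pr(x_0)$ (this is possible because on that face the maps take the constant value $\pr(x_0)\in O$, and the skeletal extension in \prettyref{lem:good_cover_extension} can be chosen to preserve prescribed values on a subcomplex). Glueing produces a map $(f'\cdot g')^{*}\colon|P|_\R\to O$ which is simultaneously a $\U$-approximation of $f'\cdot g'$ and equal, by construction, to the classical concatenation $\tilde f'^{\,*}\cdot\tilde g'^{\,*}$. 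By \prettyref{def:defofp} and \prettyref{lem:well-defined} we obtain
\[
\pr_\Y^{S^n}([f]\cdot[g])=[(f'\cdot g')^{*}]=[\tilde f'^{\,*}]\cdot[\tilde g'^{\,*}]=\pr_\Y^{S^n}([f])\cdot\pr_\Y^{S^n}([g]).
\]
The main obstacle is the compatibility of the approximation on the interface hyperplane $\{t_1=1/2\}$: one must ensure that the separately chosen approximations on the two halves agree there. This is handled by the observation that in the pointed setting this face is mapped to the base point both before and after approximation, so there is no matching problem; all the earlier naturality and well-definedness lemmas then apply verbatim to conclude that $\pr_\Y^{S^n}$ is a group homomorphism.
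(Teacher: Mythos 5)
Your argument is correct, but it takes a genuinely different route from the paper's. The paper works entirely with the sphere model and encodes the group operation via the collapse map $\phi:S^n\to S^n/S^{n-1}=S^n\vee S^n$, so that $[f]*[g]=[(f\vee g)\circ\phi]$. The paper's key observation is then simply that if $f^*$ and $g^*$ are $\U$-approximations of $f$ and $g$, then $f^*\vee g^*$ is a $\U$-approximation of $f\vee g$ on the wedge; since $\phi$ is $\emptyset$-semialgebraic and commutes with the standard part map, precomposing gives that $(f^*\vee g^*)\circ\phi_\R$ approximates $(f\vee g)\circ\phi_M$, and the conclusion follows from the definition of $\pr_O^P$. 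This avoids any matching problem at an interface, because in the pointed wedge the two summands are joined only at the base point, where all pointed approximations agree for free.

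Your cube-model argument is sound, but it has to confront precisely the interface-matching issue that the wedge model sidesteps, and you deal with it by forcing both half-approximations to be constant on $\{t_1=1/2\}$. That works, but it requires a slightly stronger form of \prettyref{lem:existence-of-approximations} than is stated: you need to run the skeletal extension of \prettyref{lem:good_cover_extension} with $L$ taken to be (the base point together with) the interface face, defining $f^*$ to be the constant map $\pr(x_0)$ on $L$ before extending, and you need to check that this is still a $\U$-approximation of $\tilde f'$ on the face (which holds because $\tilde f'$ is itself constant there). You should also note explicitly that the common subdivision $P$ containing $\{t_1=1/2\}$ as a subcomplex can be chosen over $\Ralg$, which is unproblematic since the hyperplane has rational coefficients. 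None of this is a gap, but it adds moving parts that the paper's wedge formulation eliminates; the trade-off is that the cube concatenation is arguably more concrete and closer to the usual textbook definition of the group law on $\pi_n$.
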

\begin{proof}
We have already proved that there is a natural map ${\pr}_{\Y}^{S^{n}}:\pi_{n}(\up O)^{\df}\to\pi_{n}(O).$
We need to check that this map is a group homomorphism. To this end,
let $S^{n-1}$ be the equator of $S^{n}$. Recall that, given $[f],[g]\in\pi_{n}(\up O)^{\df}$,
where $f,g:S^{n}\to\up O$, the group operation $[f]*[g]$ is defined
as follows. Consider the natural map $\phi:S^{n}\to S^{n}/S^{n-1}=S^{n}\vee S^{n}$,
and let $[f]*[g]=[(f\vee g)\circ\phi]$, where $f\vee g$ maps the
first $S^{n}$ using $f$, and the second using $g$. A similar definition
also works for $\pi_{n}(O)$. Now, we have to check that $\pr_{O}^{S^{n}}([f]*[g])=\pr_{O}^{P}([f])*\pr_{O}^{P}([g])$.

By the triangulation theorem we can identify $S^{n}$ with the realization
of a simplicial complex $P$ defined over $\Ralg$ and, modulo homotopy
and taking a subdivision, we can assume that $f$ and $g$ are $(\pr^{-1}(\U),P)$-small
where $\U$ is an open cover of $O$ star-refinining a good cover.
Let $f^{*}$ and $g^{*}$ be $\U$-approximations of $f,g$ respectively,
so that $\pr_{O}^{S^{n}}([f])=[f^{*}]$ and $\pr_{O}^{S^{n}}([g])=[g^{*}]$
. Now it suffices to observe that $f^{*}\vee g^{*}$ is a $\U$-approximation
of $f\vee g$.
\end{proof}
The proof of \prettyref{thm:A} is now complete. 

\section{Theorem B\label{sec:pullback}}

In this section we work under the following strengthening of \ref{assumptionA}. 
\begin{assumption}
\label{assumptionB}$X/E$ is a triangulable topological space and
each fiber of $\pr:X\to X/E$ is the intersection of a decreasing
sequence of definably contractible open sets. 
\end{assumption}
By \prettyref{prop:pcontinuous} the assumption implies in particular
that $\pr$ is continuos, so we have indeed a strengthening of \ref{assumptionA}.
The above contractibility hypothesis was already exploited in \cite{Berarducci2011,Berarducci2009a,Berarducci2007b}
and is satisfied by the main examples discussed in \prettyref{sec:Standard-part-map}
and \prettyref{sec:Type-definable-groups}. 
\begin{thm}[Theorem B]
\label{thm:B}Assume that $\pr:X\to X/E$ satisfies \ref{assumptionB}.
Then the map $\pr_{\Y}^{P}:[|P|_{M},\pY]^{\df}\to[|P|_{\R},\Y]$ in
\prettyref{thm:A} is a bijection and similarly for pointed spaces.
Thus in particular $\pi_{n}(X)^{\df}\cong\pi_{n}(X/E)$ and more generally
we have a natural isomorphism $\pi_{n}(\pr^{-1}(O))^{\df}\cong\pi_{n}(O)$
for every open subset $O\subseteq X/E$. 
\end{thm}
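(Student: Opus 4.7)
The plan is to show that the map $\pr_{O}^{P}$ from \prettyref{thm:A} is a bijection. Since \prettyref{thm:A} already establishes this map as a natural group homomorphism (for $n>0$) in the pointed setting, a bijective homomorphism is an isomorphism and the asserted isomorphisms $\pi_{n}^{\df}(\pr^{-1}(O))\cong\pi_{n}(O)$ follow at once. Both halves, surjectivity and injectivity, will rely on a lifting construction whose engine is the fiber contractibility of \ref{assumptionB}.

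For \textbf{surjectivity}, given a continuous $g:|P|_{\R}\to O$, I would pick by \prettyref{assu:pushforward-assumptions} a locally finite good cover $\U$ of $O$, take an iterated barycentric subdivision of $P$ over $\Ralg$ (using \prettyref{prop:algebraic-subdivision}) fine enough that each closed simplex $\bar{\sigma}$ has $g(\bar{\sigma})$ contained in some $U_{\sigma}\in\U$, and then build a definable $f:|P|_{M}\to\pr^{-1}(O)$ by induction on skeleta. The key step is the extension across a closed simplex $\bar{\sigma}$: having defined $f$ on $\partial\bar{\sigma}$ definably with image inside a definable contractible open $V\subseteq\pr^{-1}(U_{\sigma})$, I extend $f$ to the cone $\bar{\sigma}$ via the definable contraction of $V$. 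The sets $V$ are extracted from \ref{assumptionB}: picking a point $y_{\sigma}\in U_{\sigma}$, the fiber $\pr^{-1}(y_{\sigma})$ is a decreasing intersection $\bigcap_{n}S_{n}^{y_{\sigma}}$ of definably contractible opens; since $\pr^{-1}(U_{\sigma})$ is $\bigvee$-definable and contains the type-definable set $\pr^{-1}(y_{\sigma})$, saturation yields some $n$ with $S_{n}^{y_{\sigma}}\subseteq\pr^{-1}(U_{\sigma})$. By construction $\pr\circ f$ and $g\circ\st$ are $\U$-close, so $g$ is a $\U$-approximation of $f$ and $\pr_{O}^{P}([f])=[g]$.

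For \textbf{injectivity}, suppose $\pr_{O}^{P}([f_{0}])=\pr_{O}^{P}([f_{1}])$. Taking $\U$-approximations $f_{0}^{*},f_{1}^{*}$ of normal-triangulation refinements of $f_{0},f_{1}$ as in \prettyref{def:defofp}, the hypothesis gives a classical homotopy $H:|P|_{\R}\times I\to O$ between $f_{0}^{*}$ and $f_{1}^{*}$. I would then rerun the surjectivity construction on the prism $|P|_{M}\times I$, equipped with a triangulation over $\Ralg$ adapted to $H$ and to $\U$ and restricting to the triangulations used for $f_{0},f_{1}$ on the two ends, while prescribing the lift to be $f_{0}$ on $|P|_{M}\times\{0\}$ and $f_{1}$ on $|P|_{M}\times\{1\}$. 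This produces a definable homotopy between $f_{0}$ and $f_{1}$, so $[f_{0}]=[f_{1}]$ in the definable category. The pointed version is obtained by carrying out all constructions relative to the base point, and the transfer from simplicial complexes $P$ to general $\emptyset$-semialgebraic sets is automatic via triangulation, just as in \prettyref{thm:A}.

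The \textbf{main obstacle} is the extension step in surjectivity: coherently organizing the inductive choices of points $y_{\sigma}$ and definable contractible opens $V_{\sigma}$ so that each already-constructed boundary $f(\partial\bar{\sigma})$ actually lands in the contractible $V_{\sigma}$ used to extend across $\sigma$. The inductive hypothesis must therefore carry more than just ``$f$ is defined''---it must also control, via the good cover $\U$ and saturation, that the images of the faces fit together inside a single definable contractible open set at each higher level. For injectivity the analogous issue arises when building the prism triangulation so that its restrictions on the two ends are compatible with the normal triangulations already chosen for $f_{0}$ and $f_{1}$; this is a finer version of the compatibility already handled in \prettyref{lem:well-defined}.
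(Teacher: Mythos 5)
Your overall strategy — lift maps and homotopies from $X/E$ to $X$ by induction on skeleta, using \ref{assumptionB} to extend across simplexes, and thereby prove both surjectivity and injectivity of $\pr_{O}^{P}$ — is exactly the strategy of the paper. But the obstacle you yourself flag at the end is not a side issue to be handled by "carrying more inductive hypotheses"; it is the substantive content of the proof, and your sketch of the extension step does not yet contain the idea that makes it work. Choosing a point $y_{\sigma}\in U_{\sigma}$ and a definably contractible $S_{n}^{y_{\sigma}}\subseteq\pr^{-1}(U_{\sigma})$ gives you a ball inside $\pr^{-1}(U_{\sigma})$, but there is no reason the already-built $f(\partial\bar{\sigma})$ lies \emph{inside} that particular ball: having image in $\pr^{-1}(U_{\sigma})$ is much weaker than having image in $S_{n}^{y_{\sigma}}$. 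The paper's fix (\prettyref{lem:refinement-balls}, using \prettyref{prop:interior}) is to refine the cover so that for each new member $W$ there is a definably contractible definable $B$ sandwiched as $\pr^{-1}(W)\subseteq B\subseteq\pr^{-1}(V)$; then any map whose image sits in $\pr^{-1}(W)$ automatically sits in $B$ and can be coned off. Iterating this across skeleta is the "semi-good / $n$-good within $\U$" bookkeeping (\prettyref{cor:refinement-for-extension}, \prettyref{lem:realization_extension}), where the cover is pre-shrunk $\W\prec\V\prec\U$ with $\V$ a star-refinement, so that the extension over a $(k+1)$-simplex stays inside a controlled star and the resulting map is still small with respect to the outer cover. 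Without this sandwich refinement, your induction has no way to guarantee the boundary lands in the chosen contractible set, so the argument as written does not close.

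On injectivity, your plan to lift the classical homotopy $H$ while prescribing the lift on $|P|_{M}\times\{0,1\}$ to be $f_{0}$ and $f_{1}$ is harder than it needs to be: the prism triangulation $T$ used to make $H$ small will typically be strictly finer than the normal triangulations $P_{0},P_{1}$ underlying $f_{0},f_{1}$, so $f_{0}\sqcup f_{1}$ need not be $(\pr^{-1}(\W),T)$-small on the two ends, and you cannot directly invoke the extension lemma with prescribed boundary data. The paper sidesteps this: it builds a definable lift $H$ of the homotopy from scratch, then shows the restriction $H_{|0}$ admits the \emph{same} $\St(\V)$-approximation as $f_{0}$, and invokes a separate lemma (\prettyref{lem:same-approximation}, itself resting on \prettyref{lem:close-def-homotopic}) that two definable maps with a common approximation are definably homotopic. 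This decoupling of "lift the homotopy" from "match the ends" is what makes the compatibility issue disappear; your version, while morally right, would require an additional argument (essentially an analogue of \prettyref{lem:well-defined} for prisms) to make the two triangulations agree on the ends.
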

Recall that if $X=X(M)\subseteq M^{k}$ is a closed and bounded $\emptyset$-semialgebraic
and $\st:X\to X(\R)$ is the standard part map, we can identify $\pr:X\to X/E$
with $\st:X\to X(\R)$ and deduce the following result of \cite{Baro2009a}.
\begin{cor}
\label{cor:baro-otero}If $X=X(M)\subseteq M^{k}$ is a closed and
bounded $\emptyset$-semialgebraic and $\st:X\to X(\R)$ is the standard
part map 
\[
\pi_{n}(X)^{\df}\cong\pi_{n}(X(\R))
\]
and similarly $[|P|_{M},X]^{\df}\cong[|P|_{\R},X(\R)]$. 
\end{cor}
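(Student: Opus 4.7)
The plan is to derive the corollary as an immediate application of \prettyref{thm:B} to the case $E=\ker(\st)$, so the whole task reduces to verifying \ref{assumptionB} for the standard part map. Throughout I will use the identification $\pr\colon X\to X/E \cong \st\colon X\to X(\R)$ established in Section~\ref{sec:Standard-part-map}, which gives a homeomorphism between $X/E$ endowed with the logic topology and $\st(X)=X(\R)\subseteq\R^{k}$ with the Euclidean topology.

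First, I would check that $X/E$ is triangulable. Since $X$ is closed, bounded and $\emptyset$-semialgebraic, \prettyref{fact:triangulation} yields a $\emptyset$-definable homeomorphism $f\colon X\to|P|_{M}$ for some closed simplicial complex $P$; by \prettyref{prop:algebraic-subdivision} (applied trivially by replacing $P$ with a subdivision of itself) I may assume $P$ is defined over $\Ralg$. Interpreting $f$ over $\R$ gives a homeomorphism $f_{\R}\colon X(\R)\to |P|_{\R}$, so $X(\R)\cong X/E$ is homeomorphic to a polyhedron, hence triangulable in the sense of Section~\ref{sec:pullback}.

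Second, I would verify the fiberwise contractibility condition. This is exactly the content of \prettyref{prop:standard-p-trivial}: for each $y\in X(\R)$, the fiber $\st^{-1}(y)$ can be written as a countable decreasing intersection $\bigcap_{i\in\N} S_{i}$ of definably contractible open subsets of $X$. Together with the first step this shows that \ref{assumptionB} is satisfied for $\pr\colon X\to X/E$. Continuity of $\pr$ is then automatic by \prettyref{prop:pcontinuous}, as noted in the remarks preceding \prettyref{thm:B}.

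With \ref{assumptionB} in hand, I would apply \prettyref{thm:B} with $O=X/E$ (using the pointed version for $\pi_{n}$) to obtain
\[
\pi_{n}(X)^{\df}\cong\pi_{n}(X/E)\cong\pi_{n}(X(\R)),
\]
and the unpointed version with a closed simplicial complex $P$ over $\Ralg$ to obtain
\[
[|P|_{M},X]^{\df}\cong[|P|_{\R},X(\R)].
\]
There is essentially no obstacle here beyond bookkeeping: the substantive work, namely producing the good-cover approximations and proving their homotopy invariance, is already encapsulated in \prettyref{thm:A} and \prettyref{thm:B}, and \prettyref{prop:standard-p-trivial} provides the contractibility hypothesis for free in the semialgebraic setting. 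The only minor point to be careful about is to ensure that the target of the identification $X/E\cong X(\R)$ is really the one used in the statement, but this was checked in the remark following \prettyref{prop:preimage-open}.
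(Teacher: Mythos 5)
Your proposal is correct and follows essentially the same route as the paper: identify $\st$ with $\pr\colon X\to X/E$, verify Assumption~\ref{assumptionB} via the triangulability of $X(\R)$ and the fiberwise contractibility given by Proposition~\ref{prop:standard-p-trivial}, and then apply Theorem~\ref{thm:B} with $O=X/E$. One small inaccuracy: you invoke Proposition~\ref{prop:algebraic-subdivision} to arrange that $P$ is defined over $\Ralg$, but that proposition takes as \emph{input} a complex already defined over $\Ralg$, so it cannot produce one from scratch; the correct justification is simply that the triangulation theorem applied to a $\emptyset$-semialgebraic set yields a complex over $\Ralg$ directly (this is exactly what the proof of Proposition~\ref{prop:standard-p-trivial} records), so your steps 2a and 2b are both already packaged inside that proposition.
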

In the rest of the section we prove \prettyref{thm:B}. The main difficulty
is the following. The homotopy properties of a space are essentially
captured by the nerve of a good cover, but unfortunately it is not
easy to establish a correspondence\emph{ }between good covers of $X/E$
in the topological category and good covers of $X$ in the definable
category. One can try to take the preimages $\pr^{-1}(U)$ in $X$
of the open sets $U$ belonging a good cover of $X/E$, but these
preimages are only $\bigvee$-definable, and if we approximate them
by definable sets, we loose some control on the intersections. We
shall show however, that we can perform these approximations with
a controlled loss of the amount of ``goodness'' of the covers. Granted
all this, the idea is to lift homotopies from $X/E$ to $X$, with
an approach similar to the one of \cite{Smale1957,Dugundji}, namely
we start with the restriction of the relevant maps to the $0$-skeleton,
and we go up in dimension. 

In the rest of the section fix an open set $O\subseteq X/E$. We need
the following. 
\begin{lem}
\label{lem:refinement-balls}Let $\V$ be an open cover of $\Y$.
Then there is a refinement $\W$ of $\V$ such that for every $W\in\W$
there is $V\in\V$ and a definably contractible definable set $B\subseteq X$
such that $\pr^{-1}(W)\subseteq B\subseteq\pr^{-1}(V)$. 
\end{lem}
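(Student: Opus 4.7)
The plan is to build the refinement $\W$ point by point over $O$, exploiting Assumption B to produce the contractible definable ``sandwich'' $B$ at each point, and then using Proposition \ref{prop:interior} to pull this control back to a neighbourhood of $y$ in $X/E$.

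First I would fix an arbitrary $y\in O$ together with some $V\in\V$ containing $y$. By \ref{assumptionB} the fiber $\pr^{-1}(y)$ is a countable intersection $\bigcap_{i\in\N}S_{i}^{y}$ of a decreasing sequence of definably contractible open definable subsets of $X$. Since $V$ is open in $X/E$ with its logic topology, $\pr^{-1}(V)$ is $\bigvee$-definable, so we may write $\pr^{-1}(V)=\bigcup_{\alpha}D_{\alpha}$ with each $D_{\alpha}$ definable. The fiber satisfies $\bigcap_{i}S_{i}^{y}=\pr^{-1}(y)\subseteq\bigcup_{\alpha}D_{\alpha}$; by saturation (noting $\aleph_{0}<\kappa$) some finite sub-intersection of the $S_{i}^{y}$ is contained in some finite union of the $D_{\alpha}$, and since the sequence is decreasing this collapses to a single index $i=i(y)$ with $S_{i}^{y}\subseteq\pr^{-1}(V)$.

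Next, applying \prettyref{prop:interior} to the definable set $D=S_{i}^{y}$ (which contains $\pr^{-1}(y)$) gives an open neighbourhood $U_{y}\subseteq X/E$ of $y$ with $\pr^{-1}(U_{y})\subseteq S_{i}^{y}$. I would then set $W_{y}:=U_{y}\cap V$, which is still an open neighbourhood of $y$ and satisfies $\pr^{-1}(W_{y})\subseteq S_{i}^{y}\subseteq\pr^{-1}(V)$, with $W_{y}\subseteq V$. Taking $B:=S_{i}^{y}$, this is the required definably contractible witness.

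Finally, I would let $\W:=\{W_{y}\suchthat y\in O\}$. By construction $\W$ is an open cover of $O$ refining $\V$, and every element of $\W$ admits a definably contractible $B$ as required. The proof is essentially a packaging of already-established tools (saturation plus \prettyref{prop:interior}), so no step is particularly delicate; the only point requiring mild care is the saturation argument in the second paragraph, where one must remember that $\pr^{-1}(V)$ is $\bigvee$-definable rather than definable and exploit the monotonicity of the sequence $(S_{i}^{y})$ to turn the compactness conclusion into containment in a single $S_{i}^{y}$.
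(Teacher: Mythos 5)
Your proof is correct and matches the paper's argument essentially step by step: pick $y$ and $V$ containing it, use Assumption B to express $\pr^{-1}(y)$ as a decreasing intersection of definably contractible definable sets, use saturation (logical compactness) to get one such set $B$ inside the $\bigvee$-definable set $\pr^{-1}(V)$, then apply \prettyref{prop:interior} to produce the open neighbourhood $W$. The only cosmetic difference is that you intersect $W_y$ with $V$; this is unnecessary, since $\pr^{-1}(W_y)\subseteq\pr^{-1}(V)$ together with surjectivity of $\pr$ already forces $W_y\subseteq V$.
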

\begin{proof}
Let $y\in O$. By our assumption $\pr^{-1}(y)$ is a decreasing intersection
$\bigcap_{i\in\N}B_{i}(y)$ of definably contractible definable sets
$B_{i}(y)$. Now let $V(y)\in\V$ contain $y$ and note that $\pr^{-1}(V(y))$
is a $\bigvee$-definable set containing $\pr^{-1}(y)=\bigcap_{i\in\N}B_{i}(y)$.
By logical compactness $B_{n}(y)\subseteq\pr^{-1}(V(y))$ for some
$n=n(y)\in\N$. By \prettyref{prop:interior} we can find an open
neighbourood $W(y)$ of $y$ with $\pr^{-1}(W(y))\subseteq B_{n}(y)$.
We can thus define $\W$ as the collection of all the sets $W(y)$
as $y$ varies in $O$. 
\end{proof}
\begin{cor}
\label{cor:refinement-for-extension}Let $\V$ be an open cover of
$\Y$. Then there is a refinement $\W$ of $\V$ with the following
property: every definable continous map $f:|\partial\sigma|_{M}\to\pr^{-1}(W)\subseteq\pY$
whose domain is the boundary of a definable  simplex and whose image
is contained in $\pr^{-1}(W)$ for some $W\in\W$, can be extended
to a definable continous map $F:|\bar{\sigma}|_{M}\to\pr^{-1}(\Y)$
on the whole closed simplex $|\bar{\sigma}|_{M}$ with image contained
in $\pr^{-1}(V)$ for some $V\in\V$. 
\end{cor}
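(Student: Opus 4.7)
The plan is to derive the corollary directly from Lemma \ref{lem:refinement-balls} by a cone extension. Apply that lemma to $\V$ to obtain a refinement $\W$ such that for each $W\in\W$ there exist $V\in\V$ and a definably contractible definable set $B\subseteq X$ with $\pr^{-1}(W)\subseteq B\subseteq\pr^{-1}(V)$. I claim this $\W$ already satisfies the required extension property.

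Given $f:|\partial\sigma|_{M}\to\pr^{-1}(W)$ with $W\in\W$, pick the corresponding $V\in\V$ and $B$, so that the image of $f$ is contained in $B$. Fix a definable homotopy $h:B\times[0,1]\to B$ witnessing the definable contractibility of $B$, with $h(b,0)=b$ for every $b\in B$ and $h(b,1)=b_{0}$ for some fixed $b_{0}\in B$. The closed simplex $|\bar\sigma|_{M}$ is definably homeomorphic to the cone over $|\partial\sigma|_{M}$: if $c$ denotes the barycenter of $\sigma$, every point $p\in|\bar\sigma|_{M}$ has a (unique, modulo the apex) representation $p=(1-t)x+tc$ with $x\in|\partial\sigma|_{M}$ and $t\in[0,1]$, the apex $c$ corresponding to $t=1$.

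Define $F:|\bar\sigma|_{M}\to B$ by $F((1-t)x+tc):=h(f(x),t)$. At $t=1$ the right-hand side equals the constant $b_{0}$ independently of $x$, so $F$ is well defined on the apex; it is definable and continuous because $f$, $h$ and the cone parametrization all are. At $t=0$ we recover $F(x)=h(f(x),0)=f(x)$, so $F$ extends $f$. Finally $F(|\bar\sigma|_{M})\subseteq B\subseteq\pr^{-1}(V)\subseteq\pr^{-1}(O)$, which yields the required image bound.

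The argument is essentially a formal consequence of Lemma \ref{lem:refinement-balls}, which concentrates the real work (passing from a cover by $\bigvee$-definable sets to one by definable contractible sets via saturation and the topological structure of $X/E$). The only mild verification needed here is that the cone parametrization is indeed a definable continuous surjection $|\partial\sigma|_{M}\times[0,1]\to|\bar\sigma|_{M}$ collapsing $|\partial\sigma|_{M}\times\{1\}$ to $c$, and that composing with $h$ does not leave $B$; both are immediate from the linear description of $\bar\sigma$ and the fact that $h$ takes values in $B$.
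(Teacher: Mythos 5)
Your proof is correct and follows essentially the same route as the paper's: invoke Lemma \ref{lem:refinement-balls} to obtain $\W$ and a definably contractible $B$ with $\pr^{-1}(W)\subseteq B\subseteq\pr^{-1}(V)$, then extend $f$ over the closed simplex using the contractibility of $B$. The only difference is that you spell out the cone-over-the-boundary extension (which the paper leaves implicit), and that step is verified correctly.
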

\begin{proof}
Let $\V$ and $\W$ be as in \prettyref{lem:refinement-balls}. By
hypothesis, and by the property of $\W$, we have that $f(|\partial\sigma|_{M})\subseteq\pr^{-1}(W)\subseteq B\subseteq\pr^{-1}(V)$
for some definably contractible set $B$ and some $V\in\V$. Then,
$f$ can be extended to a definable map on $|\bar{\sigma}|_{M}$ with
image contained in $B\subseteq\pr^{-1}(V)$. 
\end{proof}
\begin{defn}
If $\W$ and $\V$ are as in \prettyref{cor:refinement-for-extension},
we say that $\W$ is \textbf{semi-good} within $\V$. 
\end{defn}
\begin{lem}
\label{lem:realization_extension}For any open cover $\U$ of $\Y$
and any $n\in\N$, there is a refinement $\W$ of $\U$ such that,
given a $n$-dimensional closed simplicial complex $P$, a closed
subcomplex $L$, and a $(\pr^{-1}(\W),P)$-small definable continous
map $f:|L\cup P^{(0)}|_{M}\to\pY,$ there is a $(\pr^{-1}(\U),P)$-small
definable continous map $F:|P|_{M}\to\pY$ extending $f$. 
\end{lem}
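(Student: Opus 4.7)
The plan is to build $\W$ by iterating a two-stage refinement of $\U$ exactly $n$ times, producing a chain
\[
\W = \V_0 \prec \V_0' \prec \V_1 \prec \V_1' \prec \cdots \prec \V_{n-1}' \prec \V_n = \U,
\]
in which at each stage $\V_k$ is a semi-good refinement of $\V_k'$ (via \prettyref{cor:refinement-for-extension}) and $\V_k'$ is a locally finite star-refinement of $\V_{k+1}$ (which exists because $O$ is paracompact, as in the proof of \prettyref{assu:pushforward-assumptions}). The extension $F = F_n$ is then constructed by induction: $F_0 = f$, and each $F_k \colon |L \cup P^{(k)}|_M \to \pr^{-1}(O)$ is a continuous definable map extending the previous one, satisfying the inductive invariant that for every closed simplex $\ov\sigma$ of $P$ with $\dim\sigma \leq k+1$ there is some $V \in \V_k$ with $F_k(\ov\sigma \cap |L \cup P^{(k)}|_M) \subseteq \pr^{-1}(V)$; the base case follows from the hypothesis on $f$, interpreted on closed simplices as in \prettyref{lem:Pclosed}.

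The inductive step from $F_k$ to $F_{k+1}$ handles each open $(k+1)$-simplex $\sigma \in P \setminus L$ separately. The geometric observation that open simplices of $P$ are pairwise disjoint and that $\sigma \notin L$ gives $\ov\sigma \cap |L \cup P^{(k)}|_M = |\partial\sigma|_M$, so the invariant provides a single $V_\sigma \in \V_k$ with $F_k(|\partial\sigma|_M) \subseteq \pr^{-1}(V_\sigma)$. The semi-good property of $\V_k$ within $\V_k'$ then supplies a continuous definable extension $G_\sigma \colon |\ov\sigma|_M \to \pr^{-1}(O)$ whose image lies inside some $\pr^{-1}(V_\sigma')$ with $V_\sigma' \in \V_k'$. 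I would glue $F_k$ with all these $G_\sigma$'s to define $F_{k+1}$; continuity follows from the pasting lemma, since the closed pieces are finite in number and agree on their pairwise intersections (contained in the common $|\partial\sigma|_M$'s where each $G_\sigma$ coincides with $F_k$).

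The main obstacle I anticipate lies in verifying the invariant at step $k+1$ for closed simplices of dimension exactly $k+2$, where the intersection $\ov\tau \cap |L \cup P^{(k+1)}|_M = |\partial\tau|_M$ decomposes as a union of $(k+1)$-dimensional closed faces $\ov\rho$ of $\tau$, each mapped into possibly distinct sets $\pr^{-1}(V_\rho')$ with $V_\rho' \in \V_k'$. The remedy is combinatorial: in a $(k+2)$-simplex any two $(k+1)$-faces share a common $k$-face, whose image under $F_{k+1}$ lies in both $V_{\rho_1}'$ and $V_{\rho_2}'$, so the collection of $V_\rho'$'s forms a pairwise-intersecting family in $\V_k'$. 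Because $\V_k'$ star-refines $\V_{k+1}$, the union $\bigcup_\rho V_\rho'$ sits inside $\St_{\V_k'}(V_{\rho_0}')$ for any fixed $\rho_0$, hence inside a single $V \in \V_{k+1}$, which is exactly what the invariant at step $k+1$ requires (with $\V_k' \prec \V_{k+1}$ taking care of the $\dim \leq k+1$ cases). After $n$ such steps, $F_n$ is defined on all of $|P|_M$ and is $(\pr^{-1}(\U), P)$-small, as required.
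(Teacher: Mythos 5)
Your plan is essentially the paper's: build a chain of covers by iterating ``semi-good refinement then star-refinement'' $n$ times, and extend $F$ skeleton by skeleton, verifying at each stage that the extension remains small with respect to the coarser cover. The geometry of the extension step, the use of \prettyref{cor:refinement-for-extension} on boundaries of $(k+1)$-simplexes, and the role of star-refinement in controlling the loss of smallness all match the paper's proof.

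The one substantive difference is the inductive invariant, and this is where a genuine gap appears. The paper maintains the \emph{full} smallness invariant: $F_k$ is $(\pr^{-1}(\cdot),P)$-small, i.e.\ for \emph{every} simplex $\tau\in P$ regardless of dimension, $F_k(\bar\tau\cap|L\cup P^{(k)}|_M)$ lies in a single $\pr^{-1}(W)$. You instead restrict the invariant to $\dim\sigma\le k+1$. This restriction fails to survive the induction, precisely because of $L$: if $\tau\in L$ has dimension $k+2$, then $\bar\tau\subseteq|L|$ and $\bar\tau\cap|L\cup P^{(k+1)}|_M=\bar\tau$, \emph{not} $|\partial\tau|_M$ as you claim, and the step-$k$ invariant says nothing about $F_k(\bar\tau)=f(\bar\tau)$. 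Your combinatorial ``pairwise-intersecting faces'' argument therefore addresses only $\tau\notin L$. The fix is easy --- for $\tau\in L$ one has $F_j(\bar\tau)=f(\bar\tau)\subseteq\pr^{-1}(W)$ for some $W\in\W=\V_0\prec\V_{k+1}$ by the hypothesis on $f$ and \prettyref{lem:Pclosed} --- but as written this case is unhandled. The paper's choice to track all dimensions sidesteps the issue and also allows a somewhat cleaner propagation argument (anchor every face of $\tau$ to a single $W\in\W$ coming from the hypothesis on $f$ and conclude via $\St_\V(W)$), rather than your Helly-style pairwise-intersection observation; both propagation arguments are valid, but the paper's requires the full invariant and yours requires the $\tau\in L$ patch.

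One small further remark: the $(k+1)$-faces $\rho$ of a $(k+2)$-simplex $\tau$ that happen to lie in $L$ are mapped by $F_{k+1}=F_k$ into $\pr^{-1}(V_\rho)$ with $V_\rho\in\V_k$, not $\V_k'$; since $\V_k\prec\V_k'$ one must enlarge to an element of $\V_k'$ before applying the star-refinement step. This is routine but should be said explicitly.
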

\begin{proof}
Reasoning by induction, it suffices to show that given $k<n$ and
an open cover $\U$ of $\Y$, there is a refinement $\W$ of $\U$
such that, given a $n$-dimensional closed simplicial complex $P$
and a $(\pr^{-1}(\W),P)$-small definable map $f:|L\cup P^{(k)}|_{M}\to\pY$,
there is a $(\pr^{-1}(\U),P)$-small definable map $F:|L\cup P^{(k+1)}|_{M}\to\pY$
extending $f$. 

To this aim, consider three open covers $\W\prec\V\prec\U$ of $\Y$
such that $\V$ is a star-refinement of $\U$ and and $\W\prec\V$
is semi-good within $\V$. Let $\sigma\in P^{(k+1)}$ be a $(k+1)$-dimensional
closed simplex such that $\bar{\sigma}$ is not included in the domain
of $f$. Since $|\partial\sigma|_{M}\subseteq|\sigma\cap P^{(k)}|_{M}\subseteq\dom(f)$
and $f$ is $(\pr^{-1}(\W),P)$-small, there is $W\in\W$ such that
$f(|\partial\sigma|_{M})\subseteq f(|\bar{\sigma}\cap P^{(k)}|_{M})\subseteq\pr^{-1}(W)$.
By the choice of $\W$, there is $V_{\sigma}\in\V$ such that we can
extend $f_{|\partial\sigma}$ to a map $F_{\sigma}:|\bar{\sigma}|_{M}\to\pr^{-1}(V_{\sigma})$
and define $F:|L\cup P^{(k+1)}|_{\R}\to\pY$ as the union of $f$
and the various $F_{\sigma}$ for $\sigma\in P^{(k+1)}$. 

It remains to prove that $F:|L\cup P^{(k+1)}|_{\R}\to\pY$ is $(\pr^{-1}(\U),P)$-small.
To this aim let $\tau\in P$ be any simplex. By our hypothesis there
is $W\in\W$ such that $f(|\bar{\tau}\cap P^{(k)}|_{M})\subseteq\pr^{-1}(W)$.
Now let $V\in\V$ contain $W$. By construction each face $\sigma$
of $\tau$ belonging to $L\cup P^{(k+1)}$ is mapped by $F$ into
$\pr^{-1}(V_{\sigma})$ for some $V_{\sigma}\in\V$. Moreover $V_{\sigma}$
intersects $W$, so it is included in $\St_{\V}(W)$. The latter depends
only on $\tau$ and not on $\sigma$ and is is contained in some $U\in\U$.
We have thus shown that $\bigcup_{\sigma}V_{\sigma}$ is contained
in some $U\in\U$, thus showing that $F$ is $(\pr^{-1}(\U),P)$-small.
\end{proof}
\begin{defn}
Let $\U$ be an open cover of $\Y$. If $\W$ is as in \prettyref{lem:realization_extension}
we say that $\W$ is \textbf{$n$-good} within $\U$. If the only
member of $\U$ is $O$ (or if the choice of $\U$ is irrelevant),
we simply say that $\W$ is $n$-good. 
\end{defn}
\begin{lem}
\label{lem:close-def-homotopic}Let $n\in\N$ and let $\W$ be an
$n+1$-good cover of $O$. If $P$ is an $n$-dimensional simplicial
complex and $f,g:|P|_{M}\to\pY$ are definable continous functions
such that for every $\sigma\in P$ there is $W\in\W$ such that $f(\sigma)$
and $g(\sigma)$ are contained in $\up W$, then $f$ and $g$ are
definably homotopic. 
\end{lem}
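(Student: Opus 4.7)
The plan is to build a definable homotopy $H\colon |P\times I|_M \to \pY$ from $f$ to $g$ by first defining it on the subcomplex $L := P\times\{0,1\}$ as the disjoint union $f\sqcup g$, and then extending it to the full prism using the $(n+1)$-goodness of $\W$. The key observation that makes this feasible is that $P\times I$ has dimension $n+1$, which matches exactly the hypothesis on $\W$.

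Concretely, I would equip $P\times I$ with the standard prism triangulation (as in \cite[p.~112]{Hatcher2002}). Its vertices are the points $(v,0)$ and $(v,1)$ for $v$ a vertex of $P$, all of which lie in $L$, so $L$ is a closed subcomplex of $P\times I$ containing the entire $0$-skeleton; consequently $|L\cup (P\times I)^{(0)}|_M = |L|_M$, which is the shape needed by \prettyref{lem:realization_extension}. Define $H_0\colon |L|_M \to \pY$ by $H_0(x,0) := f(x)$ and $H_0(x,1) := g(x)$: since $L$ is the disjoint union of the two closed pieces, $H_0$ is definable and continuous.

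The main technical step is to verify that $H_0$ is $(\pr^{-1}(\W),P\times I)$-small. Any simplex $\tau$ of the prism triangulation is contained in $\bar\sigma\times[0,1]$ for some $\sigma\in P$, whence $\tau\cap L \subseteq (\bar\sigma\times\{0\})\cup(\bar\sigma\times\{1\})$, so $H_0(\tau\cap L)\subseteq f(\bar\sigma)\cup g(\bar\sigma)$. By hypothesis there is $W\in\W$ with $f(\sigma),g(\sigma)\subseteq \pr^{-1}(W)$; continuity of $\pr$, $f$ and $g$ together with \prettyref{prop:proj-of-closure}, exactly as in the proof of \prettyref{lem:Pclosed}, upgrade this to $f(\bar\sigma),g(\bar\sigma)\subseteq \pr^{-1}(W)$, giving $H_0(\tau\cap L)\subseteq \pr^{-1}(W)$ as required.

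At this point I would invoke the $(n+1)$-goodness of $\W$ (that is, \prettyref{lem:realization_extension} with outer cover $\{O\}$, for which the smallness of the extension is automatic) to produce a definable continuous extension $H\colon |P\times I|_M \to \pY$. By construction $H(\cdot,0)=f$ and $H(\cdot,1)=g$, so $H$ is the desired definable homotopy. The only place requiring care is the closed-simplex upgrade in the previous paragraph, which is exactly what \prettyref{lem:Pclosed} is designed to provide; beyond that, the argument is a direct application of the extension lemma.
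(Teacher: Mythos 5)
Your proof is correct and takes essentially the same route as the paper: triangulate the prism $P\times I$, define $f\sqcup g$ on the subcomplex $P\times\{0,1\}$ (which contains the $0$-skeleton), and apply \prettyref{lem:realization_extension} via the $(n+1)$-goodness of $\W$. You additionally spell out the verification that $f\sqcup g$ is $(\pr^{-1}(\W),P\times I)$-small (including the closed-simplex upgrade via the argument of \prettyref{lem:Pclosed}), which the paper simply asserts.
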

\begin{proof}
Let $I=[0,1]$ and consider the simplicial complex $P\times I$ (of
dimension $n+1$) with the standard triangulation (as in \cite[p.\ 112, Proof of 2.10]{Hatcher2002}).
Consider the subcomplex $P\times\{0,1\}$ of $P\times I$ and note
that it contains the $0$-skeleton of $P\times I$. Define $f\sqcup g:|P\times\{0,1\}|_{M}\to O$
as the function which sends $(x,0)$ to $f(x)$ and $(x,1)$ to $g(x)$.
Note that $f\sqcup g$ is $(\pr^{-1}(\W),P\times I)$-small by hypothesis.
By \prettyref{lem:realization_extension} we can extend it to definable
continuos function $H:|P\times I|_{M}\to\pY$. This map is a homotopy
between $f$ and $g$.
\end{proof}
\begin{lem}
\label{lem:same-approximation}Let $n\in\N$. Let $\V$ be an open
covering of $O$ which is a star refinement of a $n+1$-good cover
$\W$. Given an $n$-dimensional simplicial complex $P$ and definable
continuos maps $f,g:|P|_{M}\to\pY$, if there is a map $f^{*}:|P|_{\R}\to O$
which is a $\V$-approximations of both $f$ and $g$, then $f$ and
$g$ are definably homotopic. 
\end{lem}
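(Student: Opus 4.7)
The strategy is to reduce the claim to \prettyref{lem:close-def-homotopic} by producing a subdivision $P'$ of $P$, still $n$-dimensional and defined over $\Ralg$, with the property that for each simplex $\sigma'\in P'$ there is a common $W_{\sigma'}\in\W$ with $f(\sigma'_M)\cup g(\sigma'_M)\subseteq\pr^{-1}(W_{\sigma'})$. The common approximation $f^*$ serves as a bridge between $f$ and $g$, while the star-refinement property of $\V$ over $\W$ is used to absorb the two approximation errors (from $f$ to $f^*$ and from $f^*$ to $g$) into a single element of $\W$.

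To produce such a $P'$, I would first exploit the continuity of $f^*:|P|_\R\to O$ via the Lebesgue number lemma applied to the open cover $\{(f^*)^{-1}(V)\suchthat V\in\V\}$ of $|P|_\R$ (passing to the closure $\bar P$ if $P$ is not already closed). Iterating barycentric subdivisions of $P$ until every simplex has diameter less than the Lebesgue number, and invoking \prettyref{prop:algebraic-subdivision} to realize these subdivisions over $\Ralg$, one obtains a subdivision $P'$ of $P$ over $\Ralg$ such that $f^*(\sigma'_\R)\subseteq V_{\sigma'}$ for some $V_{\sigma'}\in\V$ and every $\sigma'\in P'$.

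Next, fix $\sigma'\in P'$ and $z\in\sigma'_M$. Since $\sigma'$ has $\Ralg$-vertices, $\st(z)\in\sigma'_\R$ and therefore $f^*(\st(z))\in V_{\sigma'}$. The $\V$-approximation property of $f$ by $f^*$ supplies some $V\in\V$ containing both $\pr(f(z))$ and $f^*(\st(z))$; since $V\cap V_{\sigma'}\neq\emptyset$ and $\V$ star-refines $\W$, there is $W_{\sigma'}\in\W$ with $V\subseteq\St_\V(V_{\sigma'})\subseteq W_{\sigma'}$, so $\pr(f(z))\in W_{\sigma'}$. Running the same argument with $g$ in place of $f$ gives $\pr(g(z))\in W_{\sigma'}$. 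Hence $f(\sigma'_M)\cup g(\sigma'_M)\subseteq\pr^{-1}(W_{\sigma'})$ for every $\sigma'\in P'$.

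Finally, since $\dim P'=n$ and $\W$ is $(n+1)$-good by hypothesis, \prettyref{lem:close-def-homotopic} applied to $P'$ produces the desired definable homotopy between $f$ and $g$. The principal technical point is the first step: the geometric subdivision must be chosen over $\Ralg$ so that the standard part map identifies $\sigma'_M$ with $\sigma'_\R$, thereby transferring smallness of $f^*$ on simplexes of $|P|_\R$ into the joint control of $\pr\circ f$ and $\pr\circ g$ on simplexes of $|P|_M$ required by \prettyref{lem:close-def-homotopic}; this is exactly the content of \prettyref{prop:algebraic-subdivision}.
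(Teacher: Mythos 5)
Your proposal matches the paper's proof essentially line for line: subdivide $P$ by iterated barycentric subdivision until $f^*$ is $(\V,\ov{P'})$-small via the Lebesgue number lemma, use the common approximation $f^*$ as a bridge so that star-refinement of $\V$ over $\W$ forces $\pr\circ f$ and $\pr\circ g$ to land in a single $W\in\W$ on each simplex, and then apply \prettyref{lem:close-def-homotopic}. Two small remarks: invoking \prettyref{prop:algebraic-subdivision} is unnecessary here since barycenters of real-algebraic vertices are automatically real algebraic, and you should work with closed simplexes $f^*(\bar{\sigma}'_\R)\subseteq V_{\sigma'}$ so that $\st(z)\in\bar{\sigma}'_\R$ for $z\in\sigma'_M$ (as the paper does), but neither affects the correctness of the argument.
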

\begin{proof}
Let $P'$ be an iterated barycentric subdivision of $P$ such that
for each $\sigma\in P'$ there is $V\in\V$ such that $f^{*}(\bar{\sigma})\subseteq V$.
We claim that for each $\sigma\in P'$, there is a $W\in\W$ such
that $\pr\circ f(\sigma)$, $\pr\circ g(\sigma)$ (and $f^{*}\circ\st(\sigma)$)
are in $W$. Given this claim, we can conclude using \prettyref{lem:close-def-homotopic}.

To prove the claim, fix a $\sigma\in P'$ and let $V\in\V$ be such
that $f^{*}(\sigma)\subseteq V$. Since $f^{*}$ is $\V$-approximation
of $f$, for each $x\in\sigma$ there is $V_{x}\in\St(\V)$ such that
$\pr\circ f(x)$ and $f^{*}\circ\st(x)$ are in $V_{x}$, and similarly
there is a $V_{x}'$ such that $\pr\circ g(x)$ and $f^{*}\circ\st(x)$
are in $V_{x}'$. Since $V$ intersects both $V_{x}$ and $V_{x}'$,
$\St_{\V}(V)$ contains both $\pr\circ f(x)$ and $\pr\circ g(x)$,
and since $\St(\V)$ refines $\W$, there is $W\in\W$ with the same
property.
\end{proof}
\begin{lem}
\label{lem:pr-injective}Let $n\in\N$. There is an open cover $\W$
of $\Y$ such that, given an $n$-dimensional simplicial complex $P$
and definable continuous maps $f,g:|P|_{M}\to\pY$, if $f^{*}$ and
$g^{*}$ are $\W$-approximations of $f$ and $g$ respectively, and
$G:|P\times I|_{\R}\to\Y$ is a homotopy between $f^{*}$ and $g^{*}$,
then there is a definable homotopy $H:|P\times I|_{M}\to\pY$ between
$f$ and $g$. 
\end{lem}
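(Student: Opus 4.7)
The plan is to lift the given classical homotopy $G$ to a definable one $H$ by subdividing $P\times I$ finely enough that $G$ becomes small with respect to $\W$, defining $H$ by hand on the two boundary levels and on the interior vertices of the subdivision, and then filling in the higher-dimensional simplexes using \prettyref{lem:realization_extension}. To fix the cover, I would apply \prettyref{lem:realization_extension} with $\U=\{O\}$ and dimension $n+1$ (since $P\times I$ has dimension $n+1$) to produce an $(n+1)$-good cover $\W_{1}$ of $O$, and then let $\W$ be a star refinement of $\W_{1}$. Given the data $f,g,f^{*},g^{*},G$, apply the Lebesgue number lemma to $G$ together with \prettyref{prop:algebraic-subdivision} to obtain a subdivision $T$ of the standard triangulation of $P\times I$, defined over $\Ralg$, such that $G$ is $(\W,T)$-small.

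Next I would define $H_{0}\colon |L\cup T^{(0)}|_{M}\to \pY$, where $L\subseteq T$ is the subcomplex on $P\times\{0,1\}$, by the prescriptions $H_{0}(x,0)=f(x)$ and $H_{0}(x,1)=g(x)$ for $x\in|P|_{M}$, and for each vertex $v\in T^{(0)}$ lying strictly inside $P\times(0,1)$ by choosing any $H_{0}(v)\in \pr^{-1}(\{G(v)\})$ (non-empty as $\pr$ is surjective). Continuity is automatic since the added interior vertices are isolated in $|L\cup T^{(0)}|_{M}$. I would then verify that $H_{0}$ is $(\pr^{-1}(\W_{1}),T)$-small. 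Given $\bar\tau\in T$, pick $W\in\W$ with $G(\bar\tau)\subseteq W$. For an interior vertex $v$ of $\bar\tau$ one has $\pr(H_{0}(v))=G(v)\in W$. For $x\in\bar\tau\cap |P\times\{0\}|_{M}$, the $\W$-approximation property supplies some $W'\in\W$ containing both $\pr(f(x))$ and $f^{*}(\st(x))$; since $\tau$ is an $\Ralg$-simplex, $\st(x)\in\bar\tau_{\R}$, so $f^{*}(\st(x))=G(\st(x),0)\in W$, whence $W\cap W'\neq\emptyset$. The analogous bound holds at level $1$. Consequently $\pr(H_{0}(\bar\tau\cap(L\cup T^{(0)})))\subseteq \St_{\W}(W)\subseteq W_{1}$ for some $W_{1}\in\W_{1}$, by the star-refinement choice.

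Finally, applying \prettyref{lem:realization_extension} (with $\U=\{O\}$ and dimension $n+1$) extends $H_{0}$ to a definable continuous $H\colon|P\times I|_{M}\to\pY$. Since the extension preserves $H_{0}$ on the subcomplex, $H$ restricts to $f$ on $|P|_{M}\times\{0\}$ and to $g$ on $|P|_{M}\times\{1\}$, so it is the desired definable homotopy. The main obstacle is precisely the smallness verification on simplexes $\bar\tau$ touching the boundary levels, where one must glue the $\W$-closeness of $\pr\circ f$ to $f^{*}\circ\st$ coming from the approximation hypothesis with the $\W$-smallness of $G$; the star-refinement buffer between $\W$ and the $(n+1)$-good cover $\W_{1}$ is precisely what is needed to absorb this combination and route the extension argument through \prettyref{lem:realization_extension}.
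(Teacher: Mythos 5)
Your proof is correct but takes a genuinely different route from the paper's. The paper defines $H^{(0)}$ as a lift of $G\circ\st$ on \emph{all} vertices of $T$ (including those at levels $0$ and $1$), extends via \prettyref{lem:realization_extension} to get a definable homotopy $H$ whose endpoints $H_{|0}$, $H_{|1}$ are in general \emph{not} equal to $f$, $g$, and then deploys \prettyref{lem:same-approximation} to show $H_{|0}$ is definably homotopic to $f$ and $H_{|1}$ to $g$, concatenating three homotopies at the end. This forces the paper to thread a longer cover chain $\W\prec\V\prec\U$, with $\St(\V)$ star-refining the $(n{+}1)$-good $\U$ and $\W$ being $(n{+}1)$-good within $\V$. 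You instead hard-wire the boundary conditions: $H_0$ equals $f$ on $|P\times\{0\}|_M$ and $g$ on $|P\times\{1\}|_M$, lifting $G$ only at the interior vertices, so that a single application of \prettyref{lem:realization_extension} produces a homotopy with the correct endpoints and no auxiliary homotopies are needed. The cost, which you pay correctly, is the smallness verification of $H_0$ near the boundary levels: you use the $\W$-approximation hypothesis (that $\pr\circ f$ and $f^*\circ\st$ are $\W$-close), the fact that $G\circ\st=f^*\circ\st$ on level $0$ (so that the two relevant $\W$-members intersect), and one star-refinement step $\W\prec^* \W_1$ to absorb the resulting $\St_\W(W)$ into a member of the $(n{+}1)$-good cover $\W_1$. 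This also makes your required cover chain shorter than the paper's. Both arguments are sound; yours is somewhat more direct. One minor redundancy: invoking \prettyref{prop:algebraic-subdivision} is unnecessary here, since iterated barycentric subdivisions of the standard $\Ralg$-triangulation of $P\times I$ are automatically defined over $\Ralg$ (barycenters of $\Ralg$-simplexes have real-algebraic coordinates), which is exactly what the paper does.
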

\begin{proof}
Let $\U$ be an $n+1$-good covering of $O$, let $\V$ be such that
$\St(\V)$ is a star refinement of $\U$ and let $\W\prec\V$ be $n+1$-good
within $\V$. Let $T$ be a barycentric subdivision of $|P\times I|_{\R}$
such that $G$ is $(\W,T)$-small. Let $H^{(0)}:T^{(0)}\to\pY$ be
such that $\pr\circ H^{(0)}=G\circ\st$ on the vertices of $T$. For
each simplex $\sigma\in T$ there is $W\in\W$ such that $G(|\bar{\sigma}|_{\R})\subseteq W$,
hence $H^{(0)}(|\sigma^{(0)}|_{M})\subseteq W$. Using \prettyref{lem:realization_extension}
we can extend $H^{0}$ to a $(\pr^{-1}(\V),T)$-small definable continous
map $H:|T|_{M}\to\pY$. If $\mathbf{x}=(x,0)\in|P\times0|_{M}$ is
a vertex of $T$, then $(f^{*}\circ\st)(x)=(G\circ\st)(\mathbf{x})=(\pr\circ H)(\mathbf{x})$
by construction. Since moreover $f^{*}\circ\st$ and $\pr\circ H$
are $(\V,T)$-small, it follows that $f^{*}\circ\st$ and $\pr\circ H_{|0}$
are $\St(\V)$-close, hence $f^{*}$ is a $\St(\V)$-approximation
of both $f$ (by hypothesis) and of $H_{|0}$. We can then conclude
using \prettyref{lem:same-approximation} that $f$ and $H_{|0}$
are homotopic, and, similarly, that $H_{|1}$ is homotopic to $g$.
Composing the homotopies, we can finally prove that $f$ is homotopic
to $g$. 
\end{proof}
\begin{lem}
\label{lem:pr-surjective}Let $\U$ be an open cover of $\Y$. Let
$f^{*}:|P|_{\R}\to\Y$ be a continuous map. Then, we can find a map
$f:|P|_{M}\to\pY$ such that $f^{*}$ is a $\U$-approximation of
$f$.
\end{lem}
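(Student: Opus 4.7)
The plan is to build $f$ on the $0$-skeleton of a sufficiently fine subdivision of $P$ by picking, for each real algebraic vertex $v$, an arbitrary representative in the fibre $\pr^{-1}(f^{*}(v))$, and then to extend simplex by simplex via \prettyref{lem:realization_extension}. Let $n=\dim(P)$. First I would choose an open cover $\U'$ of $O$ that star-refines $\U$, and then apply \prettyref{lem:realization_extension} to $\U'$ to obtain a refinement $\W\prec\U'$ with the associated extension property.

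Since $|P|_{\R}$ is compact and $f^{*}$ is continuous, the Lebesgue number lemma applied to $\{(f^{*})^{-1}(W)\suchthat W\in\W\}$ yields an $\eps>0$ such that every subset of $|P|_{\R}$ of diameter less than $\eps$ is mapped by $f^{*}$ into some member of $\W$. I would then take an iterated barycentric subdivision $P'$ of $P$ whose closed simplices all have diameter less than $\eps$, choosing $P'$ over $\Ralg$ by \prettyref{prop:algebraic-subdivision}, so that for each $\sigma\in P'$ there is $W_{\sigma}\in\W$ with $f^{*}(\ov{\sigma}_{\R})\subseteq W_{\sigma}$. Next, define $f^{(0)}:|P'^{(0)}|_{M}\to\pY$ by choosing, for each (real algebraic) vertex $v$ of $P'$, an arbitrary point $f^{(0)}(v)\in\pr^{-1}(f^{*}(v))$ (the fibre is non-empty). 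Since every vertex of $\sigma\in P'$ is thereby sent into $\pr^{-1}(W_{\sigma})$, the map $f^{(0)}$ is $(\pr^{-1}(\W),\ov{P'})$-small; \prettyref{lem:realization_extension} then produces a $(\pr^{-1}(\U'),P')$-small definable continuous extension $f:|P|_{M}\to\pY$.

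The remaining step, and the only non-routine part, is to check that $f^{*}$ is actually a $\U$-approximation of $f$. Given $x\in\sigma_{M}\subseteq|P|_{M}$ with $\sigma\in P'$, smallness yields $U'_{\sigma}\in\U'$ with $\pr(f(\ov{\sigma}_{M}))\subseteq U'_{\sigma}$, and since $\W$ refines $\U'$ there is $U''\in\U'$ with $W_{\sigma}\subseteq U''$. Both $U'_{\sigma}$ and $U''$ contain $\pr(f(v))=f^{*}(v)\in W_{\sigma}$ for any vertex $v$ of $\sigma$, so $U'_{\sigma}\cap U''\neq\emptyset$; since $\U'$ star-refines $\U$, both sets lie in a common $U\in\U$. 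Hence $\pr(f(x))\in U'_{\sigma}\subseteq U$ and $f^{*}(\st(x))\in W_{\sigma}\subseteq U''\subseteq U$, as required. The main subtlety is this bookkeeping of covers: the intermediate star-refinement $\U'$ is what allows us to reconcile the element of $\U'$ controlling $\pr\circ f$ on $\ov{\sigma}_{M}$ with the element of $\W$ controlling $f^{*}$ on $\ov{\sigma}_{\R}$, by absorbing both into a single member of $\U$.
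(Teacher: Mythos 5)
Your proof is correct and follows essentially the same route as the paper: introduce an intermediate star-refinement $\U'$ of $\U$ (the paper calls it $\V$), take $\W$ to be $n$-good within $\U'$ via \prettyref{lem:realization_extension}, subdivide $P$ so that $f^{*}$ is $(\W,P')$-small, lift on the $0$-skeleton, and extend. Your final paragraph spelling out the cover bookkeeping is just an expanded version of the paper's one-line remark that $\pr\circ f$ and $f^{*}\circ\st$ are $\St(\V)$-close because they are both $(\V,P')$-small and agree on vertices. (A minor remark: invoking \prettyref{prop:algebraic-subdivision} is unnecessary here, since iterated barycentric subdivisions of a complex over $\Ralg$ are automatically over $\Ralg$.)
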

\begin{proof}
Let $n=\dim(P)$, let $\V$ be a star-refinement of $\U$ and let
$\W$ be $n$-good within $\V$. Consider an iterated baricentric
subdivision $P'$ of $P$ such that $f^{*}$ is $(\W,P')$-small.
Let $f^{(0)}:P'^{(0)}\to\pY$ be such that $\pr\circ f^{(0)}=f^{*}\circ\st$
on the vertices of $P'$. Then we can apply \prettyref{lem:realization_extension}
to extend $f^{(0)}$ to a $(\V,P')$-small map $f:|P|_{M}\to\pY$.
Now notice that $\pr\circ f$ and $f^{*}\circ\st$ are $\St(\V)$-close
(since they are $(\V,P')$-small and they coincide on the vertices),
and therefore $f^{*}$ is a $\St(\V)$-approximation of $f$, so also
a $\U$-approximation.
\end{proof}
We can now finish the proof of the main result of this section. 
\begin{proof}[Proof of \prettyref{thm:B}]
First we prove the injectivity. Suppose that $\pr_{\Y}^{P}([f])=\pr_{\Y}^{P}([g])$.
Let $\W$ be as in \prettyref{lem:pr-injective}. Choosing a different
representative of the homotopy classes we can assume without loss
of generality that $f$ and $g$ are $(\pr^{-1}(\W),P)$-small, $\pr_{\Y}^{P}([f])=[f^{*}]$
and $\pr_{\Y}^{P}([g])=[g^{*}]$, where $f^{*}$ and $g^{*}$ are
$\W$-approximation of $f$ and $g$ respectively. By definition $[f^{*}]=[g^{*}]$,
that is $f^{*}$ and $g^{*}$ are homotopic. We can now apply \prettyref{lem:pr-injective}
to find a definable homotopy between $f$ and $g$, and so $[f]=[g]$.

The surjectivity is immediate from \prettyref{lem:pr-surjective}.
\end{proof}

\section{Theorem C\label{sec:C}}

In this section we work under the following strengthening of \ref{assumptionB},
where we considers definable proper balls (\prettyref{def:ball})
instead of definably contractible sets. 
\begin{assumption}
\label{assumptionB-1}$X/E$ is a triangulable manifold, $X$ is a
definable manifold, and each fiber of $\pr:X\to X/E$ is the intersection
of a decreasing sequence of definable proper balls. 
\end{assumption}
We also need:
\begin{assumption}[Topological compact domination]
\label{assumptionC}The image under $\pr:X\to X/E$ of a definable
subset of $X$ with empty interior, has empty interior. 
\end{assumption}
Both assumptions are satisfied by $\pr:G\to G/G^{00}$ for any definably
compact group $G$ (see section 7 and \prettyref{cor:G00-balls}). 
\begin{thm}[Theorem C]
\label{thm:C}Under Assumption \ref{assumptionC}, we have $\dim(X)=\dim_{\R}(X/E).$ 
\end{thm}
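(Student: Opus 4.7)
Set $n := \dim(X)$ and $m := \dim_\R(X/E)$; the aim is $n = m$, which I prove as two inequalities, using \ref{assumptionC} for one direction and \prettyref{thm:B} for the other.

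For $m \leq n$: by \prettyref{fact:triangulation} fix a triangulation $\phi:|P|_M \to X$ with $\dim P = n$. Partition $|P|_M = A \sqcup B$ into the union $A$ of top-dimensional open simplexes and the union $B$ of open simplexes of strictly smaller dimension. Then $\phi(B)$ is definable of o-minimal dimension $<n$, hence has empty interior in $X$, so by \ref{assumptionC} so does $\pr(\phi(B))$ in $X/E$. Since $X/E$ is compact Hausdorff (hence Baire) and $X/E = \pr(\phi(A)) \cup \pr(\phi(B))$, the set $\pr(\phi(A))$ is not nowhere dense; writing its closure as the finite union $\bigcup_\sigma \pr(\phi(\bar\sigma))$ of closed sets (\prettyref{prop:image-definable-closed}), Baire yields a top-dimensional closed simplex $\bar\sigma$ with $\pr(\phi(\bar\sigma))$ of non-empty interior in the $m$-manifold $X/E$, in particular containing a closed $m$-ball $\bar U$. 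The fibers of $\pr$ restricted to $\phi(\bar\sigma) \cap \pr^{-1}(\bar U)$ inherit from \ref{assumptionB-1} the structure of intersections of decreasing sequences of definable proper balls, hence are Čech-acyclic; the Vietoris--Begle mapping theorem then supplies a Čech-cohomology isomorphism between this closed subset of the $n$-simplex and $\bar U$. Since $\bar U$ carries non-vanishing top relative Čech cohomology in degree $m$ while an $n$-polyhedron has none above degree $n$, the inequality $m \leq n$ follows.

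For $n \leq m$: use \prettyref{thm:B}. Pick any $x \in X$; by \ref{assumptionB-1} and saturation, choose a definable proper $n$-ball $B \subseteq X$ containing the fiber $\pr^{-1}(\pr(x))$, and by \prettyref{prop:interior} an open $m$-ball $O \ni \pr(x)$ in $X/E$ with $\pr^{-1}(O) \subseteq B$. Theorem B gives $\pi_k(\pr^{-1}(O))^{\df} \cong \pi_k(O)$ and the analogous isomorphism for $O \setminus \{\pr(x)\} \simeq S^{m-1}$. Upgrading the Smale-type $\pi_*$-isomorphism of Theorem B to a Vietoris--Begle Čech-cohomology isomorphism, valid because the fibers of $\pr$ are Čech-acyclic by \ref{assumptionB-1}, transfers the top non-vanishing cohomology degree of the local model $(\bar O,\partial O)$ (which is $m$, since $\bar O$ is an $m$-ball) to a corresponding non-vanishing in degree $m$ for the pair $(\bar B,\partial B \cup \pr^{-1}(\pr(x)))$; since $B$ is a definable proper $n$-ball, its top non-vanishing relative Čech cohomology sits in degree $n$, forcing $n \leq m$.

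The main obstacle is the sharpness of the cohomological comparison in the second direction: \prettyref{thm:B} only gives $\pi_*$-isomorphisms, whereas manifold dimension is detected by top local cohomology, and homotopy equivalence alone does not preserve dimension (witness contractible $\R^n$ for all $n$). Bridging this gap via Whitehead's theorem for the relevant nice subcomplex approximations, or more directly via a Vietoris--Begle Čech-cohomology argument applied to the projection $\pr$, is the essential technical step, relying crucially on the acyclic-fiber structure provided by \ref{assumptionB-1}. Once this cohomological bridge is in place, the dimension equality follows from the standard characterization of manifold dimension as the degree of top non-vanishing local relative cohomology.
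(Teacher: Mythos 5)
Your two inequalities are aimed at the right statement, but you have swapped which direction actually needs compact domination, and both halves of your argument lean on a Vietoris--Begle \v{C}ech-cohomology comparison between the definable and the classical category that the paper never constructs and that would require substantial extra machinery. In the paper, the inequality $\dim_{\R}(X/E)\leq\dim(X)$ (your ``$m\leq n$'') is proved \emph{without} topological compact domination: one squeezes
\[
x\in\pr^{-1}(y)\subseteq B_{1}\subseteq\ov{B_{1}}\subseteq\pr^{-1}(A)\subseteq B_{0}
\]
with $B_{0},B_{1}$ definable (proper) $n$-balls and $A$ an open ball in $X/E$; the boundary sphere $\partial B_{1}\cong S^{n-1}$ is not definably null-homotopic in the punctured ball $B_{0}\setminus\{x\}$, hence not in $\pr^{-1}(A\setminus y)$, and Theorem~B transports this to $\pi_{n-1}(A\setminus y)\neq0$, forcing $n\geq N$ since $A\setminus y\simeq S^{N-1}$. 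This is a pure homotopy-group argument and needs no cohomology at all. Your triangulation-and-Baire approach for this direction has a concrete gap besides the cohomology issue: the fibers of $\pr$ restricted to $\phi(\ov\sigma)\cap\pr^{-1}(\ov U)$ are $\pr^{-1}(u)\cap\phi(\ov\sigma)$, which are only pieces of the full fibers; Assumption~\ref{assumptionB-1} gives no structure on these intersections, so the claimed acyclicity (and with it the Vietoris--Begle hypothesis) is unjustified.

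For the other inequality $\dim(X)\leq\dim_{\R}(X/E)$ (your ``$n\leq m$''), you correctly identify that $\pi_{*}$-isomorphisms alone cannot detect dimension and then propose a cohomological upgrade that you do not actually carry out. The paper's route is different and is exactly where topological compact domination enters: take $f:S^{N-1}\to\pr^{-1}(A_{0}\setminus\ov{A_{1}})$ with $[f]\neq0$ (Theorem~B), note $[f]=0$ in $\pi_{N-1}^{\df}(\pr^{-1}(A_{0}))$ since $A_{0}$ is a ball, so $f$ extends to $F:S^{N-1}\times I\to\pr^{-1}(A_{0})$. If $N<\dim(X)$ then $F(D)$ has empty interior, so by \ref{assumptionC} its image in $X/E$ has empty interior, hence $F$ misses some fiber $\pr^{-1}(y)$ with $y\in A_{1}$. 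Then $F$ is a null-homotopy into $\pr^{-1}(A_{0}\setminus y)$, and the deformation retraction $A_{0}\setminus y\simeq A_{0}\setminus\ov{A_{1}}$ together with the functoriality in Theorem~B yields the contradiction $[f]=0$ in $\pi_{N-1}^{\df}(\pr^{-1}(A_{0}\setminus\ov{A_{1}}))$. The key idea you are missing is precisely this ``peel off a fiber'' use of compact domination, which bypasses the dimension-detection problem entirely; without it, your proposal does not close.
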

To prove the theorem the idea is to exploit the following link between
homotopy and dimension: given a manifold $Y$ and a punctured open
ball $U:=A\setminus\{y\}$ in $Y$, the dimension of $Y$ is the least
integer $i$ such that $\pi_{i-1}(U)\neq0$. 
\begin{prop}
\label{prop:dimX-geq-dimXE}$\dim(X)\geq\dim_{\R}(X/E)$. 
\end{prop}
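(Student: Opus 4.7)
My plan is to argue by contradiction: assume $d:=\dim(X)<n:=\dim_{\R}(X/E)$ and derive a contradiction, exploiting the hint via Theorem B applied locally together with topological compact domination (\ref{assumptionC}).

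The manifold structure of $X/E$ lets me choose an open $n$-ball $A\subseteq X/E$ and a point $y\in A$. Since $A\setminus\{y\}\simeq S^{n-1}$ and $A$ is contractible, $\pi_{n-1}(A\setminus\{y\})\cong\Z$ and $\pi_{n-1}(A)=0$. Applying \prettyref{thm:B} to both open sets and using the naturality under the inclusion $A\setminus\{y\}\hookrightarrow A$ inherited from \prettyref{thm:A}(2), I obtain $\pi_{n-1}(\pr^{-1}(A\setminus\{y\}))^{\df}\cong\Z$, $\pi_{n-1}(\pr^{-1}(A))^{\df}=0$, and — most importantly — any definable generator $\varphi\colon S^{n-1}\to\pr^{-1}(A\setminus\{y\})$ admits a definable nullhomotopy $\bar{\varphi}\colon D^{n}\to\pr^{-1}(A)$ inside $\pr^{-1}(A)$.

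Let $Z:=\bar{\varphi}(D^{n})\subseteq X$. This is a compact definable set, and the composition $\pr\circ\bar{\varphi}\colon D^{n}\to A$ has boundary $\pr\circ\varphi$ of nonzero winding number around $y$ in the Euclidean $n$-ball $A$. A standard degree-theoretic argument then forces $\pr(Z)=(\pr\circ\bar{\varphi})(D^{n})$ to contain an open neighbourhood of $y$ in $X/E$; in particular $\pr(Z)$ has topological dimension $n$. On the other hand, $\dim(Z)\leq\min(\dim(D^{n}),\dim(X))=d<n$, and since the o-minimal dimension of a definable set equals its topological (covering) dimension, $\dim_{\mathrm{top}}(Z)\leq d<n$. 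The desired contradiction would follow if we knew that $\dim_{\mathrm{top}}(\pr(Z))\leq\dim(Z)$.

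The hard part is precisely this last inequality: Assumption \ref{assumptionC} only directly yields that $\pr$ carries \emph{empty-interior} definable sets to empty-interior sets, i.e.\ $\dim(Z')<d\Rightarrow\dim_{\mathrm{top}}(\pr(Z'))<n$. To upgrade this to the quantitative statement $\dim_{\mathrm{top}}(\pr(Z'))\leq\dim(Z')$ for every definable $Z'\subseteq X$, I plan to argue by induction on $k=\dim(Z')$, using a normal triangulation (\prettyref{fact:normal-triangulation}) of $Z'$ as a finite union of open cells $C_{i}$ of dimension at most $k$ together with their frontiers $\partial C_{i}$. Cells and frontiers of dimension $<k$ are handled by the inductive hypothesis, while for top-dimensional cells $C$ (open in $Z'$ but having empty interior in $X$ when $k<d$) Assumption~\ref{assumptionC} applies to $C$ viewed as a definable subset of $X$, giving $\pr(C)$ empty interior in $X/E$; the sum theorem for covering dimension then closes the induction. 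I expect the principal technical difficulty to lie in making this inductive step cleanly accommodate the case $k=d$, where one needs a relative/local version of compact domination on a tubular neighbourhood of $C$ in $X$ rather than compact domination applied to all of $X$ at once.
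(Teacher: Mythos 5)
Your proposal takes the wrong direction's proof strategy, and in doing so creates a gap the paper never has to face. The paper's own argument for $\dim(X)\geq\dim_{\R}(X/E)$ is direct and uses only \prettyref{assumptionB-1}, not \prettyref{assumptionC}: since $X$ is a definable $n$-manifold and the fiber $\pr^{-1}(y)$ is (by \prettyref{assumptionB-1}) a decreasing intersection of definable proper $n$-balls, one sandwiches a proper ball $B_{1}$ between $\pr^{-1}(y)$ and $\pr^{-1}(A)$, where $A$ is a small ball around $y$ in $X/E$. The definable homeomorphism $f:S^{n-1}\to\partial B_{1}$ is then not nullhomotopic in $B_{0}\setminus x$ (a punctured $n$-ball of $X$), hence \emph{a fortiori} not in the smaller space $\pr^{-1}(A\setminus y)$, and \prettyref{thm:B} turns this into $\pi_{n-1}(A\setminus y)\neq 0$, which forces $n\geq N$. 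The paper explicitly remarks, immediately after this proof, that compact domination has not yet been used; it is reserved for the opposite inequality, \prettyref{prop:dimX-leq-dimXE}.

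The structure you propose — pull a generator $\varphi$ of $\pi_{n-1}(\pr^{-1}(A\setminus y))^{\df}$, null-homotope it inside $\pr^{-1}(A)$, take the image $Z$ of the filling disk, and play dimension of $Z$ against topological compact domination — is essentially the paper's proof of the \emph{other} inequality, and in that setting the dimension count works out: there the filling has domain $S^{N-1}\times I$ of o-minimal dimension $N=\dim_{\R}(X/E)$, and the contradiction hypothesis $N<\dim(X)$ immediately gives that the image has empty interior in $X$, so \prettyref{assumptionC} applies. In your version the numerology is reversed: you assume $d=\dim(X)<n$, the filling disk has dimension $n$, and all you can conclude is $\dim(Z)\leq\min(n,d)=d=\dim(X)$. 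That bound does \emph{not} give $Z$ empty interior in $X$, so \prettyref{assumptionC} as stated simply does not apply to $Z$. You correctly notice this and propose to prove the stronger quantitative statement $\dim_{\mathrm{top}}(\pr(Z'))\leq\dim(Z')$ for all definable $Z'$, but this is an entirely new (and not obviously true) lemma; the inductive argument you sketch does not close the top-dimensional case, which you yourself flag as the hard step. Finally, the degree/winding-number step is also not on solid ground as stated: $\pr\circ\varphi$ is a map from a nonstandard sphere $S^{n-1}_{M}$ into $A\setminus y$, and "winding number" for such maps has to be mediated by the $\U$-approximation machinery of \prettyref{thm:A} (passing to $f^{*}$ and $\st$), which your sketch elides. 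The upshot is that your route requires strictly more than the hypotheses of the proposition and has an unfilled central gap; the paper's proof avoids all of this by exploiting the proper-ball structure of the fibers rather than compact domination.
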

\begin{proof}
Let $n=\dim(X)$ and $N=\dim_{\R}(X/E)$. Fix $x\in X$ and let $y=\pr(x)$.
Let $B_{0}$ be an open definable ball containing $\pr^{-1}(y)$.
Since $X/E$ is a manifold, there is a decreasing sequence of proper
balls $A_{i}\subseteq X/E$ such that $y=\bigcap_{i\in\N}A_{i}=\bigcap_{i\in\N}\ov A_{i}$.
Now $B_{0}\supseteq\pr^{-1}(y)=\bigcap_{i\in I}\pr^{-1}(\ov{A_{i}})$
and $\pr^{-1}(\ov{A_{i}})$ is type-definable (because $\ov A_{i}$
is closed), so there is some $i\in\N$ with $\pr^{-1}(\ov{A_{i}})\subseteq B_{0}$.
Let $A=A_{i}$ and observe that $\pr^{-1}(A)$ is $\bigvee$-definable
and contains the type definable set $\pr^{-1}(y)$. Since the latter
is a decreasiong intersection of definable proper balls, there is
some definable proper ball $B_{1}$ such that
\[
x\in\pr^{-1}(y)\subseteq B_{1}\subseteq\ov B_{1}\subseteq\pr^{-1}(A)\subseteq B_{0}.
\]
Now let $f:S^{n-1}\to\partial B_{1}=\ov B_{1}\setminus B_{1}$ be
a definable homeomorphism (whose existence follows by the hypothesis
that the ball is proper). By fixing base points, we can consider the
homotopy class $[f]$ as a non-zero element of $\pi_{n-1}^{\df}(B_{0}\setminus x)$
(namely $f$ is not definably homotopic to a constant in $B_{0}\setminus x$). 

\emph{A fortiori,} $0\neq[f]\in\pi_{n-1}^{\df}(\pr^{-1}(A)\setminus\pr^{-1}(y))$,
because if $f$ is homotopic to a constant within a smaller space,
it remains so in the larger space. Now observe that $\pr^{-1}(A)\setminus\pr^{-1}(y)=\pr^{-1}(A\setminus y)$
and by \prettyref{thm:B} we have $\pi_{n-1}^{\df}(\pr^{-1}(A\setminus y))\cong\pi_{n-1}(A\setminus y)$. 

We conclude that $\pi_{n-1}(A\setminus y)\neq0$, and since $A$ is
an open ball in the manifold $X/E$ this can happen only if $n\geq N$.
\end{proof}
So far we have not used the full strength of the assumption, namely
the topological compact domination. 
\begin{prop}
\label{prop:dimX-leq-dimXE}$\dim(X)\leq\dim_{\R}(X/E)$. 
\end{prop}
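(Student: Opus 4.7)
Suppose for contradiction that $n := \dim(X) > N := \dim_{\R}(X/E)$. The plan is to exhibit a definable subset $D \subseteq X$ with $\dim D \leq N < n$, so that $D$ has empty interior in $X$ while $\pr(D)$ has non-empty interior in $X/E$, directly contradicting \ref{assumptionC}. The set $D$ will be the image of a definable ``weak section'' of $\pr$ over a chart of $X/E$, obtained by inverting the bijection of \prettyref{thm:B}.

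Concretely, using the triangulability of $X/E$, pick a non-empty open subset $O \subseteq X/E$ identified via a chart with the realization $|Q|_{\R}$ of an open $N$-simplex $Q$ defined over $\Ralg$. By \prettyref{thm:B} the map $\pr_O^Q \colon [|Q|_M, \pr^{-1}(O)]^{\df} \to [|Q|_{\R}, O]$ is a bijection, so the class $[\mathrm{id}_O]$ lifts to the class of some definable continuous $f \colon |Q|_M \to \pr^{-1}(O)$ having $\mathrm{id}_O$ as a $\U$-approximation; in particular $\pr \circ f$ and $\st \colon |Q|_M \to |Q|_{\R}$ are $\U$-close, where $\U$ can be chosen as fine as desired, and inspecting the construction of \prettyref{lem:pr-surjective} we may in addition arrange that $\pr(f(v)) = v$ on every vertex $v$ of a sufficiently fine $\Ralg$-subdivision $P'$ of $Q$. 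Set $D := f(|Q|_M)$: since $f$ is definable with $\dim |Q|_M = N$, we get $\dim D \leq N < n$, so $D$ has empty interior in $X$, and by \ref{assumptionC} $\pr(D)$ has empty interior in $X/E$. This is the half of the contradiction to be opposed.

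The principal obstacle is showing that $\pr(D)$ in fact has non-empty interior. The set $\pr(D) \subseteq O$ is closed in $X/E$ by \prettyref{prop:image-definable-closed}, contains every vertex of $P'$, and is the image under $\pr \circ f$ of $|Q|_M$, a map homotopic to $\st$ by a close-homotopic argument in the spirit of \prettyref{prop:close_homotopic} and \prettyref{lem:close-def-homotopic}. Since $\st$ is topologically a ``degree-$1$'' surjection onto $O$, a Brouwer-type degree argument applied simplex by simplex on $P'$ should force $\pr \circ f$ to hit the barycenter of every top-dimensional simplex $\bar{\sigma}$ of $P'$: on each such $\bar{\sigma}$, the boundary map $\pr \circ f|_{\partial \bar\sigma}$ has degree $\pm 1$ onto a small sphere around the barycenter of $\sigma$, so the extension cannot factor through the punctured $O$. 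This produces a dense subset of $O$ inside the closed set $\pr(D) \cap O$, forcing $\pr(D) \supseteq O$ and thus non-empty interior, which is the desired contradiction. The technical heart of the argument is this local degree calculation, whose rigorous execution must handle the non-compactness of $|Q|_M$ and the fact that the lift $f$ is only determined up to $\U$-approximation.
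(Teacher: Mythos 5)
Your high-level strategy coincides with the paper's: assume $n := \dim(X) > N := \dim_{\R}(X/E)$ and use topological compact domination to derive a contradiction by producing a definable subset of $X$ of dimension $\leq N$ whose image under $\pr$ should have non-empty interior. However, the degree-theoretic execution has genuine gaps that the paper's argument carefully sidesteps.

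The first gap is the claim that the degree argument ``produces a dense subset of $O$ inside the closed set $\pr(D)\cap O$''. Hitting the barycenter of each top-dimensional simplex of a \emph{fixed} subdivision $P'$ only yields a finite set, which is nowhere dense. To produce density you would need to let the subdivision become arbitrarily fine, but $f$ and hence the degree of $\pr\circ f$ on the boundary spheres were constructed from a fixed cover $\U$, and there is no reason for $\pr\circ f$ to remain close to $\st$ at scales finer than $\U$. The fix is to show that the boundary degree is $\pm 1$ around \emph{every} point $y$ of a fixed open subset of one simplex (degree is locally constant in $y$ off the image of the boundary restriction), rather than only at barycenters; this is not what you wrote, and the ``dense'' formulation is incorrect as stated. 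A second, smaller, issue is that \prettyref{thm:A} and \prettyref{thm:B} are proved for \emph{closed} simplicial complexes $P$, so the domain of the lift cannot be an open simplex $|Q|$; you would need to work with the closed simplex $\ov Q$ and a slightly larger open set $O$, which also affects how you identify $O$ with $|Q|_{\R}$.

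The paper's proof avoids the degree calculation entirely. It picks concentric balls $A_1 \subseteq A_0 \subseteq X/E$, observes via \prettyref{thm:B} that $\pi_{N-1}^{\df}(\pr^{-1}(A_0\setminus\ov A_1))\neq 0$ and that $\pi_{N-1}^{\df}(\pr^{-1}(A_0))=0$, so a non-trivial class $[f]$ bounds a definable null-homotopy $F:S^{N-1}\times I\to\pr^{-1}(A_0)$. The image of $F$ is definable of dimension $\leq N<n$, hence has empty interior, so compact domination produces a point $y\in A_1$ not in $\pr(F(D))$. Then $F$ witnesses that $f$ is null-homotopic in $\pr^{-1}(A_0\setminus y)$, contradicting non-triviality (using the deformation retraction $A_0\setminus y\simeq A_0\setminus\ov A_1$ and the functoriality part of \prettyref{thm:B}). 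This uses only the isomorphisms already established and no local degree computation, and in particular does not require showing that any lift has surjective image onto a neighborhood. I would encourage you to rework your argument along these lines, or, if you want to keep the degree-theoretic flavor, to carefully state and prove that $\pr\circ f$ has degree $\pm 1$ around a fixed open set of points, and address the non-compactness and open/closed complex issues you flagged.
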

\begin{proof}
As before, let $n=\dim(X)$ and $N=\dim_{\R}(X/E)$. Let $A_{0}\subseteq X/E$
be an open $N$-ball, namely a set homeomorphic to $\{|x|\in\R^{N}\suchthat|x|<1\}$.
Let $A_{1}\subseteq A_{0}$ be the image of $\{|x|\in\R^{N}\suchthat|x|<1/2\}$
under the homeomorphism and note that $0\neq\pi_{N-1}(A_{0}\setminus\ov{A_{1}})$
and $A_{0}\setminus\ov A_{1}$ is a deformation retract of $A_{0}\setminus\{y\}$
for every $y\in A_{1}$. 

By \prettyref{thm:B}, we have $0\neq\pi_{N-1}(\pr^{-1}(A_{0}\setminus\ov{A_{1}}))^{\df}$,
so there is a map $f:S^{N-1}\to\pr^{-1}(A_{0}\setminus\ov{A_{1}})$
of pointed spaces with $0\neq[f]\in\pi_{N-1}(\pr^{-1}(A_{0}\setminus\ov{A_{1}}))^{\df}$. 

Since $A_{0}$ is a ball, we have $\pi_{N-1}(A_{0})=0$ and, by \prettyref{thm:B},
$\pi_{N-1}^{\df}(\pr^{-1}(A_{0}))=0$ as well. In particular $[f]=0$
when seen as an element of $\pi_{N-1}(\pr^{-1}(A_{0}))^{\df}$. This
is equivalent to say that $f$ can be extended to a definable map
$F:D\to\pr^{-1}(A_{0})$, where $D=S^{N-1}\times I$ and $F$ is a
definable homotopy (relative to the base point) between $f$ and a
constant map. 

Notice that $\dim(F(D))\leq\dim(D)=N$. Now assume for a contradiction
that $N<\dim(X)$. Then $\dim(F(D))<\dim(X)$, and therefore $F(D)$
has empty interior in $X$. By topological compact domination $(\pr\circ F)(D)$
has empty interior in $X/E$, so in particular there is some $y\in A_{1}$
such that $y\nin(\pr\circ F)(D)$.

It follows that the image of $F$ is disjoint from $\pr^{-1}(y)$,
namely $F$ takes values in $\pr^{-1}(A_{0})\setminus\pr^{-1}(y)=\pr^{-1}(A_{0}\setminus y)$
and witnesses the fact that $f$ is null-homotopic when seen as a
map into $\pr^{-1}(A_{0}\setminus y)$. 

We can now reach a contradiction as follows. Since $A_{0}\setminus\ov A_{1}$
is a deformation retract of $A_{0}\setminus\{y\}$, the inclusion
induces an isomorphism $\pi_{N-1}(A_{0}\setminus\ov A_{1})\cong\pi_{N-1}(A_{0}\setminus y)$.
By the functoriality part in \prettyref{thm:B}, there is an induced
isomorphism $\pi_{N-1}^{\df}(\pr^{-1}(A_{0}\setminus\ov A_{1}))\cong\pi_{N-1}^{\df}(\pr^{-1}(A_{0}\setminus y))$.
Moreover, this isomorphism sends the homotopy class of $f$ to the
homotopy class of $f$ itself, but seen as a map with a different
codomain. This is absurd since $f$ was not null-homotopic as a map
to $\pr^{-1}(A_{0}\setminus\ov A_{1})$, while we have shown that
it is null-homotopic as a map to $\pr^{-1}(A_{0}\setminus y)$.
\end{proof}
As a corollary we obtain. 
\begin{cor}[\cite{Edmundo2004}]
Let $G$ be an abelian definably compact and definably connected
group of dimension $n$. Then $\pi_{1}^{\df}(G)\cong\Z^{n}$ and $G[k]\cong(\Z/k\Z)^{n}$,
where $G[k]$ is the $k$-torsion subgroup. 
\end{cor}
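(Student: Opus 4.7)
My plan is to combine \prettyref{thm:B} and \prettyref{thm:C} with the classification of compact connected abelian Lie groups, exactly as advertised in the introduction.

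First, I would verify that the projection $\pr\colon G\to G/G^{00}$ satisfies \prettyref{assumptionB-1} and \prettyref{assumptionC}. The first follows from the material of \prettyref{sec:Type-definable-groups}: $G$ is a definable manifold in its t-topology, $G/G^{00}$ is a compact real Lie group by \cite{Berarducci2005}, hence a triangulable manifold, and each fiber of $\pr$ is a translate of $G^{00}$, which by \prettyref{cor:G00-balls} is a decreasing intersection of definable proper balls of dimension $n$. The second assumption (topological compact domination) is immediate from \prettyref{fact:compact-domination}, since a subset of Haar measure zero in a compact Lie group has empty interior.

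Second, I would identify $G/G^{00}$. Because $G$ is abelian and definably connected, $G/G^{00}$ is a connected compact abelian real Lie group. By \prettyref{thm:C} its real dimension equals $\dim(G)=n$, so the structure theorem for such groups forces $G/G^{00}\cong\T^{n}$, the standard real $n$-torus. Consequently $\pi_{1}(G/G^{00})\cong\Z^{n}$ and $(G/G^{00})[k]\cong(\Z/k\Z)^{n}$.

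Third, I would transfer these computations back to $G$. Applying \prettyref{thm:B} with $n=1$ and $O=G/G^{00}$ gives at once $\pi_{1}^{\df}(G)\cong\pi_{1}(G/G^{00})\cong\Z^{n}$. For the torsion statement, it suffices to show that the restriction of $\pr$ to $G[k]$ is an isomorphism onto $(G/G^{00})[k]$. This reduces to the two standard facts that $G^{00}$ is torsion-free (yielding injectivity) and divisible (yielding surjectivity, after correcting any preimage of a $k$-th root by a $k$-th root chosen inside $G^{00}$). Both are well-known properties of the infinitesimal subgroup in the abelian definably compact case, and can be derived from the minimality of $G^{00}$ among type-definable subgroups of bounded index together with the definable connectedness of $G$.

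The principal subtlety is to avoid circularity: the original argument in \cite{Edmundo2004} and various proofs of Pillay's conjecture rely on torsion counting, so one must ensure that the group-theoretic facts invoked about $G^{00}$ (torsion-freeness and divisibility) are established independently of what is being proved here. Beyond that, the deduction is essentially a mechanical application of \prettyref{thm:B} and \prettyref{thm:C}, with the substantive work already carried out in their proofs.
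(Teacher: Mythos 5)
Your proposal matches the paper's proof: identify $G/G^{00}$ as an $n$-torus via \prettyref{thm:C} and the classification of compact connected abelian Lie groups, transfer $\pi_{1}$ via \prettyref{thm:B}, and handle torsion by observing that $\pr$ restricts to an isomorphism $G[k]\to(G/G^{00})[k]$ using divisibility and torsion-freeness of $G^{00}$ (the paper's second of two offered routes). The only slight imprecision is your suggestion that torsion-freeness of $G^{00}$ follows from minimality and definable connectedness — the paper instead cites it as a nontrivial result from \cite{Hrushovski2007a}, and divisibility from \cite{Berarducci2005}; otherwise your circularity concern is exactly the point the paper is implicitly addressing by offering the alternative torsion argument.
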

\begin{proof}
By \cite{Berarducci2005}, $G/G^{00}$ is a compact abelian connected
Lie group and by the previous result its dimension is $n$. It follows
that $G/G^{00}$ is isomorphic to an $n$-dimensional torus, so $\pi_{1}(G/G^{00})\cong\Z^{n}$
and, by \prettyref{thm:B}, $\pi_{1}^{\df}(G)\cong\Z^{n}$ as well. 

To determine the $k$-torsion two approaches are possible. The first
is to argue as in \cite{Edmundo2004}, namely to observe that $G[k]\cong\pi_{1}^{\df}(G)/k\pi_{1}^{\df}(G)$
and $\pi_{1}^{\df}(G)\cong\Z^{n}$. Alternatively we can use the fact
that $G^{00}$ is divisible \cite{Berarducci2005} and torsion free
\cite{Hrushovski2007a}, so $G$ and $G/G^{00}$ have isomorphic torsion
subgroups. Since $G/G^{00}$ is a torus of dimension $n$, its torsion
is known and we obtain the desired result.
\end{proof}
Notice that in \cite{Edmundo2004} both the isomorphism $\pi_{1}^{\df}(G)\cong\Z^{n}$
and the determination of the $k$-torsion of $G$ is proved directly
without using $G/G^{00}$, while our argument is a reduction to the
case of the classical tori. 

\bibliographystyle{alpha}

\begin{thebibliography}{BOPP05}

\bibitem[Bar10]{Baro2010a}
El{\'{i}}as Baro.
\newblock {Normal triangulations in o-minimal structures}.
\newblock {\em Journal of Symbolic Logic}, 75(1):275--288, 2010.

\bibitem[Ber07]{Berarducci2007b}
Alessandro Berarducci.
\newblock {O-minimal spectra, infinitesimal subgroups and cohomology}.
\newblock {\em Journal of Symbolic Logic}, 72(4):1177--1193, dec 2007.

\bibitem[Ber09]{Berarducci2009a}
Alessandro Berarducci.
\newblock {Cohomology of groups in o-minimal structures: acyclicity of the
  infinitesimal subgroup}.
\newblock {\em Journal of Symbolic Logic}, 74(3):891--900, sep 2009.

\bibitem[BF09]{Berarducci2009c}
Alessandro Berarducci and Antongiulio Fornasiero.
\newblock {o-Minimal Cohomology: Finiteness and Invariance Results}.
\newblock {\em Journal of Mathematical Logic}, 09(02):167--182, dec 2009.

\bibitem[BM11]{Berarducci2011}
Alessandro Berarducci and Marcello Mamino.
\newblock {On the homotopy type of definable groups in an o-minimal structure}.
\newblock {\em Journal of the London Mathematical Society}, 83(3):563--586,
  2011.

\bibitem[BMO10]{Berarducci2010d}
Alessandro Berarducci, Marcello Mamino, and Margarita Otero.
\newblock {Higher homotopy of groups definable in o-minimal structures}.
\newblock {\em Israel Journal of Mathematics}, 180(1):143--161, oct 2010.

\bibitem[BO02]{Berarducci2002c}
Alessandro Berarducci and Margarita Otero.
\newblock {o-Minimal Fundamental Group, Homology and Manifolds}.
\newblock {\em Journal of the London Mathematical Society}, 65(2):257--270, apr
  2002.

\bibitem[BO04]{Berarducci2004b}
Alessandro Berarducci and Margarita Otero.
\newblock {An additive measure in o-minimal expansions of fields}.
\newblock {\em The Quarterly Journal of Mathematics}, 55(4):411--419, dec 2004.

\bibitem[BO09]{Baro2009a}
El{\'{i}}as Baro and Margarita Otero.
\newblock {On O-Minimal Homotopy Groups}.
\newblock {\em The Quarterly Journal of Mathematics}, 61(3):275--289, mar 2009.

\bibitem[BO10]{Baro2010}
El{\'{i}}as Baro and Margarita Otero.
\newblock {Locally definable homotopy}.
\newblock {\em Annals of Pure and Applied Logic}, 161(4):488--503, jan 2010.

\bibitem[BOPP05]{Berarducci2005}
Alessandro Berarducci, Margarita Otero, Ya'acov Peterzil, and Anand Pillay.
\newblock {A descending chain condition for groups definable in o-minimal
  structures}.
\newblock {\em Annals of Pure and Applied Logic}, 134(2-3):303--313, jul 2005.

\bibitem[DK85]{Delfs1985}
Hans Delfs and M~Knebusch.
\newblock {\em {Locally Semialgebraic Spaces}}.
\newblock Springer, Berlin, 1985.

\bibitem[Dug69]{Dugundji}
James Dugundji.
\newblock {Modified Vietoris theorems for homotopy}.
\newblock {\em Fundamenta Mathematicae}, 66:223--235, 1969.

\bibitem[EO04]{Edmundo2004}
M{\'{a}}rio~J. Edmundo and Margarita Otero.
\newblock {Definably compact abelian groups}.
\newblock {\em Journal of Mathematical Logic}, 4(2):163--180, 2004.

\bibitem[Hat02]{Hatcher2002}
Allen Hatcher.
\newblock {\em {Algebraic topology}}.
\newblock Cambridge University Press, 2002.

\bibitem[HP11]{Hrushovski2011a}
Ehud Hrushovski and Anand Pillay.
\newblock {On NIP and invariant measures}.
\newblock {\em Journal of the European Mathematical Society}, 13:1005--1061,
  2011.

\bibitem[HPP08]{Hrushovski2007a}
Ehud Hrushovski, Ya'acov Peterzil, and Anand Pillay.
\newblock {Groups, measures, and the NIP}.
\newblock {\em Journal of the American Mathematical Society}, 21(02):563--596,
  feb 2008.

\bibitem[Muk15]{Mukherjee2015}
A~Mukherjee.
\newblock {\em {Differential Topology}}.
\newblock Springer International Publishing, 2015.

\bibitem[Mun84]{Munkres1984}
Munkres.
\newblock {\em {Elements of algebraic topology}}.
\newblock 1984.

\bibitem[Pil88]{Pillay1988}
Anand Pillay.
\newblock {On groups and fields definable in o-minimal structures}.
\newblock {\em Journal of Pure and Applied Algebra}, 53(3):239--255, 1988.

\bibitem[Pil04]{Pillay2004a}
Anand Pillay.
\newblock {Type-definability, compact lie groups, and o-Minimality}.
\newblock {\em Journal of Mathematical Logic}, 4(2):147--162, 2004.

\bibitem[PP07]{Peterzil2007b}
Ya'acov Peterzil and Anand Pillay.
\newblock {Generic sets in definably compact groups}.
\newblock {\em Fundamenta Mathematicae}, 193(2):153--170, 2007.

\bibitem[PS99]{Peterzil1999}
Ya'acov Peterzil and Charles Steinhorn.
\newblock {Definable Compactness and Definable Subgroups of o-Minimal Groups}.
\newblock {\em Journal of the London Mathematical Society}, 59(3):769--786, jun
  1999.

\bibitem[She08]{Shelah2008}
Saharon Shelah.
\newblock {Minimal bounded index subgroup for dependent theories}.
\newblock {\em Proceedings of the American Mathematical Society}, 136(3):1087,
  2008.

\bibitem[Sim13]{Simon2013}
Pierre Simon.
\newblock {Distal and non-distal NIP theories}.
\newblock {\em Annals of Pure and Applied Logic}, 164(3):294--318, 2013.

\bibitem[Sim14]{Simon2014a}
Pierre Simon.
\newblock {Finding generically stable measures}.
\newblock {\em The Journal of Symbolic Logic}, 77(01):263--278, mar 2014.

\bibitem[Sim15]{Simon2015}
Pierre Simon.
\newblock {\em {A Guide to NIP theories}}.
\newblock Lecture notes in Logic. Cambridge University Press, 2015.

\bibitem[Sma57]{Smale1957}
Stephen Smale.
\newblock {A Vietoris mapping theorem for homotopy}.
\newblock {\em Proceedings of the American Mathematical Society},
  8(3):604--604, 1957.

\bibitem[TZ12]{Tent2012}
Katrin Tent and Martin Ziegler.
\newblock {\em {A Course in Model Theory}}.
\newblock Cambridge University Press, 2012.

\bibitem[vdD98]{vdDries1998}
Lou van~den Dries.
\newblock {\em {Tame topology and o-minimal structures}}, volume 248.
\newblock LMS Lecture Notes Series, Cambridge Univ. Press, 1998.

\end{thebibliography}

\end{document}